\documentclass[a4paper,11pt]{amsart}

\usepackage{amsfonts}
\usepackage{amssymb}
\usepackage[utf8]{inputenc}
\usepackage{amsmath}
\usepackage{pdflscape}
\usepackage{graphicx}
\usepackage{comment}
\usepackage{float}
\usepackage{subcaption}
\usepackage{xcolor}

\usepackage[colorlinks=true, linkcolor=blue, citecolor=red]{hyperref}
\usepackage[]{epsfig}
\usepackage[]{pstricks}
\usepackage{tikz}

\numberwithin{equation}{section}

\newtheorem{theorem}{Theorem}[section]
\newtheorem{proposition}[theorem]{Proposition}
\newtheorem{lemma}[theorem]{Lemma}
\newtheorem{corollary}[theorem]{Corollary}

\newtheorem{remark}{Remark}
\theoremstyle{remark}

\newtheorem{definition}{Definition}[section]

\newcommand{	\R}{\mathbb{R}}

\newcommand{\C}{\mathbb{C}}
\newcommand{\N}{\mathbb{N}}

\newcommand{\supp}{\textrm{supp}}
\newcommand{\im}{\operatorname{Im}}
\newcommand{\re}{\operatorname{Re}}

\newcommand{\bphi}{{\boldsymbol{\varphi}}}

\def\bra#1{\langle#1\rangle}

\numberwithin{equation}{section}
\makeatletter
\@namedef{subjclassname@2010}{\textup{2020} Mathematics Subject Classification}
\makeatother

\begin{document}
\title[Higher Order KdV]{Higher order KdV-type equations with Robin boundary conditions on $\R^+$}
\author[Gallego]{Fernando. A. Gallego}
\address{Departamento de Matem\'atica y Estadística, Universidad Nacional de Colombia (UNAL), Cra 27 No. 64-60, 170003, Manizales, Colombia}
\email{fagallegor@unal.edu.co}
\author[Kwak]{Chulkwang. Kwak}
\address{Department of Mathematics, Ewha Womans University, Seoul 03760, Republic of Korea}
\address{Korea Institute for Advanced Study, Seoul 02455, Republic of Korea}
\email{ckkwak@ewha.ac.kr}
\subjclass[2020]{Primary 35Q53, 35G31; Secondary 35G16, 35A01}
\keywords{Higher order KdV equation, Robin boundary conditions, Unified Transform Method, Lagrange interpolation, Low regularity well posedness}

\begin{abstract}
We study the initial boundary value problem (IBVP) for the higher order Korteweg--de Vries type equation
\[\partial_tu+(-1)^{j+1}\partial_x^{2j+1}u+\frac12\partial_x(u^2)=0,\qquad j\in\N,\]
on the right half line, subject to the Robin boundary conditions
\[(\partial_x+\gamma)\partial_x^{\ell-1}u(t,0)=\varphi_\ell(t),\qquad 1\le\ell\le j,\]
where $\gamma\in\R$ is common to all boundary conditions. We prove local well posedness for
\[u_0\in H^s(\R^+),\qquad \varphi_\ell\in H^{\frac{s+j-\ell}{2j+1}}(0,T),\qquad -j+\frac14<s<\frac32.\]
In particular, the regularity range extends the low regularity theory for the higher order Dirichlet IBVP to Robin boundary conditions.

The main ingredient is an explicit unified transform representation for the higher order IBVP with Robin boundary conditions. The Robin hierarchy couples adjacent boundary traces and leads, in each spectral sector, to a nontrivial system for the unknown boundary transforms. Rather than computing the inverse of this system, we resolve precisely the linear combination required by the unified transform representation through Lagrange interpolation at the rotated spectral points. This yields an exact factorization in which the dependence on the Robin parameter is concentrated in the single factor $(k-i\gamma)^{-1}$. Consequently, the only possible Robin pole is $k=i\gamma$, which contributes the residue mode
\[e^{-\gamma x+\gamma^{2j+1}t}\]
exactly when $j$ is odd and $\gamma>0$. Combining this representation with linear estimates in modified Fourier restriction spaces and higher order KdV bilinear estimates yields the low regularity well posedness result. The representation also recovers the classical KdV formulas with Robin and Neumann boundary data when $j=1$ and is formally consistent with the higher order Dirichlet representation under the reciprocal Robin limit.
\end{abstract}

\maketitle

\tableofcontents

\section{Introduction}\label{sec:introduction}

\subsection{KdV and higher dispersion models}

The Korteweg--de Vries equation is one of the fundamental models in the theory of nonlinear dispersive waves. Its origin goes back to Russell's observation of a solitary water wave propagating along a canal without an immediate change of form \cite{Russell}. Boussinesq subsequently developed a mathematical theory for long waves in shallow water and identified the balance between weak nonlinearity and weak dispersion that produces solitary waves \cite{Boussinesq}. Korteweg and de Vries later derived the equation now bearing their names as a model for the unidirectional propagation of long, weakly nonlinear surface waves in a shallow channel \cite{KdV}. After a suitable normalization, the equation takes the form
\[\partial_tu+\partial_x^3u+u\partial_xu=0.\]
The nonlinear transport term tends to steepen the wave profile, while the third order dispersive term spreads different spatial frequencies at different speeds. The balance between these two effects is responsible for the existence of solitary waves.

The KdV equation also occupies a central position in the theory of completely integrable systems. Gardner, Greene, Kruskal, and Miura introduced the inverse scattering method for the KdV initial value problem \cite{GGKM}, while Lax formulated its integrability through an isospectral deformation of a Schr\"odinger operator \cite{Lax}. The equation admits a Hamiltonian formulation and an infinite hierarchy of conservation laws \cite{ZF}.

A natural higher dispersion extension of the KdV equation is
\begin{equation}\label{eq:higher-order-kdv}
\partial_tu+(-1)^{j+1}\partial_x^{2j+1}u+\frac12\partial_x(u^2)=0,\qquad j\in\N.
\end{equation}
The case $j=1$ is the classical KdV equation, while $j=2$ corresponds to the Kawahara equation.\footnote{In the literature, the term Kawahara equation is used for closely related fifth order KdV type models, sometimes including an additional third order dispersive term.} The fifth order dispersive model was introduced in connection with regimes in which the leading third order dispersion becomes weak or degenerate \cite{Kawahara1972}. More generally, \eqref{eq:higher-order-kdv} describes the interaction between the quadratic KdV nonlinearity and odd order dispersion. The Cauchy problem for higher order KdV and Kawahara type equations has been studied extensively on both the real line and the periodic domain. On the real line, see for instance \cite{FigueiraHimonasYan2020,KatoKawahara2011,KatoKawahara2013}, while periodic results include \cite{GorskyHimonas2009,Hirayama2012,HongKwak2016, Kato2012}. Related control and stabilization problems for higher order KdV type equations have also been investigated in \cite{CapistranoGallegoKomornik2025, CapistranoKwakVielma2022} and the references therein.

For $j\ge2$, equation \eqref{eq:higher-order-kdv} is not, in general, a completely integrable flow. In particular, it should be distinguished from the higher flows of the integrable KdV hierarchy, which contain additional nonlinear terms. Nevertheless, \eqref{eq:higher-order-kdv} retains the Hamiltonian structure of the classical equation. Indeed, on the real line, define
\[\mathcal E_j(u):=\frac12\int_{\R}|\partial_x^ju(x)|^2\,dx-\frac16\int_{\R}u(x)^3\,dx.\]
A direct variational calculation gives
\[\frac{\delta\mathcal E_j}{\delta u}=(-1)^j\partial_x^{2j}u-\frac12u^2,\]
and hence \eqref{eq:higher-order-kdv} can formally be written as
\[\partial_tu=\partial_x\frac{\delta\mathcal E_j}{\delta u}.\]
Thus $\mathcal E_j$ is formally conserved for sufficiently smooth decaying solutions on the real line, and the analogous statement holds on the periodic domain. On a domain with boundary, however, integration by parts produces boundary fluxes, and the evolution cannot be specified without an appropriate collection of boundary conditions.

\subsection{The IBVP on the half line with Robin boundary conditions and the UTM formulation}

We study \eqref{eq:higher-order-kdv} on the right half line
\[\R^+:=(0,\infty).\]
We consider the following IBVP with Robin boundary conditions
\begin{equation}\label{mainequation}
\left\{
\begin{aligned}
\partial_tu+(-1)^{j+1}\partial_x^{2j+1}u+\frac12\partial_x(u^2)=&~{}0,
&&t>0,\ x>0,\\
u(0,x)=&~{}u_0(x),
&&x>0,\\
(\partial_x+\gamma_\ell)\partial_x^{\ell-1}u(t,0)=&~{}\varphi_\ell(t),
&&t>0,\ 1\le\ell\le j.
\end{aligned}
\right.
\end{equation}
Here
\[\boldsymbol{\gamma}:=(\gamma_1,\ldots,\gamma_j)\in\R^j,\qquad \bphi:=(\varphi_1,\ldots,\varphi_j).\]
The sign of the dispersive term in \eqref{mainequation} is chosen so that the corresponding IBVP on the right half line requires $j$ boundary conditions. The $\ell$th condition couples the adjacent traces $\partial_x^{\ell-1}u(t,0)$ and $\partial_x^\ell u(t,0)$, while the parameters $\gamma_\ell$ allow the relation between these two traces to vary from one boundary condition to another. More generally, Robin conditions are mixed boundary conditions coupling a function and its normal derivative and are commonly used to describe an intermediate boundary response between purely Dirichlet and purely Neumann conditions. In the present higher order setting, this coupling is imposed successively on adjacent boundary traces.

Although \eqref{mainequation} is naturally formulated for distinct parameters $\gamma_1,\ldots,\gamma_j$, the explicit construction in this paper concerns the diagonal Robin hierarchy
\begin{equation}\label{eq:diagonal-robin}
\gamma_1=\cdots=\gamma_j=\gamma,\qquad \gamma\in\R.
\end{equation}
In this case, the boundary conditions become
\[(\partial_x+\gamma)\partial_x^{\ell-1}u(t,0)=\varphi_\ell(t),\qquad 1\le\ell\le j.\]

IBVPs for KdV type equations on the half line have been studied by several different approaches. For the classical KdV equation, the Dirichlet IBVP was treated by Fokas, Himonas, and Mantzavinos \cite{fokas2016} using the unified transform method. Himonas, Madrid, and Yan \cite{Himonas2021} subsequently studied the Robin and Neumann IBVPs in Sobolev spaces using the same approach. The corresponding low regularity theory in modified Bourgain spaces was established by Himonas and Yan \cite{Himonas2022-1}, who proved local well posedness for $s>-\frac34$ with boundary data in $H^{s/3}(0,T)$.

For higher order dispersion, Yan \cite{yan2020} studied IBVPs with Dirichlet type boundary conditions in Sobolev spaces using the unified transform method. Himonas and Yan \cite{Himonas2022} later developed the corresponding low regularity theory for the boundary hierarchy
\[\partial_x^{\ell-1}u(t,0)=h_\ell(t),\qquad 1\le\ell\le j,\]
and obtained local well posedness for
\[s>-j+\frac14.\]
Their approach combines the unified transform representation with modified Fourier restriction spaces and bilinear estimates adapted to the higher order dispersive equation.

A different approach to KdV type IBVPs on the half line is based on boundary forcing operators. This method originates in the work of Colliander and Kenig \cite{CollianderKenig2002} and was further developed for KdV by Holmer \cite{holmer2006}. For fifth order KdV type equations, Cavalcante and Kwak \cite{CavalcanteKwak2020NoDEA,CavalcanteKwak2019CPAA} established low regularity results within this framework. Earlier IBVP results for Kawahara type equations were also obtained by Kuvshinov and Faminskii \cite{KuvshinovFaminskii2009} on a half strip and by Doronin and Larkin \cite{DoroninLarkin2008} on a bounded interval. Another approach, due to Bona, Sun, and Zhang \cite{BSZ2002,BSZ2006}, treats the nonhomogeneous KdV problem in the quarter plane through the Laplace transform representation of the boundary integral operator combined with Bourgain space estimates. The present problem lies at the intersection of two of these directions: higher order dispersion and Robin boundary coupling at low regularity. To the best of our knowledge, the higher order Robin hierarchy considered here has not previously been treated in this regime.

The temporal regularity of the boundary traces is determined by the linear dispersive relation. For $0\le \ell \le j$, set
\begin{equation}\label{eq:intro-r-ell}
r_\ell=r_\ell(s):=\frac{s+j-\ell}{2j+1}.
\end{equation}
We define the local boundary data space by
\[\mathcal H_{\mathrm{loc}}^s:=\prod_{\ell=1}^jH_{\mathrm{loc}}^{r_\ell(s)}(\R).\]
For $T>0$, we also write
\[\mathcal H_T^s:=\prod_{\ell=1}^jH^{r_\ell(s)}(0,T),\qquad \|\bphi\|_{\mathcal H_T^s}^2:=\sum_{\ell=1}^j\|\varphi_\ell\|_{H^{r_\ell(s)}(0,T)}^2.\]
Thus boundary data are given on the full time line, while only their restriction to $(0,T)$ enters the local IBVP. The exponent $r_\ell$ corresponds to the temporal regularity of the trace $\partial_x^\ell u(t,0)$, which is the less regular of the two adjacent traces appearing in the $\ell$th Robin condition.

Our approach is based on the Fokas unified transform method \cite{Fokas2008}. For the forced linear IBVP associated with \eqref{mainequation} and \eqref{eq:diagonal-robin}, we denote the UTM solution operator by
\[S_j[u_0,\bphi,f].\]
Its explicit contour representation is derived in Section \ref{sec:UTM}. For sufficiently smooth and decaying data, we derive the formula from the differential problem and then verify that the function defined by this formula satisfies the differential equation, the initial condition, and all $j$ Robin boundary conditions. Thus the differential and UTM formulations are equivalent in the smooth setting. The resulting operator is subsequently extended to the low regularity spaces used in the nonlinear problem.

\begin{definition}[UTM solution and local well posedness]\label{def:utm-well-posedness}
Let $T>0$ and set
\[\Omega_T:=(0,T)\times\R^+.\]
Assume \eqref{eq:diagonal-robin}. Let $X_T$ be a solution space continuously embedded into $C([0,T];H^s(\R^+))$. A function
\[u\in X_T\]
is called a UTM solution of \eqref{mainequation} on $\Omega_T$ if
\[u=S_j\left[u_0,\bphi,-\frac12\partial_x(u^2)\right]\]
in $X_T$, where the boundary data are restricted to $(0,T)$ and $S_j$ denotes the continuous extension of the smooth UTM solution operator.

We say that \eqref{mainequation} is locally well posed in the UTM sense in
\[H^s(\R^+)\times\mathcal H_{\mathrm{loc}}^s\]
if the following properties hold.
\begin{enumerate}
\item \textit{Existence.} For every
\[u_0\in H^s(\R^+),\qquad \bphi\in\mathcal H_{\mathrm{loc}}^s,\]
there exist $T>0$ and a solution space $X_T\subset C([0,T];H^s(\R^+))$ such that \eqref{mainequation} admits a UTM solution in $X_T$ on $\Omega_T$.
\item \textit{Uniqueness.} The UTM solution is unique in $X_T$.
\item \textit{Continuous dependence.} The lifespan $T$ can be chosen uniformly in a neighborhood of each initial and boundary data set, and the corresponding data to solution map is continuous into $X_T$.
\end{enumerate}
\end{definition}

Our main result is the following.

\begin{theorem}\label{mainresult}
Let $j\in\N$ and $\gamma\in\R$. Assume that
\[-j+\frac14<s<\frac32\]
and that \eqref{eq:diagonal-robin} holds. Then there exist $b$ and $\alpha$ satisfying
\[0<b<\frac12,\qquad \frac12<\alpha<1,\qquad \alpha\le1+\frac{s}{2j+1},\]
such that \eqref{mainequation} is locally well posed in the UTM sense in
\[H^s(\R^+)\times\mathcal H_{\mathrm{loc}}^s.\]
The solution space may be taken to be
\[X_{\Omega_T}^{s,b,\alpha}\cap C([0,T];H^s(\R^+)),\]
where $X_{\Omega_T}^{s,b,\alpha}$ is defined in Section \ref{sec:prelim}.
\end{theorem}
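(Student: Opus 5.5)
The proof is a contraction argument for the map
\[\Phi(u):=S_j\Big[u_0,\bphi,-\tfrac12\partial_x(u^2)\Big]\]
in a ball of $X^{s,b,\alpha}_{\Omega_T}\cap C([0,T];H^s(\R^+))$. It follows the scheme of Himonas--Yan for the Dirichlet hierarchy, with the Robin-specific input isolated in the linear estimates.

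\textbf{Step 1: the linear representation.} I would first use the explicit UTM formula of Section \ref{sec:UTM} and split it as
\[S_j=\text{(Cauchy part)}+\text{(boundary contour integrals)}+\text{(residue term)}.\]
The Cauchy part is the whole-line flow of an extension $U_0\in H^s(\R)$ of $u_0$, together with the Duhamel term for an extension of the forcing. The boundary contour integrals are taken over $\partial D^+$, where $D^+$ is the union of the sectors in which $\re(i(-1)^{j+1}k^{2j+1})$ has the appropriate sign. By the Lagrange interpolation factorization, the only Robin dependence in the contour integrands is the factor $(k-i\gamma)^{-1}$. The residue term is the single mode $e^{-\gamma x+\gamma^{2j+1}t}$, present only when $j$ is odd and $\gamma>0$. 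Its coefficient is a combination of the time transforms of $\varphi_\ell$ and of the data evaluated at $k=i\gamma$.

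\textbf{Step 2: linear estimates.} For the Cauchy and forcing parts I would invoke the standard estimates in modified Bourgain spaces, with a cutoff $\psi(t)$ and the factor $T^\epsilon$. These give, for $b<\frac12$ and $\alpha>\frac12$, the bound $\|\cdot\|_{X^{s,b,\alpha}}\lesssim \|u_0\|_{H^s}+\|f\|_{X^{s,-b',\alpha-1}}$. The $\alpha$-component controls the time-trace regularity $r_0$.

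For the boundary integrals, on each ray of $\partial D^+$ I would parametrize $k$ so that the oscillatory factor becomes $e^{i\tau t}$ with $\tau=\pm k^{2j+1}$, while the $x$-dependence is decaying, $e^{ik x}$ with $\im k>0$. The factor $(k-i\gamma)^{-1}$ is smooth and bounded by $\langle k\rangle^{-1}$ away from $k=i\gamma$. Away from the pole one therefore gains one derivative, which exactly compensates the mismatch between $r_\ell$ (the regularity of $\partial_x^\ell u$) and $r_{\ell-1}$. Each term then reduces to the Dirichlet-type boundary integral estimates of Himonas--Yan with data in $H^{r_\ell}$. Near $k=i\gamma$ I would either deform the contour slightly, picking up the residue, or use that $|k-i\gamma|\gtrsim1$ on the contour when $\gamma$ lies off the rays.

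The residue term is a smooth, exponentially decaying function of $x$ times a bounded function of $t\in[0,T]$. After a cutoff in $t$ and an extension to $x<0$, it lies in every $X^{s,b,\alpha}$. Its coefficient is bounded by $\|u_0\|_{H^s}+\|\bphi\|_{\mathcal H_T^s}$ plus the forcing. This uses $s>-j+\frac14$ and $r_\ell<\frac12$ restrictions only through trace compatibility.

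The upper bound $s<\frac32$ enters because for $r_\ell>\frac12$ compatibility conditions at $t=0$ would be needed. I would treat the range $r_\ell\in(\frac12,\ldots)$ by the usual subtraction of the compatible profile, or simply observe that $s<\frac32$ keeps every $r_\ell<\frac12+\ldots$ in the range where zero extension is bounded.

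\textbf{Step 3: nonlinear closure.} I would use the higher order KdV bilinear estimate
\[\|\partial_x(uv)\|_{X^{s,-b',\alpha-1}}\lesssim\|u\|_{X^{s,b,\alpha}}\|v\|_{X^{s,b,\alpha}},\qquad s>-j+\tfrac14,\]
with $b$ and $\alpha$ chosen as in the statement, $\alpha\le1+\frac{s}{2j+1}$. Combined with the $T^\epsilon$ gain, this makes $\Phi$ a contraction on a ball for small $T$. Uniqueness in $X_T$ and continuous dependence then follow from the same estimates applied to differences.

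\textbf{Main obstacle.} I expect the main obstacle to be Step 2 for the boundary contour terms. One must check that the Robin factor $(k-i\gamma)^{-1}$, with its behaviour at low frequencies and the possible pole, does not destroy the $X^{s,b,\alpha}$ bounds. One must also check that the coupled data $\varphi_\ell$ enter with exactly the regularity $r_\ell$. I would first handle the high-frequency part $|k|\gg1+|\gamma|$, where the factor is harmless, and then treat the compact low-frequency part directly, since there all norms are equivalent up to smooth cutoffs.
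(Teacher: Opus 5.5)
Your overall architecture (the UTM formula, linear bounds in $X^{s,b,\alpha}_{\Omega_T}$ with $b<\frac12$, a bilinear estimate, and a contraction with a power of $T$) matches the paper. The nonlinear closure, however, does not work as you set it up.

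\textbf{First gap: the forcing's time traces.} The forcing enters not only through the whole line Duhamel term $W$. It also enters the boundary part of the formula, namely the contour terms with $F(T,\alpha_{p,n}k)\mathcal P_{p,n}(k)$ and the residue coefficient. The paper converts this into a reduced IBVP with boundary data $-(\partial_x+\gamma)\partial_x^{\ell-1}W(\cdot,0)$, so one needs $\partial_x^\ell W(\cdot,0)\in H^{r_\ell}$ for $0\le\ell\le j$.

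With $b<\frac12$, these time traces are not controlled by $\|f\|_{X^{s,-b,\alpha-1}}$. For $\ell=j$, the nonresonant part of the trace acts fiberwise in $\tau$. Its $X^{s,-b}\to H^{s/(2j+1)}$ bound therefore holds exactly when
\[\sup_{\tau}\,\langle\tau\rangle^{\frac{2s}{2j+1}}\int_\R\langle\xi\rangle^{-2s}|\xi|^{2j}\langle\tau-\xi^{2j+1}\rangle^{2b-2}\,d\xi<\infty .\]
After the substitution $\eta=\xi^{2j+1}$, this forces $-s<(2j+1)(\frac12-b)$. Testing with $\mathcal F w$ supported in $|\tau|\le1$, $|\xi|\gg1$ shows the full trace estimate fails when this condition fails. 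The condition fails near $s=-j+\frac14$ for $b$ close to $\frac12$, which is what the bilinear estimate needs there.

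This is why the paper measures the forcing in $\mathcal Z^{s,-b,\alpha-1}=X^{s,-b,\alpha-1}\cap Y^{s,-b}$, with the temporal weight $\langle\tau\rangle^{s/(2j+1)}$. It is also why the fixed point needs the second bilinear estimate \eqref{eq:bilinear-Y-used}, together with a time cutoff producing $T^{b-b_0}$ in the $Y$ component (Lemma \ref{lem:restriction-nonlinear}). Your Step 3 uses only the $X$ bilinear estimate, so $\Phi$ is not shown to map a ball into itself. Note also that the $\mathcal D^\alpha$ component does not control time traces. It compensates the loss of temporal integrability at low spatial frequencies in the bilinear estimate.

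\textbf{Second gap: term-by-term estimation of the formula.} In Step 2 you bound the residue coefficient by $\|u_0\|_{H^s}$ and never treat the contour terms $\widehat u_0(\alpha_{p,n}k)\mathcal P_{p,n}(k)$. For $s<-\frac12$, which lies in the range of the theorem, values $\widehat u_0(\lambda)$ with $\im\lambda<0$ are not controlled by $\|u_0\|_{H^s(\R^+)}$. Take $u_n=n\phi(nx)$ with $\phi\in C_0^\infty(0,1)$ and $\int\phi=1$. Then $\widehat{u_n}(\lambda)\to1$, while $\|u_n\|_{H^s(\R^+)}\le\|u_n-\delta_0\|_{H^s(\R)}\to0$.

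The paper never evaluates $\widehat u_0$ or $F$ off the real line for rough data. Instead it writes, on $\Omega_T$,
\[S_j[u_0,\bphi,f]=U+W+S_j[0,\bphi^\sharp,0],\qquad \varphi_\ell^\sharp=\varphi_\ell-(\partial_x+\gamma)\partial_x^{\ell-1}(U+W)(\cdot,0).\]
Here $U$ is the flow of an arbitrary extension of $u_0$, and $\bphi^\sharp$ is zero-extended from $(0,T)$. That zero extension is bounded because $|r_\ell|<\frac12$, which is the real role of $s<\frac32$.

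In the reduced problem, the residue coefficient involves only $\widetilde\varphi^\sharp_\ell(2,-i\gamma^{2j+1})$. This is bounded by duality with $e^{-\gamma^{2j+1}t}\mathbf{1}_{[0,2]}\in H^{-r_\ell}$, since $r_\ell>-\frac12$. What must then be proved are trace bounds for $\partial_x^\ell U(\cdot,0)$ and $\partial_x^\ell W(\cdot,0)$ in $H^{r_\ell}$ up to $\ell=j$. This endpoint lies outside the Dirichlet hierarchy $\ell\le j-1$ of Himonas--Yan. The paper proves it separately in Propositions \ref{prop:homogeneous-whole-line} and \ref{prop:inhomogeneous-whole-line}.

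The same endpoint affects your reduction of the $\ell=j$ boundary integral, whose multiplier is $\sim k^{j}$, to the Dirichlet integrals. The direct ray estimates of Section \ref{sec:reduced-ibvp} do cover this case.
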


\begin{remark}\label{rem:intro-regularity-range}
For smooth solutions, the initial and boundary data satisfy the usual corner compatibility conditions. In particular,
\[\varphi_\ell(0)=\partial_x^\ell u_0(0)+\gamma\partial_x^{\ell-1}u_0(0),\qquad 1\le\ell\le j.\]
In the range of Theorem \ref{mainresult},
\[-j+\frac14<s<\frac32,\]
we have
\[-\frac12<r_\ell(s)<\frac12,\qquad 1\le\ell\le j.\]
Therefore, the corner traces are not defined in the data topology, and no compatibility condition between the initial and boundary data is imposed. The upper bound $s<\frac32$ is only used to remain below the boundary trace threshold. Higher regularity can be treated by imposing the corresponding compatibility conditions.
\end{remark}

\begin{remark}\label{rem:intro-related-results}
Theorem \ref{mainresult} is consistent with two previously studied cases.
\begin{enumerate}
\item When $j=1$, the problem becomes
\[\partial_tu+\partial_x^3u+\frac12\partial_x(u^2)=0,\qquad \partial_xu(t,0)+\gamma u(t,0)=\varphi_1(t),\]
and
\[r_1(s)=\frac{s}{3}.\]
The regularity range in Theorem \ref{mainresult} becomes
\[-\frac34<s<\frac32,\]
with $\varphi_1\in H^{s/3}(0,T)$. Thus Theorem \ref{mainresult} recovers the low regularity Robin and Neumann theory for the classical KdV equation established by Himonas and Yan \cite{Himonas2022-1}.
\item Assume that $\gamma\neq0$ and set
\[h_\ell:=\gamma^{-1}\varphi_\ell,\qquad 1\le\ell\le j.\]
Then the Robin condition can be written as
\[\partial_x^{\ell-1}u(t,0)+\gamma^{-1}\partial_x^\ell u(t,0)=h_\ell(t).\]
Hence the formal limit $|\gamma|\to\infty$ gives
\[\partial_x^{\ell-1}u(t,0)=h_\ell(t),\qquad 1\le\ell\le j,\]
which is the higher order Dirichlet problem studied by Himonas and Yan \cite{Himonas2022}.
\end{enumerate}
Thus, at the level of the boundary conditions, the present Robin hierarchy contains the derivative hierarchy when $\gamma=0$ and formally approaches the Dirichlet hierarchy as $|\gamma|\to\infty$. The latter statement concerns only the boundary operator and does not assert convergence of the corresponding solutions. Some corresponding reductions of the UTM representation are recorded in Appendix \ref{app:special-cases}.
\end{remark}

\begin{remark}\label{rem:intro-general-robin}
The restriction \eqref{eq:diagonal-robin} is a structural assumption in the present construction. When the Robin parameters are equal, the boundary recursion involves powers of a single parameter $\gamma$, which leads to the polynomial structure used in the Lagrange interpolation argument and, ultimately, to the exact factorization \eqref{eq:intro-boundary-factorization}. For distinct parameters $\gamma_1,\ldots,\gamma_j$, the corresponding coefficients involve products of different Robin parameters, and this one parameter factorization is no longer available in the same form. Extending the explicit UTM construction and the low regularity theory to the general Robin hierarchy remains an open problem.
\end{remark}

\subsection{Construction of the UTM representation for Robin boundary conditions}

The main novelty of this paper is the explicit resolution of the boundary system arising in the UTM formulation of the higher order IBVP with Robin boundary conditions. In contrast with the Dirichlet case, the Robin conditions couple adjacent boundary traces and produce a nontrivial algebraic structure in the global relation. We briefly describe the mechanism of this construction. The spectral sectors, rotated roots, boundary transforms, and polynomial coefficients appearing below are defined precisely in Section \ref{sec:UTM}.

The global relation contains the $2j+1$ boundary transforms
\[\widetilde g_\ell(t,\lambda),\qquad 0\le \ell\le2j.\]
Under the diagonal Robin condition, the lower order transforms satisfy
\[\widetilde g_\ell(t,\lambda)=(-\gamma)^\ell\widetilde g_0(t,\lambda)+\sum_{q=1}^\ell(-\gamma)^{\ell-q}\widetilde\varphi_q(t,\lambda),\qquad 1\le \ell \le j.\]
Thus the unknown boundary transforms reduce to
\[\widetilde g_{j+1},\ldots,\widetilde g_{2j},\widetilde g_0,\]
giving exactly $j+1$ unknowns.

For each spectral sector $D_{2p}^+$, the global relation is evaluated at $j+1$ rotated spectral points
\[\alpha_{p,n}k,\qquad 1\le n\le j+1,\]
which leads to a sectorwise linear system
\begin{equation}\label{eq:intro-sector-system}
A_p(k)X(t,k)=\mathcal I_p(t,k).
\end{equation}
The matrix $A_p(k)$ has a Vandermonde structure in its first $j$ columns, while its last column is generated by the Robin recursion through the polynomial
\[\mathfrak a_0(z,k):=\sum_{q=0}^ji^{j-q}k^{j-q}(-\gamma)^qz^{2j-q}.\]

A direct computation of the full inverse $A_p(k)^{-1}$ would be lengthy and unnecessary. The UTM representation requires only the linear combination
\begin{equation}\label{eq:intro-required-combination}
\sum_{m=0}^{j-1}X_{m+1}(t,k)+\mathfrak a_0(1,k)X_{j+1}(t,k).
\end{equation}
The key observation is that this quantity can be determined directly from \eqref{eq:intro-sector-system} by Lagrange interpolation.

Let $Q_p$ be the polynomial whose roots are $\alpha_{p,n}$, and let $L_{p,n}$ denote the corresponding Lagrange basis polynomials. The interpolation argument yields
\[\mathcal P_{p,n}(k)=L_{p,n}(1)-\frac{Q_p(1)}{Q_p'(\alpha_{p,n})}\frac{k}{k-i\gamma},\qquad 1\le n\le j+1,\]
which gives precisely the coefficients needed to recover \eqref{eq:intro-required-combination}. The common Robin parameter is crucial in this step. The corresponding telescoping identities concentrate the dependence on $\gamma$ into the single factor
\[\frac{k}{k-i\gamma}.\]

A second interpolation argument gives the exact boundary factorization
\begin{equation}\label{eq:intro-boundary-factorization}
(-1)^{j+1}\left(\mathfrak b_\ell(k)-\sum_{n=1}^{j+1}\mathcal P_{p,n}(k)\mathfrak b_\ell(\alpha_{p,n}k)\right)=c_{p,\ell}\frac{k^{2j+1-\ell}}{k-i\gamma},\qquad 1\le p,\ell\le j.
\end{equation}
The coefficients $c_{p,\ell}$ depend only on $j$, $p$, and $\ell$, and are independent of both $k$ and $\gamma$. This identity identifies simultaneously the high frequency order of the boundary multiplier and the only possible Robin pole $k=i\gamma$.

The factor $k-i\gamma$ also leads to a residue contribution when the pole $k=i\gamma$ lies in the relevant UTM contour. The sector geometry, the interpolation arguments, the residue contribution, and the verification of the initial and Robin boundary conditions are carried out in Section \ref{sec:UTM}.

\medskip

Once the UTM representation has been constructed, the proof of Theorem \ref{mainresult} is completed by combining the linear estimates for $S_j$ with the bilinear estimates recalled in Section \ref{sec:prelim}. In particular, Proposition \ref{prop:linear-estimates} controls the forced UTM solution operator in the modified restriction spaces, while Lemma \ref{lem:restriction-nonlinear} provides the time localized nonlinear estimate with a positive power of the lifespan. These estimates yield a contraction for
\[u=S_j\left[u_0,\bphi,-\frac12\partial_x(u^2)\right]\]
on a sufficiently short time interval. Thus the main additional difficulty lies in the construction and analysis of the Robin UTM formula described above.
\subsection{Organization of the paper}\label{sec:intro-organization}

The rest of the paper is organized as follows. In Section \ref{sec:prelim}, we introduce the function spaces and basic estimates used throughout the paper and formulate the linear IBVP with Robin boundary conditions. In Section \ref{sec:UTM}, we derive the UTM representation, resolve the sectorwise boundary systems, and verify the equivalence between the differential and UTM formulations. In Section \ref{sec:reduced-ibvp}, we establish the linear estimates for the reduced IBVP with Robin boundary conditions. In Section \ref{sec:forced-linear}, we obtain the estimates for the forced linear IBVP. In Section \ref{sec:well-posedness}, we prove Theorem \ref{mainresult} by a contraction argument. Finally, Appendix \ref{app:special-cases} records the reductions to the classical KdV Robin and Neumann cases and the formal relation with the higher order Dirichlet problem.

\subsection*{Acknowledgments}
This work was carried out during some visits of the authors to the Universidad Nacional de Colombia-Sede Manizales and Ewha Womans University. The authors would like to thank the Universities for its hospitality. C. K. was partially supported by Young Research Program of the National Research Foundation of Korea(NRF) grant funded by the Korea government(MSIT) (No. RS-2023-00210210) and Global - Learning \& Academic research institution for Master’s·PhD students, and Postdocs(G-LAMP) Program of the National Research Foundation of Korea(NRF) grant funded by the Ministry of Education(No. RS-2025-25442252).

\section{Preliminaries}\label{sec:prelim}
We write 
\[\R^{+}:=(0,\infty),\qquad \Omega_T:=(0,T) \times \R^{+}.\]
The notation $A\lesssim B$ means that $A\le CB$ with a universal constant $C>0$. When the implicit constant depends on parameters, we indicate this dependence by a subscript. We write $A\sim B$ if both $A\lesssim B$ and $B\lesssim A$ hold. 

For a function $u=u(t,x)$ on $\R^{2}$, we use the space-time Fourier transform
\[\mathcal{F}(u)(\tau,\xi)=\int_{\R^{2}}e^{-i(x\xi+t\tau)}u(t,x)\,dx\,dt,\]
with the inversion formula
\[u(t,x)=\frac{1}{(2\pi)^2}\int_{\R^{2}}e^{i(x\xi+t\tau)}\mathcal{F}(u)(\tau,\xi)\,d\xi\,d\tau.\]
For functions of one variable (either temporal or spatial), we use the analogous one-dimensional Fourier transform and inversion conventions as follows:
\[\widehat{f}(\zeta)=\int_{\R}e^{-iy\zeta}f(y)\,dy,\qquad f(y)=\frac{1}{2\pi}\int_{\R}e^{iy\zeta}\widehat{f}(\zeta)\,d\zeta.\]
We use the same notation $\widehat{\cdot}$ for both temporal and spatial one-dimensional Fourier transforms; the underlying variable will be clear from the context.

\subsection{Function spaces}

For an interval $I\subset\R$, possibly unbounded, the Sobolev space $H^r(I)$ is understood as the restriction space
\[H^r(I):=\{g:\ g=G|_I\ \text{for some }G\in H^r(\R)\},\]
with norm
\[\|g\|_{H^r(I)}:=\inf\{\|G\|_{H^r(\R)}:\ G|_I=g\}.\]
We shall use the standard embedding
\[H^{\sigma_1}(I)\hookrightarrow H^{\sigma_0}(I),\qquad \sigma_1\ge\sigma_0.\]

For real numbers $s$ and $b$, the $X^{s,b}(\R^{2})$ spaces corresponding to the linear part of \eqref{mainequation} are defined by \[\|u\|_{X^{s,b}(\R^{2})}^{2}:=\int_{\R}\int_{\R}\bra{\xi}^{2s}\bra{\tau-\xi^{2j+1}}^{2b}|\mathcal{F}(u)(\tau,\xi)|^{2}\,d\xi\,d\tau,\]
where we use the Japanese bracket $\bra{\cdot} = (1+ |\cdot|^2)^{\frac12}$. The $X^{s,b}$ spaces were first introduced in their present form by Bourgain \cite{Bourgain1993-1, Bourgain1993-2} in his works on the periodic NLS and generalized KdV equations, although related spaces had already appeared in Beals' work on one-dimensional wave equations \cite{Beals}. Since then, these spaces have played a central role in the analysis of dispersive equations and have been further developed by many authors, in particular, Kenig, Ponce, and Vega \cite{KPV1996} and Tao \cite{Tao2001}.

Although the $X^{s,b}$ framework with $b>\frac12$ is well adapted to the initial value problem for dispersive equations, the half-line problems require a different choice of the $X^{s,b}$ exponent. Indeed, the estimates for the UTM solution of the reduced Robin IBVP yield the required $X^{s,b}$ controls only for $0\le b<\frac12$ (see Proposition \ref{prop:reduced-ibvp} below). Hence the solution space for the IBVP must be built with an $X^{s,b}$ exponent below $\frac12$.

This creates an additional difficulty in the controls of the inhomogeneous and nonlinear terms. The temporal trace estimates required to construct the boundary correction are not controlled by the $X^{s,-b}$ norm alone over the regularity range considered here, especially when $s<0$. To recover the required temporal control, we supplement the $X^{s,-b}$ norm with the $Y^{s,-b}$ norm, see Proposition \ref{prop:inhomogeneous-whole-line} below or Theorem 3.3 in \cite{Himonas2022}. The nonlinear estimates are therefore formulated to provide the corresponding $Y^{s,-b}$ control.

We therefore introduce the temporal space $Y^{s,b}(\R^2)$, which is used to control the temporal traces of the whole line Duhamel term. For $s,b\in\R$, we define
\[\|u\|_{Y^{s,b}(\R^2)}^2:=\int_{\R}\int_{\R}\langle\tau\rangle^{\frac{2s}{2j+1}}\langle\tau-\xi^{2j+1}\rangle^{2b}|\mathcal F(u)(\tau,\xi)|^2\,d\xi\,d\tau.\]

The choice $b<\frac12$, however, creates a loss of temporal integrability in the bilinear estimates when one or both input frequencies are small. To compensate for this loss, we introduce the space $\mathcal D^\alpha(\R^2)$, defined by
\[\|u\|_{\mathcal D^\alpha(\R^2)}^2:=\int_{\R}\int_{-1}^1\langle\tau\rangle^{2\alpha}|\mathcal F(u)(\tau,\xi)|^2\,d\xi\,d\tau.\]
The restriction of the $\xi$ integration to $[-1,1]$ localizes this additional control to low spatial frequencies, while the condition $\alpha>\frac12$ provides the temporal integrability required in the bilinear estimates. We then define
\[X^{s,b,\alpha}(\R^2):=X^{s,b}(\R^2)\cap\mathcal D^\alpha(\R^2),\]
equipped with the norm
\[\|u\|_{X^{s,b,\alpha}(\R^2)}^2:=\|u\|_{X^{s,b}(\R^2)}^2+\|u\|_{\mathcal D^\alpha(\R^2)}^2.\]
Equivalently,
\[\|u\|_{X^{s,b,\alpha}(\R^2)}^2\sim\int_{\R}\int_{\R}\left(\langle\xi\rangle^s\langle\tau-\xi^{2j+1}\rangle^b+\mathbf{1}_{|\xi|\le1}(\xi)\langle\tau\rangle^\alpha\right)^2|\mathcal F(u)(\tau,\xi)|^2\,d\xi\,d\tau,\]
where $\mathbf{1}_A$ is the characteristic function of a set $A$.

The following estimates are the forms of the higher order KdV bilinear estimates from \cite[Theorems 1.3 and 1.4]{Himonas2022} that will be used in the nonlinear argument.

\begin{proposition}[Bilinear estimates]\label{prop:bilinear}
Assume that
\[-j+\frac14<s<\frac32.\]
Then there exist $b_0$, $b$, $\widetilde b$, $b_1$, $\alpha$, and $\widetilde\alpha$ satisfying
\[0<b_0<b<\widetilde b<b_1<\frac12,\qquad \frac12<\alpha<\widetilde\alpha<1,\]
and
\[\widetilde\alpha\le1-\widetilde b,\qquad \alpha\le1+\frac{s}{2j+1},\]
such that, for all $F,G\in X^{s,b,\alpha}(\R^2)$,
\begin{equation}\label{eq:bilinear-X-used}
\|\partial_x(FG)\|_{X^{s,-\widetilde b,\widetilde\alpha-1}(\R^2)}\lesssim\|F\|_{X^{s,b,\alpha}(\R^2)}\|G\|_{X^{s,b,\alpha}(\R^2)},
\end{equation}
and, for all $F,G\in X^{s,b_0}(\R^2)$,
\begin{equation}\label{eq:bilinear-Y-used}
\|\partial_x(FG)\|_{Y^{s,-b_1}(\R^2)}\lesssim\|\partial_x(FG)\|_{X^{s,-b_1}(\R^2)}+\|F\|_{X^{s,b_0}(\R^2)}\|G\|_{X^{s,b_0}(\R^2)}.
\end{equation}
\end{proposition}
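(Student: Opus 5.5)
This proposition is a restatement of the bilinear estimates of Himonas and Yan \cite[Theorems 1.3 and 1.4]{Himonas2022} for the higher order KdV equation $\partial_tu+(-1)^{j+1}\partial_x^{2j+1}u+\frac12\partial_x(u^2)=0$. So the plan is to reduce it to those results by matching notation and choosing the parameters compatibly, not to reprove the multilinear analysis.

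\textbf{Step 1: match conventions.} I first check that the spaces $X^{s,b}$, $Y^{s,b}$ and $\mathcal D^\alpha$ defined above agree with those of \cite{Himonas2022}, up to equivalent norms.
\begin{itemize}
\item The phase $\tau-\xi^{2j+1}$ is the symbol of the linear part. Indeed, $\mathcal F\big((-1)^{j+1}\partial_x^{2j+1}u\big)=(-1)^{j+1}(i\xi)^{2j+1}\mathcal F(u)=-i\xi^{2j+1}\mathcal F(u)$, so the linear equation becomes $i(\tau-\xi^{2j+1})\mathcal F(u)=0$.
\item With this phase, the multiplier $\langle\tau\rangle^{s/(2j+1)}$ in $Y^{s,b}$ and the low frequency cutoff $|\xi|\le1$ in $\mathcal D^\alpha$ coincide with the cited definitions.
\item If the reference uses the opposite sign for the dispersion, or a different Fourier normalization, I would apply the reflection $\xi\mapsto-\xi$, $\tau\mapsto-\tau$ (equivalently $u(t,x)\mapsto u(-t,-x)$). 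This preserves every weight involved and maps $\partial_x(FG)$ to $\pm\partial_x(\tilde F\tilde G)$.
\end{itemize}

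\textbf{Step 2: choose the parameters.} Theorem 1.3 of \cite{Himonas2022} gives \eqref{eq:bilinear-X-used} for any $s>-j+\frac14$. It does so for $b$, $\widetilde b$ close to $\frac12$ and $\alpha$, $\widetilde\alpha$ close to $\frac12$, subject to $\widetilde\alpha\le1-\widetilde b$ and $\alpha\le1+\frac{s}{2j+1}$. Theorem 1.4 gives the $Y$-estimate \eqref{eq:bilinear-Y-used} for some $b_0<b_1<\frac12$ in an admissible window.

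The task is to pick one common chain $0<b_0<b<\widetilde b<b_1<\frac12$ lying in both windows. Since each window is an open interval accumulating at $\frac12$ from below, I would take
\[
b_0=\tfrac12-4\varepsilon,\qquad b=\tfrac12-3\varepsilon,\qquad \widetilde b=\tfrac12-2\varepsilon,\qquad b_1=\tfrac12-\varepsilon,
\]
and
\[
\alpha=\tfrac12+\varepsilon,\qquad \widetilde\alpha=\tfrac12+2\varepsilon,
\]
with $\varepsilon=\varepsilon(s,j)>0$ small.

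\textbf{Step 3: verify the constraints.}
\begin{itemize}
\item $\widetilde\alpha\le1-\widetilde b$ reads $\frac12+2\varepsilon\le\frac12+2\varepsilon$, which holds.
\item $\alpha\le1+\frac{s}{2j+1}$ holds for small $\varepsilon$. The right-hand side exceeds $\frac12$ because $s>-j+\frac14>-\frac{2j+1}{2}$.
\item The upper bound $s<\frac32$ is harmless, since the bilinear estimates are only easier at higher $s$.
\end{itemize}

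\textbf{Main obstacle.} The real difficulty is confirming that the admissible ranges in the two cited theorems are stated in a form that permits this simultaneous choice. The reference's estimates may be stated with $\widetilde b=b$, or with specific explicit thresholds depending on $s$.

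If \eqref{eq:bilinear-X-used} is only available with $b$ on the left, I would deduce the version with $\widetilde b>b$ directly. The weight $\langle\tau-\xi^{2j+1}\rangle^{-\widetilde b}$ is pointwise at most $\langle\tau-\xi^{2j+1}\rangle^{-b}$, and similarly $\langle\tau\rangle^{\widetilde\alpha-1}\le\langle\tau\rangle^{\alpha'-1}$ whenever $\alpha'\ge\widetilde\alpha$. So weakening the output norm is monotone.

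Failing that, I would reinspect the dyadic case analysis in the reference. That analysis splits into high-high-low, high-low-high and low-frequency interactions, with the $\mathcal D^\alpha$ component handling $|\xi_1|$ or $|\xi_2|\le1$. One then checks that the exponent margins depend continuously on $(b,\widetilde b,\alpha,\widetilde\alpha)$ near $(\frac12,\frac12,\frac12,\frac12)$, which yields the required open neighbourhood of parameters.
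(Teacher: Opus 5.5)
Your route is the paper's route. The paper gives no separate argument: it imports these estimates from \cite[Theorems 1.3 and 1.4]{Himonas2022}, so the content is parameter bookkeeping, and your $\varepsilon$-chain does satisfy every stated constraint.

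However, one step of your fallback fails as written. For $\widetilde\alpha>\alpha$ one has $\langle\tau\rangle^{\widetilde\alpha-1}\ge\langle\tau\rangle^{\alpha-1}$, so $\mathcal D^{\widetilde\alpha-1}$ is a \emph{stronger} norm than $\mathcal D^{\alpha-1}$, not a weaker one. Suppose the cited estimate has the same $(b,\alpha)$ on both sides, which is the scenario your fallback addresses. Then monotonicity gives you $\widetilde b>b$ on the output, but not $\widetilde\alpha>\alpha$. This gain is not cosmetic. It is what produces the factor $T^{\widetilde\alpha-\alpha}$ in Lemma \ref{lem:time-cutoff}. It also cannot be moved onto the inputs, because multiplication by $\psi_T$ yields no positive power of $T$ in $\mathcal D^\alpha$ when $\alpha>\frac12$.

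The fix uses the constraint $\widetilde\alpha\le1-\widetilde b$, which you checked but did not exploit.
\begin{itemize}
\item For $|\xi|\le1$ one has $\langle\tau\rangle\sim\langle\tau-\xi^{2j+1}\rangle$ and $\langle\xi\rangle^s\sim_s1$.
\item Hence $\langle\tau\rangle^{\widetilde\alpha-1}\le\langle\tau\rangle^{-\widetilde b}\lesssim_s\langle\xi\rangle^s\langle\tau-\xi^{2j+1}\rangle^{-\widetilde b}$ on that region.
\item Therefore $\|w\|_{\mathcal D^{\widetilde\alpha-1}(\R^2)}\lesssim_s\|w\|_{X^{s,-\widetilde b}(\R^2)}$, and the $\mathcal D$ component of the output in \eqref{eq:bilinear-X-used} is redundant.
\end{itemize}
So \eqref{eq:bilinear-X-used} follows from $\|\partial_x(FG)\|_{X^{s,-b}}\lesssim\|F\|_{X^{s,b,\alpha}}\|G\|_{X^{s,b,\alpha}}$ by monotonicity in $b$ alone. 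This requires $\alpha<1-b$, so that you can take $b<\widetilde b<1-\alpha$ and then $\widetilde\alpha\in(\alpha,1-\widetilde b]$.

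Two smaller points:
\begin{itemize}
\item The condition $\alpha\le1+\frac{s}{2j+1}$ comes from the reduced linear estimate (Proposition \ref{prop:reduced-ibvp}), not from the bilinear theorem.
\item The reflection $(t,x)\mapsto(-t,-x)$ preserves the phase $\tau-\xi^{2j+1}$ up to sign, so it does not switch the sign of the dispersion. To do that, use $x\mapsto-x$ alone.
\end{itemize}
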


For the forcing term, we use the intersection space
\[\mathcal Z^{s,b,\alpha}(\R^2):=X^{s,b,\alpha}(\R^2)\cap Y^{s,b}(\R^2),\]
equipped with the norm
\[\|f\|_{\mathcal Z^{s,b,\alpha}(\R^2)}^2:=\|f\|_{X^{s,b,\alpha}(\R^2)}^2+\|f\|_{Y^{s,b}(\R^2)}^2.\]

For $T>0$, we define the corresponding restriction spaces on $\Omega_T$. First,
\[X_{\Omega_T}^{s,b,\alpha}:=\{u:\ u=v|_{\Omega_T}\ \text{for some }v\in X^{s,b,\alpha}(\R^2)\},\]
with norm
\[\|u\|_{X_{\Omega_T}^{s,b,\alpha}}:=\inf\{\|v\|_{X^{s,b,\alpha}(\R^2)}:\ v|_{\Omega_T}=u\}.\]
We also write
\[X_{(0,T)\times\R^+}^{s,b,\alpha}:=X_{\Omega_T}^{s,b,\alpha}.\]

Similarly,
\[Y_{\Omega_T}^{s,b}:=\{u:\ u=v|_{\Omega_T}\ \text{for some }v\in Y^{s,b}(\R^2)\},\]
with norm
\[\|u\|_{Y_{\Omega_T}^{s,b}}:=\inf\{\|v\|_{Y^{s,b}(\R^2)}:\ v|_{\Omega_T}=u\},\]
and we write
\[Y_{(0,T)\times\R^+}^{s,b}:=Y_{\Omega_T}^{s,b}.\]

Finally,
\[\mathcal Z_{\Omega_T}^{s,b,\alpha}:=\{f:\ f=F|_{\Omega_T}\ \text{for some }F\in\mathcal Z^{s,b,\alpha}(\R^2)\},\]
with norm
\[\|f\|_{\mathcal Z_{\Omega_T}^{s,b,\alpha}}:=\inf\{\|F\|_{\mathcal Z^{s,b,\alpha}(\R^2)}:\ F|_{\Omega_T}=f\}.\]
We also write
\[\mathcal Z_{(0,T)\times\R^+}^{s,b,\alpha}:=\mathcal Z_{\Omega_T}^{s,b,\alpha}.\]
We use throughout the notation $r_\ell=r_\ell(s)$ and the boundary data spaces $\mathcal H_{\mathrm{loc}}^s$ and $\mathcal H_T^s$ introduced in Section \ref{sec:introduction}.

\subsection{Basic lemmas}
We fix a cutoff function 
\[\psi\in C_0^\infty(-1,1),\qquad 0\le\psi\le1,\qquad \psi(t)=1\quad \mbox{for} \quad |t|\le\frac12.\]
For $T>0$, we set
\begin{equation}\label{eq:psi_T}
\psi_T(t):=\psi\left(\frac{t}{2T}\right).
\end{equation}
Then 
\[\psi_T(t)=1,\qquad |t|\le T.\]

\begin{lemma}\label{lem:bourgain-embedding}
Let $s\in\R$. If $b_1\ge b_0$ and $\alpha_1\ge\alpha_0$, then
\[X^{s,b_1,\alpha_1}(\R^2)\hookrightarrow X^{s,b_0,\alpha_0}(\R^2).\]
Consequently,
\[X_{\Omega_T}^{s,b_1,\alpha_1}\hookrightarrow X_{\Omega_T}^{s,b_0,\alpha_0}.\]
\end{lemma}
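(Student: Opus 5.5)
The first claim is a pointwise comparison of Fourier weights, and the second follows from it by passing to infima over extensions.

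\emph{Whole-line embedding.} Assume $b_1\ge b_0$ and $\alpha_1\ge\alpha_0$. Since $\bra{\cdot}\ge1$, we have
\[\bra{\tau-\xi^{2j+1}}^{2b_0}\le\bra{\tau-\xi^{2j+1}}^{2b_1},\qquad \bra{\tau}^{2\alpha_0}\le\bra{\tau}^{2\alpha_1}\]
for every $(\tau,\xi)\in\R^2$. Multiply the first inequality by $\bra{\xi}^{2s}|\mathcal F(u)(\tau,\xi)|^2$ and integrate over $\R^2$. This gives $\|u\|_{X^{s,b_0}(\R^2)}\le\|u\|_{X^{s,b_1}(\R^2)}$. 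Multiply the second inequality by $|\mathcal F(u)|^2$ and integrate over $\R\times[-1,1]$. This gives $\|u\|_{\mathcal D^{\alpha_0}(\R^2)}\le\|u\|_{\mathcal D^{\alpha_1}(\R^2)}$. Adding the squares of the two bounds yields
\[\|u\|_{X^{s,b_0,\alpha_0}(\R^2)}\le\|u\|_{X^{s,b_1,\alpha_1}(\R^2)}.\]
In particular, every $u\in X^{s,b_1,\alpha_1}(\R^2)$ lies in $X^{s,b_0,\alpha_0}(\R^2)$, and the inclusion map has norm at most $1$.

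\emph{Restriction spaces.} Let $u\in X_{\Omega_T}^{s,b_1,\alpha_1}$, and let $v\in X^{s,b_1,\alpha_1}(\R^2)$ be any extension with $v|_{\Omega_T}=u$. By the first part, $v\in X^{s,b_0,\alpha_0}(\R^2)$, so $u\in X_{\Omega_T}^{s,b_0,\alpha_0}$. Moreover,
\[\|u\|_{X_{\Omega_T}^{s,b_0,\alpha_0}}\le\|v\|_{X^{s,b_0,\alpha_0}(\R^2)}\le\|v\|_{X^{s,b_1,\alpha_1}(\R^2)}.\]
Taking the infimum over all such extensions $v$ gives $\|u\|_{X_{\Omega_T}^{s,b_0,\alpha_0}}\le\|u\|_{X_{\Omega_T}^{s,b_1,\alpha_1}}$.

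There is no genuine obstacle here. The only point requiring care is the restriction-space step: any extension admissible for the stronger norm is also admissible for the weaker one, so the inequality passes directly to the infimum.
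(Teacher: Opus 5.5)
Your proposal is correct and follows the same route as the paper: the pointwise comparisons $\bra{\tau-\xi^{2j+1}}^{b_0}\le\bra{\tau-\xi^{2j+1}}^{b_1}$ and $\bra{\tau}^{\alpha_0}\le\bra{\tau}^{\alpha_1}$ give the whole line embedding. The restriction space embedding then follows by taking the infimum over admissible extensions.
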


\begin{proof}
The whole line embedding follows directly from
\[\bra{\tau-\xi^{2j+1}}^{b_0}\le\bra{\tau-\xi^{2j+1}}^{b_1}\]
and
\[\bra{\tau}^{\alpha_0}\le\bra{\tau}^{\alpha_1}.\]
The restriction space embedding follows by taking the infimum over all admissible extensions.
\end{proof}

\begin{lemma}\label{lem:time-cutoff}
Let $s\in\R$. Suppose that
\[-\frac12<b_0<b_1<\frac12,\qquad \frac12<\alpha_0<\alpha_1<1.\]
Then, for every $0<T\le1$,
\begin{equation}\label{eq:time-cutoff-X}
\|\psi_Tu\|_{X^{s,b_0,\alpha_0-1}(\R^2)} \lesssim_{\psi,b_0,b_1,\alpha_0,\alpha_1} T^{\min\{b_1-b_0,\alpha_1-\alpha_0\}} \|u\|_{X^{s,b_1,\alpha_1-1}(\R^2)}.
\end{equation}
In particular,
\begin{equation}\label{eq:time-cutoff-pure-X}
\|\psi_Tu\|_{X^{s,b_0}(\R^2)}\lesssim_{\psi,b_0,b_1} T^{b_1-b_0}\|u\|_{X^{s,b_1}(\R^2)}.
\end{equation}
Here, $\psi_T$ is introduced as in \eqref{eq:psi_T}.
\end{lemma}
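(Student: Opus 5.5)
The plan is to split the norm into its two pieces and treat each separately. By definition,
\[\|\psi_Tu\|_{X^{s,b_0,\alpha_0-1}}^2=\|\psi_Tu\|_{X^{s,b_0}}^2+\|\psi_Tu\|_{\mathcal D^{\alpha_0-1}}^2 .\]
It therefore suffices to prove two estimates:
\[\|\psi_Tu\|_{X^{s,b_0}}\lesssim T^{b_1-b_0}\|u\|_{X^{s,b_1}},\qquad \|\psi_Tu\|_{\mathcal D^{\alpha_0-1}}\lesssim T^{\alpha_1-\alpha_0}\|u\|_{\mathcal D^{\alpha_1-1}} .\]
Since $T\le1$, both powers are bounded by $T^{\min\{b_1-b_0,\alpha_1-\alpha_0\}}$, and \eqref{eq:time-cutoff-X} follows. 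Estimate \eqref{eq:time-cutoff-pure-X} is exactly the first of the two.

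\textbf{Reduction to one time variable.} Multiplication by $\psi_T$ acts only in $t$. For the $X^{s,b}$ piece, fix $\xi$ and conjugate by the free group: set
\[F_\xi(\tau):=\mathcal F(u)(\tau+\xi^{2j+1},\xi).\]
Multiplication by $\psi_T$ becomes convolution with $\widehat{\psi_T}$ in $\tau$, and the modulation weight $\langle\tau-\xi^{2j+1}\rangle$ becomes $\langle\tau\rangle$. The $X^{s,b}$ estimate thus reduces, uniformly in $\xi$ and after integrating against $\langle\xi\rangle^{2s}$, to the one-dimensional estimate
\[\|\psi_Tf\|_{H^{b_0}(\R)}\lesssim T^{b_1-b_0}\|f\|_{H^{b_1}(\R)},\qquad -\tfrac12<b_0\le b_1<\tfrac12 .\]
For the $\mathcal D$ piece, no conjugation is needed. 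For each fixed $|\xi|\le1$, we need the same inequality with exponents $\alpha_0-1\le\alpha_1-1$, which lie in $(-\tfrac12,0)$. Squaring and integrating over $\xi\in[-1,1]$ then gives the claim. So everything rests on this single one-dimensional lemma in the range $(-\tfrac12,\tfrac12)$.

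\textbf{The one-dimensional lemma (main obstacle).} This is the standard time-localization estimate (as in Ginibre, or Lemma 2.11 in Tao's book). I would prove it as follows.

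First reduce to the case $0\le b_0\le b_1<\tfrac12$. The case of two nonpositive exponents $-b_1\le -b_0$ follows by duality, since multiplication by the real function $\psi_T$ is self-adjoint. The mixed-sign case follows by composing the two cases through $H^0$, using $\psi_T=\psi_T\widetilde\psi_T$ with a slightly wider cutoff $\widetilde\psi_T$, so that the powers $T^{b_1}\cdot T^{-b_0}$ combine correctly.

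For $0\le b_0\le b_1<\tfrac12$, interpolate between two endpoint bounds:
\[\|\psi_Tf\|_{H^{b}}\lesssim\|f\|_{H^{b}}\ \text{uniformly in }T\le1,\qquad \|\psi_Tf\|_{L^2}\lesssim T^{b}\|f\|_{H^b}.\]
The first bound is the $T$-uniform multiplier property of $\psi_T$ on $H^b$ for $|b|<\tfrac12$; this is where $b<\tfrac12$ is essential, since for $b>\tfrac12$ the scaling would force a negative power of $T$. I would prove it by splitting $f$ into frequencies $|\tau|\lesssim T^{-1}$ and $|\tau|\gtrsim T^{-1}$, then using the bound
\[\|\widehat{\psi_T}\|_{L^1}\lesssim1\]
together with the Sobolev embedding $H^b\hookrightarrow L^{2/(1-2b)}$ and Hölder on the support of $\psi_T$. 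The second bound follows from Hölder's inequality, $\|\psi_T\|_{L^{1/b}}\sim T^{b}$, and the same Sobolev embedding. Interpolating these two bounds yields the gain $T^{b_1-b_0}$.

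The implicit constants depend only on $\psi$ and on the exponents, which completes the plan.
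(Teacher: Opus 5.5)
Your proposal is correct and follows essentially the paper's route. You split off the $\mathcal D^{\alpha_0-1}$ component and reduce it, for $|\xi|\le1$, to the same time-localization estimate that yields the pure $X^{s,b}$ cutoff bound. The paper reaches that reduction via $\langle\tau\rangle\sim\langle\tau-\xi^{2j+1}\rangle$, the projection $P_{\le1}$, and the $X^{0,\beta}$ case; you pass directly to the one-variable estimate, which is slightly cleaner.

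Your only real addition is a sketch of the one-dimensional estimate itself, which the paper simply cites as standard. You argue by duality, composition through $H^0$, and interpolation between the $T$-uniform $H^b$ multiplier bound and the H\"older--Sobolev bound $\|\psi_Tf\|_{L^2}\lesssim T^b\|f\|_{H^b}$. That sketch is sound.
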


\begin{proof}
The estimate \eqref{eq:time-cutoff-pure-X} is the standard time cutoff estimate in $X^{s,b}$. Since
\[-\frac12<\alpha_0-1<\alpha_1-1<0,\]
we use the same estimate on the low spatial frequency region. For $|\xi|\le1$,
\[\bra{\tau}\sim\bra{\tau-\xi^{2j+1}},\]
and hence, for every $\beta\in\R$,
\[\|u\|_{\mathcal D^\beta(\R^2)}\sim\|P_{\le1}u\|_{X^{0,\beta}(\R^2)},\]
where $P_{\le1}$ denotes the spatial Fourier projection onto $|\xi|\le1$. Since $P_{\le1}$ commutes with multiplication by $\psi_T$, \eqref{eq:time-cutoff-pure-X} with $s=0$, $b_0=\alpha_0-1$, and $b_1=\alpha_1-1$ gives
\[\begin{aligned}
\|\psi_Tu\|_{\mathcal D^{\alpha_0-1}(\R^2)} &\lesssim \|\psi_TP_{\le1}u\|_{X^{0,\alpha_0-1}(\R^2)}\\
&\lesssim_{\psi,\alpha_0,\alpha_1} T^{\alpha_1-\alpha_0}\|P_{\le1}u\|_{X^{0,\alpha_1-1}(\R^2)}\\
&\lesssim T^{\alpha_1-\alpha_0}\|u\|_{\mathcal D^{\alpha_1-1}(\R^2)}.
\end{aligned}\]
Combining this estimate with \eqref{eq:time-cutoff-pure-X} proves \eqref{eq:time-cutoff-X}.
\end{proof}

\begin{lemma}\label{lem:boundary-extension}
Let $-\frac12<r<\frac12$ and $T>0$. For every $g\in H^r(0,T)$, let $\mathcal E_Tg$ denote the zero extension of $g$ to $\R$. Then
\begin{equation}\label{eq:boundary-extension}
\|\mathcal E_Tg\|_{H^r(\R)}\lesssim \|g\|_{H^r(0,T)}.
\end{equation}
The implicit constant is independent of $T>0$.
\end{lemma}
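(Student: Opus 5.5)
The plan is to prove the bound first for $0\le r<\frac12$ and then get $-\frac12<r<0$ by duality. The fact that makes this work is that for $|r|<\frac12$ the space $H^r(0,T)$ coincides with $H^r_0(0,T)$ (the closure of $C_0^\infty(0,T)$), and multiplication by the indicator $\mathbf 1_{(0,T)}$ is bounded on $H^r(\R)$.

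\textbf{Step 1: the case $r=0$ and the multiplier bound.} For $r=0$ the estimate is trivial: the zero extension is an isometry of $L^2$. For $0<r<\frac12$, take any extension $G\in H^r(\R)$ with $G|_{(0,T)}=g$. Then $\mathcal E_Tg=\mathbf 1_{(0,T)}G$, so it suffices to prove
\[\|\mathbf 1_{(0,T)}G\|_{H^r(\R)}\lesssim\|G\|_{H^r(\R)}\]
with a constant independent of $T$. Taking the infimum over $G$ then gives \eqref{eq:boundary-extension}. Writing $\mathbf 1_{(0,T)}=H(t)-H(t-T)$, where $H$ is the Heaviside function, and using translation invariance of the $H^r(\R)$ norm, it is enough to show
\[\|HG\|_{H^r(\R)}\lesssim\|G\|_{H^r(\R)}\qquad\text{for }0\le r<\tfrac12.\]
Since $T$ has disappeared at this point, uniformity in $T$ is automatic.

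\textbf{Step 2: boundedness of the Heaviside multiplier (the main obstacle).} Use the Gagliardo seminorm,
\[\|F\|_{H^r}^2\sim\|F\|_{L^2}^2+\iint\frac{|F(t)-F(t')|^2}{|t-t'|^{1+2r}}\,dt\,dt'.\]
Split the double integral for $F=HG$ into the region where $t,t'>0$ and the cross region where $t>0>t'$. The first region is bounded by the seminorm of $G$. On the cross region the integrand is $|G(t)|^2|t-t'|^{-1-2r}$; integrating in $t'<0$ gives $c_r|G(t)|^2t^{-2r}$. It therefore remains to prove the fractional Hardy inequality
\[\int_0^\infty\frac{|G(t)|^2}{t^{2r}}\,dt\lesssim\|G\|_{H^r(\R)}^2,\qquad 0<r<\tfrac12.\]
This is the only genuinely delicate step. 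I would prove it via the Fourier side and the bound $\||t|^{-r}G\|_{L^2}\lesssim\||\zeta|^rG\|_{L^2}$ (Pitt/Stein--Weiss), or by the classical Hardy inequality for $r<\frac12$. The restriction $r<\frac12$ is essential, since the estimate fails at $r=\frac12$.

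\textbf{Step 3: the case $-\frac12<r<0$ by duality.} Write $H^r(0,T)$ as the dual of $H^{-r}_0(0,T)$ under the $L^2$ pairing. Here we use that $H^{-r}(0,T)=H^{-r}_0(0,T)$ with comparable norms uniformly in $T$ when $0<-r<\frac12$, which follows from Steps 1--2 since the zero extension gives the isomorphism. For $g\in H^r(0,T)$ and $\phi\in H^{-r}(\R)$ we have
\[\langle\mathcal E_Tg,\phi\rangle=\langle g,\phi|_{(0,T)}\rangle.\]
Moreover $\phi|_{(0,T)}$ has zero extension $\mathbf 1_{(0,T)}\phi$, whose $H^{-r}(\R)$ norm is $\lesssim\|\phi\|_{H^{-r}}$ by Step 2. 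Hence
\[|\langle\mathcal E_Tg,\phi\rangle|\lesssim\|g\|_{H^r(0,T)}\|\phi\|_{H^{-r}(\R)},\]
and taking the supremum over $\phi$ gives \eqref{eq:boundary-extension}. The constants are again independent of $T$, because they come only from the $T$-independent multiplier bound of Step 2.
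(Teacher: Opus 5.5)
Your argument is correct, and its skeleton is the paper's. You write $\mathcal E_Tg=\mathbf 1_{(0,T)}G$ for an arbitrary extension $G$, split $\mathbf 1_{(0,T)}=\mathbf 1_{(0,\infty)}-\mathbf 1_{(T,\infty)}$, use translation invariance, and take the infimum over $G$.

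The difference is in how the half-line multiplier bound is obtained. The paper simply cites Strichartz's theorem that $\mathbf 1_{(0,\infty)}$ is a bounded multiplier on $H^r(\R)$ for the whole range $|r|<\frac12$, so one argument covers both signs of $r$. You instead prove the bound for $0<r<\frac12$ via the Gagliardo seminorm and the fractional Hardy (Pitt) inequality. This makes the proof self-contained and shows exactly where $r<\frac12$ is used.

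You then handle $-\frac12<r<0$ by dualizing the spaces, $H^r(0,T)=(H^{-r}_0(0,T))^*$. This works, but it needs the identification $H^{-r}(0,T)=H^{-r}_0(0,T)$. Steps 1--2 give the $T$-uniform comparability of norms, but density of $C_0^\infty(0,T)$ needs a short extra cutoff-and-mollify argument, which is standard in this range.

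For the estimate alone, a shorter route is to dualize only the multiplier. Multiplication by the real function $\mathbf 1_{(0,T)}$ is symmetric for the $L^2$ pairing, so its boundedness on $H^{-r}(\R)$ gives boundedness on $H^r(\R)$ with the same constant. Your Step 1 then runs verbatim for $r<0$.

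Your formulation does have one merit: it makes explicit what the zero extension of a distribution $g\in H^r(0,T)$ means when $r<0$. The paper leaves this implicit, and the independence of $\mathbf 1_{(0,T)}G$ from the chosen extension $G$ rests on the same density fact.
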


\begin{proof}
Let $G\in H^r(\R)$ be an extension of $g$. For $-\frac12<r<\frac12$, multiplication by the characteristic function of a half line is bounded on $H^r(\R)$, see for instance \cite{Strichartz1967}. By translation invariance, the multiplier norm is independent of the endpoint. Since
\[\mathbf{1}_{(0,T)}=\mathbf{1}_{(0,\infty)}-\mathbf{1}_{(T,\infty)},\]
we obtain
\[\|\mathcal E_Tg\|_{H^r(\R)}=\|\mathbf{1}_{(0,T)}G\|_{H^r(\R)}\lesssim_r\|G\|_{H^r(\R)},\]
with a constant independent of $T$. Taking the infimum over all extensions $G$ proves \eqref{eq:boundary-extension}.
\end{proof}

\begin{lemma}\label{lem:ray-integral}
Let $a\in\C$ with $\im a>0$. If $\sigma\ge0$ and $\bra{k}^\sigma q\in L^2(\R^+)$, then
\[\left\|\int_0^\infty e^{iakx}q(k)\,dk\right\|_{H^\sigma(\R^+)}\lesssim_{\sigma,a}\|\bra{k}^\sigma q\|_{L^2(\R^+)}.\]
The same estimate holds when the $k$ integration is restricted to any measurable subset of $\R^+$.
\end{lemma}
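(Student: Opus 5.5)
The plan is to handle the case $\sigma=0$ first and then reduce general $\sigma\ge0$ to it. Write $a=|a|e^{i\theta}$ with $0<\theta<\pi$. For $x>0$ and $k>0$ we have
\[|e^{iakx}|=e^{-kx\,\im a},\]
so the operator
\[Tq(x):=\int_0^\infty e^{iakx}q(k)\,dk\]
has the kernel $K(x,k)=e^{iakx}$, which satisfies $|K(x,k)|=e^{-ckx}$ with $c=\im a>0$. This kernel is homogeneous of degree $-1$ in the sense that it depends only on $kx$, so I will run the Schur test with weights $k^{-1/2}$ and $x^{-1/2}$, which is the classical argument for boundedness of the Laplace transform on $L^2(\R^+)$. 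Concretely,
\[\int_0^\infty e^{-ckx}k^{-1/2}\,dk=C_c\,x^{-1/2},\qquad \int_0^\infty e^{-ckx}x^{-1/2}\,dx=C_c\,k^{-1/2},\]
with $C_c=\Gamma(1/2)c^{-1/2}$. Schur's test then gives
\[\|Tq\|_{L^2(\R^+)}\le \frac{\pi}{c}\,\|q\|_{L^2(\R^+)}.\]
Equivalently, $|Tq|\le \mathcal L|q|(c\,\cdot)$, where $\mathcal L$ is the Laplace transform, and $\|\mathcal L\|_{L^2\to L^2}=\sqrt\pi$. Restricting the $k$ integration to a measurable subset $E\subset\R^+$ amounts to replacing $q$ by $\mathbf 1_E q$, so that case is immediate.

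Next take $\sigma=m\in\N$. On $x>0$ I differentiate under the integral sign; this is justified by the exponential decay whenever $k^m q\in L^2$ and $x>0$. Then
\[\partial_x^m Tq(x)=\int_0^\infty e^{iakx}(iak)^m q(k)\,dk=T\big((iak)^mq\big)(x).\]
By the $L^2$ bound this has norm at most $C|a|^m\|k^mq\|_{L^2}$. Summing over $0\le\ell\le m$ controls $\|Tq\|_{H^m(\R^+)}$ by $\|\bra{k}^mq\|_{L^2}$. Here I use the standard fact that the restriction $H^m(\R^+)$ norm is equivalent to $\sum_{\ell\le m}\|\partial_x^\ell\cdot\|_{L^2(\R^+)}$, which can be seen through a Stein or reflection extension operator.

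For noninteger $\sigma$ I will use complex interpolation. The map $q\mapsto Tq$ is bounded from the weighted space $L^2(\bra{k}^{2m}dk)$ into $H^m(\R^+)$ for every integer $m$, with a uniform linear operator. Weighted $L^2$ spaces interpolate to $L^2(\bra{k}^{2\sigma}dk)$ by Stein--Weiss, and the restriction spaces $H^m(\R^+)$ interpolate to $H^\sigma(\R^+)$ because a universal extension operator for $\R^+$ exists. Together these give the estimate for all $\sigma\ge0$. The constant depends only on $\sigma$ and on $a$, through $|a|$ and $\im a$.

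I expect the main obstacle to be the justification in the last two steps: the identification of the interpolation spaces $[H^{m},H^{m+1}]_\theta(\R^+)=H^{m+\theta}(\R^+)$, and the differentiation under the integral near $x=0$. The interpolation identity is standard via a universal extension operator, for example the Seeley extension. The behavior near $x=0$ only needs to hold in the $L^2$ sense, and this follows by density from $q\in C_c^\infty(0,\infty)$. Everything else reduces to the Schur test computation above.
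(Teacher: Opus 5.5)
Your proposal is correct and is essentially the paper's argument: an $L^2$ bound for the Laplace-type operator by Schur's test with weights $k^{-1/2}$, together with the domination $|Tq|\le\mathcal L|q|(c\,\cdot)$, then differentiation for integer $\sigma$ and complex interpolation between consecutive integers. The paper runs Schur on the Gram kernel $1/(k+\ell)$ of $\|Lq\|^2$ rather than directly on $e^{-ckx}$, and your interpolation step (Stein--Weiss, extension operator) is more carefully justified than the paper's. The only slip is the constant: Schur's test with $C_1=C_2=\Gamma(\frac12)c^{-1/2}$ gives $\|Tq\|_{L^2(\R^+)}\le\sqrt{\pi/c}\,\|q\|_{L^2(\R^+)}$, not $\frac{\pi}{c}\|q\|_{L^2(\R^+)}$, which is false for $c>\pi$ (e.g.\ $a=ic$); this is immaterial for an estimate with an implicit constant.
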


\begin{proof}
Write
\[a=\re a+i\im a,\qquad \beta:=\im a>0.\]
We first prove the $L^2$ estimate. For $q\in C_0^\infty(\R^+)$, set
\[Lq(x):=\int_0^\infty e^{-kx}q(k)\,dk.\]
A direct computation gives
\[\begin{aligned}\|Lq\|_{L^2(\R^+)}^2=&~{}\int_0^\infty\int_0^\infty\frac{q(k)\overline{q(\ell)}}{k+\ell}\,dk\,d\ell\\
=&~{}\int_0^\infty\left(\int_0^\infty\frac{q(k)}{k+\ell}\,dk\right)\overline{q(\ell)}\,d\ell\\
\le&~{}\left(\int_0^\infty\left|\int_0^\infty\frac{q(k)}{k+\ell}\,dk\right|^2d\ell\right)^{\frac12}\|q\|_{L^2(\R^+)}.
\end{aligned}\]
Note that
\[\int_0^\infty\frac{k^{-\frac12}}{k+\ell}\,dk=\ell^{-\frac12}\int_0^\infty\frac{y^{-\frac12}}{1+y}\,dy=\pi\ell^{-\frac12}.\]
Since
\[\begin{aligned}\left|\int_0^\infty\frac{q(k)}{k+\ell}\,dk\right|^2\le&~{}\left(\int_0^\infty\frac{k^{-\frac12}}{(k+\ell)\ell^{-\frac12}}\,dk\right)\left(\int_0^\infty\frac{|q(k)|^2\ell^{-\frac12}}{(k+\ell)k^{-\frac12}}\,dk\right)\\
=&~{}\pi\int_0^\infty\frac{|q(k)|^2\ell^{-\frac12}}{(k+\ell)k^{-\frac12}}\,dk,
\end{aligned}\]
we obtain
\[\begin{aligned}\int_0^\infty\left|\int_0^\infty\frac{q(k)}{k+\ell}\,dk\right|^2d\ell\le&~{}\pi\int_0^\infty\int_0^\infty\frac{|q(k)|^2\ell^{-\frac12}}{(k+\ell)k^{-\frac12}}\,dk\,d\ell\\
=&~{}\pi\int_0^\infty\frac{|q(k)|^2}{k^{-\frac12}}\left(\int_0^\infty\frac{\ell^{-\frac12}}{k+\ell}\,d\ell\right)dk\\
=&~{}\pi^2\int_0^\infty|q(k)|^2\,dk.
\end{aligned}\]
It follows that
\[\|Lq\|_{L^2(\R^+)}\lesssim\|q\|_{L^2(\R^+)}.\]
By density, the same estimate holds for every $q\in L^2(\R^+)$.

Since
\[\left|\int_0^\infty e^{iakx}q(k)\,dk\right|\le\int_0^\infty e^{-\beta kx}|q(k)|\,dk,\]
the change of variables $y=\beta x$ and the estimate above yield
\[\left\|\int_0^\infty e^{iakx}q(k)\,dk\right\|_{L^2(\R^+)}\lesssim_a\|q\|_{L^2(\R^+)}.\]

Let $N\in\N$. For every $0\le\nu\le N$,
\[\partial_x^\nu\int_0^\infty e^{iakx}q(k)\,dk=(ia)^\nu\int_0^\infty e^{iakx}k^\nu q(k)\,dk.\]
Applying the $L^2$ estimate to each derivative, we obtain
\begin{equation}\label{eq:integer-derivative}
\left\|\int_0^\infty e^{iakx}q(k)\,dk\right\|_{H^N(\R^+)}
\lesssim_{N,a}\sum_{\nu=0}^N\|k^\nu q(k)\|_{L^2(\R^+)}
\lesssim_{N,a}\|\bra{k}^Nq(k)\|_{L^2(\R^+)}.
\end{equation}
For general $\sigma\ge0$, choose $N\in\N_0$ and $0\le\theta<1$ such that
\[\sigma=N+\theta.\]
If $\theta=0$, the desired estimate is exactly \eqref{eq:integer-derivative}. If $0<\theta<1$, applying the complex interpolation theorem to \eqref{eq:integer-derivative} for the consecutive integers $N$ and $N+1$, we conclude
\[\left\|\int_0^\infty e^{iakx}q(k)\,dk\right\|_{H^\sigma(\R^+)}
\lesssim_{\sigma,a}\|\bra{k}^\sigma q(k)\|_{L^2(\R^+)}.\]
\end{proof}

\subsection{The linear IBVP with Robin boundary conditions}\label{sec:linear-robin}
For the linear estimates and the fixed point argument, we consider
\begin{equation}\label{eq:linear-robin}
\left\{
\begin{aligned}
\partial_tu+(-1)^{j+1}\partial_x^{2j+1}u=&~{}f(t,x),&&0<t<T,\ x>0,\\
u(0,x)=&~{}u_0(x),&&x>0,\\
(\partial_x+\gamma)\partial_x^{\ell-1}u(t,0)=&~{}\varphi_\ell(t),&&0<t<T,\ 1\le\ell\le j.
\end{aligned}
\right.
\end{equation}
The corresponding UTM solution operator, which will be derived in Section \ref{sec:UTM}, is denoted by
\[S_j[u_0,\bphi,f],\qquad \bphi=(\varphi_1,\ldots,\varphi_j).\]
Thus a solution of the nonlinear problem \eqref{mainequation} is constructed as a fixed point of
\[u=S_j\left[u_0,\bphi,-\frac12\partial_x(u^2)\right].\]

For the Fourier transforms on the half line, we write
\[\widehat{u_0}(k):=\int_0^\infty e^{-ikx}u_0(x)\,dx,\qquad \widehat f(t,k):=\int_0^\infty e^{-ikx}f(t,x)\,dx,\qquad \im k\le0.\]
We also define
\begin{equation}\label{eq:F}
F(t,k):=\int_0^t e^{-ik^{2j+1}\tau}\widehat f(\tau,k)\,d\tau
\end{equation}
and
\[\widetilde\varphi_\ell(t,\zeta):=\int_0^t e^{-i\zeta\tau}\varphi_\ell(\tau)\,d\tau,\qquad 1\le\ell\le j.\]
Let
\[\omega:=e^{i2\pi/(2j+1)}.\]

For $1\le\kappa\le2j+1$, define
\[D_\kappa^+:=\left\{re^{i\theta}:r>0,\ \frac{(\kappa-1)\pi}{2j+1}<\theta<\frac{\kappa\pi}{2j+1}\right\}.\]
We then set
\[D^+:=\bigcup_{p=1}^jD_{2p}^+.\]
The boundary $\partial D^+$ is the union of the positively oriented boundaries $\partial D_{2p}^+$.

The sectors $D_\kappa^+$ and the domain $D^+$ are illustrated in Figure \ref{fig:domains}. Since
\[\frac{j\pi}{2j+1}<\frac{\pi}{2}<\frac{(j+1)\pi}{2j+1},\]
the positive imaginary axis lies inside the sector $D_{j+1}^+$, whose index $j+1$ is even exactly when $j$ is odd. Hence the possible Robin pole $k=i\gamma$ belongs to $D^+$ precisely when $j$ is odd and $\gamma>0$. If $j$ is even or $\gamma<0$, the pole lies outside $D^+$, while for $\gamma=0$ the apparent singularity at the origin is removable. This distinction will be relevant in the derivation of the UTM representation in Section \ref{sec:UTM}.

\begin{figure}[H]
    \centering
    \begin{subfigure}[t]{0.43\textwidth}
        \centering
        \resizebox{\textwidth}{!}{
        \begin{tikzpicture}[x=0.75pt,y=0.75pt,yscale=-1,xscale=1]
        % uncomment if require: \path (0,258);

        % Shape: Polygon
        \draw [color={rgb,255:red,255; green,255; blue,255},draw opacity=1]
        [fill={rgb,255:red,232; green,249; blue,212},fill opacity=1]
        (5,44.6) -- (9.86,26.13) -- (18.36,7.67) -- (75.42,-4.5) -- (203.52,165.87) -- (5,65.58) -- cycle;

        \draw [color={rgb,255:red,255; green,255; blue,255},draw opacity=1]
        [fill={rgb,255:red,232; green,249; blue,212},fill opacity=1]
        (166.49,-0.72) -- (193.2,-0.72) -- (216.87,0.96) -- (239.34,-2.4) -- (203.52,165.87) -- cycle;

        \draw [color={rgb,255:red,255; green,255; blue,255},draw opacity=1]
        [fill={rgb,255:red,232; green,249; blue,212},fill opacity=1]
        (331.01,-3.24) -- (383.82,13.54) -- (395.36,64.74) -- (203.52,165.87) -- cycle;

        \draw [line width=3,line join=round,line cap=round]
        (203.52,82.78) .. controls (203.52,82.78) and (203.52,82.78) .. (203.52,82.78);

        \draw (237.51,149.5) .. controls (242.98,154.12) and (242.06,163.3) .. (238.73,165.45);

        \draw [line width=0.75,line join=round,line cap=round]
        (252.08,33.69) .. controls (252.08,33.69) and (252.08,33.69) .. (252.08,33.69);
        \draw [line width=0.75,line join=round,line cap=round]
        (255.12,37.88) .. controls (255.12,37.88) and (255.12,37.88) .. (255.12,37.88);
        \draw [line width=0.75,line join=round,line cap=round]
        (258.16,42.08) .. controls (258.16,42.08) and (258.16,42.08) .. (258.16,42.08);
        \draw [line width=0.75,line join=round,line cap=round]
        (261.19,46.27) .. controls (261.19,46.27) and (261.19,46.27) .. (261.19,46.27);

        \draw [line width=0.75,line join=round,line cap=round]
        (139.17,45.44) .. controls (139.17,45.44) and (139.17,45.44) .. (139.17,45.44);
        \draw [line width=0.75,line join=round,line cap=round]
        (141.59,41.24) .. controls (141.59,41.24) and (141.59,41.24) .. (141.59,41.24);
        \draw [line width=0.75,line join=round,line cap=round]
        (144.63,37.04) .. controls (144.63,37.04) and (144.63,37.04) .. (144.63,37.04);
        \draw [line width=0.75,line join=round,line cap=round]
        (147.06,33.69) .. controls (147.06,33.69) and (147.06,33.69) .. (147.06,33.69);

        % Axes
        \draw [line width=0.75] (203.53,1.02) -- (204.12,240.5);
        \draw [shift={(204.12,243.5)},rotate=269.86]
        [fill={rgb,255:red,0; green,0; blue,0},line width=0.08,draw opacity=0]
        (10.72,-5.15) -- (0,0) -- (10.72,5.15) -- (7.12,0) -- cycle;
        \draw [shift={(203.52,-1.98)},rotate=89.86]
        [fill={rgb,255:red,0; green,0; blue,0},line width=0.08,draw opacity=0]
        (10.72,-5.15) -- (0,0) -- (10.72,5.15) -- (7.12,0) -- cycle;

        \draw [line width=0.75] (18.32,165.45) -- (394.79,165.45);
        \draw [shift={(397.79,165.45)},rotate=180]
        [fill={rgb,255:red,0; green,0; blue,0},line width=0.08,draw opacity=0]
        (10.72,-5.15) -- (0,0) -- (10.72,5.15) -- (7.12,0) -- cycle;
        \draw [shift={(15.32,165.45)},rotate=0]
        [fill={rgb,255:red,0; green,0; blue,0},line width=0.08,draw opacity=0]
        (10.72,-5.15) -- (0,0) -- (10.72,5.15) -- (7.12,0) -- cycle;

        % Dashed rays
        \draw [line width=0.75,dash pattern={on 4.5pt off 4.5pt}] (399,65.16) -- (203.52,165.87);
        \draw [shift={(307.48,112.31)},rotate=152.74]
        [color={rgb,255:red,0; green,0; blue,0},line width=0.75]
        (10.93,-3.29) .. controls (6.95,-1.4) and (3.31,-0.3) .. (0,0)
        .. controls (3.31,0.3) and (6.95,1.4) .. (10.93,3.29);

        \draw [line width=0.75,dash pattern={on 4.5pt off 4.5pt}] (331.01,-3.24) -- (203.52,165.87);
        \draw [shift={(263.65,86.1)},rotate=307.01]
        [color={rgb,255:red,0; green,0; blue,0},line width=0.75]
        (10.93,-3.29) .. controls (6.95,-1.4) and (3.31,-0.3) .. (0,0)
        .. controls (3.31,0.3) and (6.95,1.4) .. (10.93,3.29);

        \draw [line width=0.75,dash pattern={on 4.5pt off 4.5pt}] (239.34,-2.4) -- (203.52,165.87);
        \draw [shift={(222.88,74.89)},rotate=102.02]
        [color={rgb,255:red,0; green,0; blue,0},line width=0.75]
        (10.93,-3.29) .. controls (6.95,-1.4) and (3.31,-0.3) .. (0,0)
        .. controls (3.31,0.3) and (6.95,1.4) .. (10.93,3.29);

        \draw [line width=0.75,dash pattern={on 4.5pt off 4.5pt}] (78.46,-1.56) -- (203.52,165.87);
        \draw [shift={(136.8,76.54)},rotate=53.24]
        [color={rgb,255:red,0; green,0; blue,0},line width=0.75]
        (10.93,-3.29) .. controls (6.95,-1.4) and (3.31,-0.3) .. (0,0)
        .. controls (3.31,0.3) and (6.95,1.4) .. (10.93,3.29);

        \draw [line width=0.75,dash pattern={on 4.5pt off 4.5pt}] (166.49,-0.72) -- (203.52,165.87);
        \draw [shift={(186.3,88.43)},rotate=257.47]
        [color={rgb,255:red,0; green,0; blue,0},line width=0.75]
        (10.93,-3.29) .. controls (6.95,-1.4) and (3.31,-0.3) .. (0,0)
        .. controls (3.31,0.3) and (6.95,1.4) .. (10.93,3.29);

        \draw [line width=0.75,dash pattern={on 4.5pt off 4.5pt}] (12.29,68.52) -- (203.52,165.87);
        \draw [shift={(113.25,119.91)},rotate=206.98]
        [color={rgb,255:red,0; green,0; blue,0},line width=0.75]
        (10.93,-3.29) .. controls (6.95,-1.4) and (3.31,-0.3) .. (0,0)
        .. controls (3.31,0.3) and (6.95,1.4) .. (10.93,3.29);

        % Labels
        \draw (74.03,64.54) node [anchor=north west,inner sep=0.75pt,font=\footnotesize,xscale=0.95,yscale=0.95] {$D_{2j}^{+}$};
        \draw (54.21,124.3) node [anchor=north west,inner sep=0.75pt,font=\footnotesize,xscale=0.95,yscale=0.95] {$D_{2j+1}^{+}$};
        \draw (206.61,66.76) node [anchor=north west,inner sep=0.75pt,font=\footnotesize,xscale=0.95,yscale=0.95] {$i\gamma$};
        \draw (247.69,146.78) node [anchor=north west,inner sep=0.75pt,font=\tiny,xscale=0.95,yscale=0.95] {$\frac{\pi}{2j+1}$};
        \draw (342.11,169.96) node [anchor=north west,inner sep=0.75pt,font=\large,xscale=0.95,yscale=0.95] {$D^{+}$};
        \draw (319.42,125.48) node [anchor=north west,inner sep=0.75pt,font=\footnotesize,xscale=0.95,yscale=0.95] {$D_{1}^{+}$};
        \draw (298.78,64.21) node [anchor=north west,inner sep=0.75pt,font=\footnotesize,xscale=0.95,yscale=0.95] {$D_{2}^{+}$};
        \draw (173.2,7.83) node [anchor=north west,inner sep=0.75pt,font=\footnotesize,xscale=0.95,yscale=0.95] {$D_{j+1}^{+}$};
        \end{tikzpicture}
        }
        \caption{Domains for $j$ odd, $\gamma>0$.}
        \label{j-odd}
    \end{subfigure}
    \qquad
    \begin{subfigure}[t]{0.43\textwidth}
        \centering
        \resizebox{\textwidth}{!}{
        \begin{tikzpicture}[x=0.75pt,y=0.75pt,yscale=-1,xscale=1]
        % uncomment if require: \path (0,270);

        % Shape: Polygon
        \draw [color={rgb,255:red,255; green,255; blue,255},draw opacity=1]
        [fill={rgb,255:red,212; green,237; blue,249},fill opacity=1]
        (-15.71,102.4) -- (-7.29,37.66) -- (-18,-5.5) -- (195.55,180.18) -- (194.51,180.18) -- cycle;

        \draw [color={rgb,255:red,255; green,255; blue,255},draw opacity=1]
        [fill={rgb,255:red,212; green,237; blue,249},fill opacity=1]
        (94.66,2.14) -- (142.59,-1.9) -- (163.24,-5.5) -- (196.6,180.18) -- (195.55,180.18) -- cycle;

        \draw [color={rgb,255:red,255; green,255; blue,255},draw opacity=1]
        [fill={rgb,255:red,212; green,237; blue,249},fill opacity=1]
        (225.94,-2.8) -- (265.71,-1.9) -- (293.89,3.94) -- (197.65,180.18) -- (196.6,180.18) -- cycle;

        \draw [color={rgb,255:red,255; green,255; blue,255},draw opacity=1]
        [fill={rgb,255:red,212; green,237; blue,249},fill opacity=1]
        (384.88,15.63) -- (398,59.24) -- (394.18,105.55) -- (196.6,180.18) -- (195.55,180.18) -- cycle;

        % Axes
        \draw [line width=0.75] (194.95,3.34) -- (195.6,259);
        \draw [shift={(195.6,262)},rotate=269.85]
        [fill={rgb,255:red,0; green,0; blue,0},line width=0.08,draw opacity=0]
        (10.72,-5.15) -- (0,0) -- (10.72,5.15) -- (7.12,0) -- cycle;
        \draw [shift={(194.94,0.34)},rotate=89.85]
        [fill={rgb,255:red,0; green,0; blue,0},line width=0.08,draw opacity=0]
        (10.72,-5.15) -- (0,0) -- (10.72,5.15) -- (7.12,0) -- cycle;

        \draw [line width=0.75] (17.96,180.18) -- (375.24,180.18);
        \draw [shift={(378.24,180.18)},rotate=180]
        [fill={rgb,255:red,0; green,0; blue,0},line width=0.08,draw opacity=0]
        (10.72,-5.15) -- (0,0) -- (10.72,5.15) -- (7.12,0) -- cycle;
        \draw [shift={(14.96,180.18)},rotate=0]
        [fill={rgb,255:red,0; green,0; blue,0},line width=0.08,draw opacity=0]
        (10.72,-5.15) -- (0,0) -- (10.72,5.15) -- (7.12,0) -- cycle;

        % Dashed rays
        \draw [line width=0.75,dash pattern={on 4.5pt off 4.5pt}] (394.18,105.55) -- (196.6,180.18);
        \draw [shift={(301.94,140.39)},rotate=159.31]
        [color={rgb,255:red,0; green,0; blue,0},line width=0.75]
        (10.93,-3.29) .. controls (6.95,-1.4) and (3.31,-0.3) .. (0,0)
        .. controls (3.31,0.3) and (6.95,1.4) .. (10.93,3.29);

        \draw [line width=0.75,dash pattern={on 4.5pt off 4.5pt}] (293.89,3.94) -- (196.6,180.18);
        \draw [shift={(248.63,85.93)},rotate=118.9]
        [color={rgb,255:red,0; green,0; blue,0},line width=0.75]
        (10.93,-3.29) .. controls (6.95,-1.4) and (3.31,-0.3) .. (0,0)
        .. controls (3.31,0.3) and (6.95,1.4) .. (10.93,3.29);

        \draw [line width=0.75,dash pattern={on 4.5pt off 4.5pt}] (164.39,3.04) -- (196.6,180.18);
        \draw [shift={(179.24,84.72)},rotate=79.69]
        [color={rgb,255:red,0; green,0; blue,0},line width=0.75]
        (10.93,-3.29) .. controls (6.95,-1.4) and (3.31,-0.3) .. (0,0)
        .. controls (3.31,0.3) and (6.95,1.4) .. (10.93,3.29);

        \draw [line width=0.75,dash pattern={on 4.5pt off 4.5pt}] (1.02,12.03) -- (196.6,180.18);
        \draw [shift={(93.5,91.54)},rotate=40.69]
        [color={rgb,255:red,0; green,0; blue,0},line width=0.75]
        (10.93,-3.29) .. controls (6.95,-1.4) and (3.31,-0.3) .. (0,0)
        .. controls (3.31,0.3) and (6.95,1.4) .. (10.93,3.29);

        \draw [line width=0.75,dash pattern={on 4.5pt off 4.5pt}] (384.88,15.63) -- (196.6,180.18);
        \draw [shift={(286.22,101.85)},rotate=318.85]
        [color={rgb,255:red,0; green,0; blue,0},line width=0.75]
        (10.93,-3.29) .. controls (6.95,-1.4) and (3.31,-0.3) .. (0,0)
        .. controls (3.31,0.3) and (6.95,1.4) .. (10.93,3.29);

        \draw [line width=0.75,dash pattern={on 4.5pt off 4.5pt}] (225.49,3.04) -- (196.6,180.18);
        \draw [shift={(210.08,97.53)},rotate=279.26]
        [color={rgb,255:red,0; green,0; blue,0},line width=0.75]
        (10.93,-3.29) .. controls (6.95,-1.4) and (3.31,-0.3) .. (0,0)
        .. controls (3.31,0.3) and (6.95,1.4) .. (10.93,3.29);

        \draw [line width=0.75,dash pattern={on 4.5pt off 4.5pt}] (94.66,2.14) -- (196.6,180.18);
        \draw [shift={(148.61,96.37)},rotate=240.2]
        [color={rgb,255:red,0; green,0; blue,0},line width=0.75]
        (10.93,-3.29) .. controls (6.95,-1.4) and (3.31,-0.3) .. (0,0)
        .. controls (3.31,0.3) and (6.95,1.4) .. (10.93,3.29);

        \draw [line width=0.75,dash pattern={on 4.5pt off 4.5pt}] (0.35,107.34) -- (196.6,180.18);
        \draw [shift={(104.1,145.85)},rotate=200.36]
        [color={rgb,255:red,0; green,0; blue,0},line width=0.75]
        (10.93,-3.29) .. controls (6.95,-1.4) and (3.31,-0.3) .. (0,0)
        .. controls (3.31,0.3) and (6.95,1.4) .. (10.93,3.29);

        \draw (248.88,162.19) .. controls (255.76,167.14) and (254.6,176.98) .. (250.41,179.28);

        \draw [line width=0.75,line join=round,line cap=round]
        (287.12,52.5) .. controls (287.12,52.5) and (287.12,52.5) .. (287.12,52.5);
        \draw [line width=0.75,line join=round,line cap=round]
        (290.94,56.99) .. controls (290.94,56.99) and (290.94,56.99) .. (290.94,56.99);
        \draw [line width=0.75,line join=round,line cap=round]
        (294.76,61.49) .. controls (294.76,61.49) and (294.76,61.49) .. (294.76,61.49);
        \draw [line width=0.75,line join=round,line cap=round]
        (298.59,65.98) .. controls (298.59,65.98) and (298.59,65.98) .. (298.59,65.98);

        \draw [line width=0.75,line join=round,line cap=round]
        (91.35,61.49) .. controls (91.35,61.49) and (91.35,61.49) .. (91.35,61.49);
        \draw [line width=0.75,line join=round,line cap=round]
        (94.41,56.99) .. controls (94.41,56.99) and (94.41,56.99) .. (94.41,56.99);
        \draw [line width=0.75,line join=round,line cap=round]
        (98.24,52.5) .. controls (98.24,52.5) and (98.24,52.5) .. (98.24,52.5);
        \draw [line width=0.75,line join=round,line cap=round]
        (101.29,48.9) .. controls (101.29,48.9) and (101.29,48.9) .. (101.29,48.9);

        \draw [line width=3,line join=round,line cap=round]
        (195.35,61.04) .. controls (195.35,61.04) and (195.35,61.04) .. (195.35,61.04);

        % Labels
        \draw (259.76,160.79) node [anchor=north west,inner sep=0.75pt,font=\tiny,xscale=0.95,yscale=0.95] {$\frac{\pi}{2j+1}$};
        \draw (331,193.85) node [anchor=north west,inner sep=0.75pt,font=\large,xscale=0.95,yscale=0.95] {$D^{+}$};
        \draw (297.39,146.26) node [anchor=north west,inner sep=0.75pt,font=\footnotesize,xscale=0.95,yscale=0.95] {$D_{1}^{+}$};
        \draw (331.82,84.31) node [anchor=north west,inner sep=0.75pt,font=\footnotesize,xscale=0.95,yscale=0.95] {$D_{2}^{+}$};
        \draw (168.76,14.17) node [anchor=north west,inner sep=0.75pt,font=\footnotesize,xscale=0.95,yscale=0.95] {$D_{j+1}^{+}$};
        \draw (229.35,18.67) node [anchor=north west,inner sep=0.75pt,font=\footnotesize,xscale=0.95,yscale=0.95] {$D_{j}^{+}$};
        \draw (124.41,15.07) node [anchor=north west,inner sep=0.75pt,font=\footnotesize,xscale=0.95,yscale=0.95] {$D_{j+2}^{+}$};
        \draw (32.71,84.31) node [anchor=north west,inner sep=0.75pt,font=\footnotesize,xscale=0.95,yscale=0.95] {$D_{2j}^{+}$};
        \draw (19.71,148.15) node [anchor=north west,inner sep=0.75pt,font=\footnotesize,xscale=0.95,yscale=0.95] {$D_{2j+1}^{+}$};
        \draw (198.18,46.99) node [anchor=north west,inner sep=0.75pt,font=\footnotesize,xscale=0.95,yscale=0.95] {$i\gamma$};
        \end{tikzpicture}
        }
        \caption{Domains for $j$ even, $\gamma>0$.}
        \label{j-even}
    \end{subfigure}
    \caption{Sector decomposition of the upper half plane for odd and even values of $j$. The shaded sectors constitute $D^+$, and the marked point is the possible Robin pole $k=i\gamma$ for $\gamma>0$.}
    \label{fig:domains}
\end{figure}

\section{Derivation of the UTM Representation}\label{sec:UTM}
We first derive the UTM representation for sufficiently smooth and decaying data and solutions, so that all boundary traces and differentiations below are justified. We then verify that the resulting integral formula satisfies the linear IBVP with Robin boundary conditions and thereby establish the equivalence of the differential and UTM formulations for smooth data. The resulting solution operator is extended to the low regularity setting by the estimates proved in the subsequent sections. For general background on the unified transform method, we refer to \cite{Fokas1997,DeconinckTrogdonVasan,Fokas2008}.

\subsection{Global relation and contour representation}

Consider
\begin{equation}\label{eq:UTM-1}
\partial_tu+(-1)^{j+1}\partial_x^{2j+1}u=f
\end{equation}
on $(0,T)\times\R^+$, together with
\begin{equation}\label{eq:Robin-hierarchy}
(\partial_x+\gamma)\partial_x^{\ell-1}u(t,0)=\varphi_\ell(t),\qquad 1\le\ell\le j.
\end{equation}
For $\ell\ge0$ and $\lambda\in\C$, define
\begin{equation}\label{eq:g_ell}
\widetilde g_\ell(t,\lambda):=\int_0^t e^{-i\lambda\tau}\partial_x^\ell u(\tau,0)\,d\tau.
\end{equation}
For each fixed $t\in[0,T]$, the function $\widetilde g_\ell(t,\cdot)$ is entire. Moreover, \eqref{eq:Robin-hierarchy} gives
\begin{equation}\label{eq:boundary-recursion}
\widetilde g_\ell(t,\lambda)+\gamma\widetilde g_{\ell-1}(t,\lambda)=\widetilde\varphi_\ell(t,\lambda),\qquad 1\le\ell\le j.
\end{equation}
Iterating \eqref{eq:boundary-recursion}, we obtain
\begin{equation}\label{eq:g-recursion}
\widetilde g_q(t,\lambda)=(-\gamma)^q\widetilde g_0(t,\lambda)+\sum_{\ell=1}^q(-\gamma)^{q-\ell}\widetilde\varphi_\ell(t,\lambda),\qquad 1\le q\le j.
\end{equation}

Taking the Fourier transform on the half line of \eqref{eq:UTM-1}, we obtain
\[\partial_t\widehat u(t,k)+(-1)^{j+1}\int_0^\infty e^{-ikx}\partial_x^{2j+1}u(t,x)\,dx=\widehat f(t,k).\]
Using the decay of $u$ at infinity, integration by parts gives
\[\int_0^\infty e^{-ikx}\partial_x^\ell u(t,x)\,dx=-\partial_x^{\ell-1}u(t,0)+ik\int_0^\infty e^{-ikx}\partial_x^{\ell-1}u(t,x)\,dx,\qquad 1\le\ell\le2j+1.\]
Applying this identity first with $\ell=2j+1$ and then with $\ell=2j$, we obtain
\[\begin{aligned}
\int_0^\infty e^{-ikx}\partial_x^{2j+1}u(t,x)\,dx=&~{}-\partial_x^{2j}u(t,0)+ik\int_0^\infty e^{-ikx}\partial_x^{2j}u(t,x)\,dx\\
=&~{}-\partial_x^{2j}u(t,0)-ik\partial_x^{2j-1}u(t,0)+(ik)^2\int_0^\infty e^{-ikx}\partial_x^{2j-1}u(t,x)\,dx.
\end{aligned}\]
More generally, for $1\le r\le2j+1$, after $r$ iterations,
\[\int_0^\infty e^{-ikx}\partial_x^{2j+1}u(t,x)\,dx=-\sum_{m=0}^{r-1}(ik)^m\partial_x^{2j-m}u(t,0)+(ik)^r\int_0^\infty e^{-ikx}\partial_x^{2j+1-r}u(t,x)\,dx.\]
Taking $r=2j+1$, we obtain
\[\int_0^\infty e^{-ikx}\partial_x^{2j+1}u(t,x)\,dx=-\sum_{m=0}^{2j}(ik)^m\partial_x^{2j-m}u(t,0)+(ik)^{2j+1}\widehat u(t,k).\]
Therefore,
\begin{equation}\label{e1}
\partial_t\widehat u(t,k)=(-1)^{j+1}\sum_{m=0}^{2j}(ik)^m\partial_x^{2j-m}u(t,0)+ik^{2j+1}\widehat u(t,k)+\widehat f(t,k).
\end{equation}
Since
\[\partial_t\left(e^{-ik^{2j+1}t}\widehat u(t,k)\right)=e^{-ik^{2j+1}t}\left(\partial_t\widehat u(t,k)-ik^{2j+1}\widehat u(t,k)\right),\]
multiplying \eqref{e1} by $e^{-ik^{2j+1}t}$ and integrating in time from $0$ to $t$, we obtain
\begin{equation}\label{eq:raw-global-relation}
e^{-ik^{2j+1}t}\widehat u(t,k)-\widehat u_0(k)=(-1)^{j+1}\sum_{m=0}^{2j}(ik)^m\widetilde g_{2j-m}(t,k^{2j+1})+F(t,k),
\end{equation}
where $F$ is given by \eqref{eq:F}. Set
\begin{equation}\label{eq:g-boundary}
\widetilde g(t,k):=(-1)^{j+1}\sum_{m=0}^{2j}(ik)^m\widetilde g_{2j-m}(t,k^{2j+1}).
\end{equation}
Then \eqref{eq:raw-global-relation} becomes
\begin{equation}\label{eq:global-relation-preliminary}
e^{-ik^{2j+1}t}\widehat u(t,k)-\widehat u_0(k)=\widetilde g(t,k)+F(t,k).
\end{equation}
Applying the inverse Fourier transform gives
\begin{equation}\label{formula1}
u(t,x)=\frac{1}{2\pi}\int_{\R}e^{ikx+ik^{2j+1}t}\left(\widehat u_0(k)+\widetilde g(t,k)+F(t,k)\right)\,dk.
\end{equation}

We now turn to the boundary integral in \eqref{formula1}. Its contour can be deformed according to the sign of $\im (k^{2j+1})$ in the sectors $D_\kappa^+$. More precisely, for $0\le p\le j$,
\[\im (k^{2j+1})>0,\qquad k\in D_{2p+1}^+,\]
whereas, for $1\le p\le j$,
\begin{equation}\label{eq:negative imaginary k^2j+1}
\im (k^{2j+1})<0,\qquad k\in D_{2p}^+.
\end{equation}
The sector decomposition used here is illustrated in Figure \ref{fig:domains}.

By \eqref{eq:g_ell} and \eqref{eq:g-boundary},
\[\widetilde g(t,k)=\int_0^t e^{-ik^{2j+1}\tau}\mathcal B(\tau,k)\,d\tau,\]
where
\[\mathcal B(\tau,k):=(-1)^{j+1}\sum_{m=0}^{2j}(ik)^m\partial_x^{2j-m}u(\tau,0).\]
Consequently,
\begin{equation}\label{eq:g-integral-representation}
e^{ikx+ik^{2j+1}t}\widetilde g(t,k)=e^{ikx}\int_0^t e^{ik^{2j+1}(t-\tau)}\mathcal B(\tau,k)\,d\tau.
\end{equation}
For $k\in D_{2p+1}^+$, we have
\[\left|e^{ik^{2j+1}(t-\tau)}\right|\le1,\qquad 0\le\tau\le t.\]
Moreover, integration by parts in $\tau$ gives
\begin{equation}\label{eq:arc-decay}
\begin{aligned}
\int_0^t e^{ik^{2j+1}(t-\tau)}\mathcal B(\tau,k)\,d\tau=&~{}\frac{e^{ik^{2j+1}t}\mathcal B(0,k)-\mathcal B(t,k)}{ik^{2j+1}}\\
&~{}+\frac{1}{ik^{2j+1}}\int_0^t e^{ik^{2j+1}(t-\tau)}\partial_\tau\mathcal B(\tau,k)\,d\tau.
\end{aligned}
\end{equation}
By the smoothness assumption on $u$, the coefficients of $\mathcal B(\tau,k)$ and $\partial_\tau\mathcal B(\tau,k)$ are uniformly bounded for $\tau\in[0,T]$. Since both are polynomials in $k$ of degree at most $2j$, we have
\[|\mathcal B(\tau,k)|+|\partial_\tau\mathcal B(\tau,k)|\lesssim_T|k|^{2j}\]
for $|k|\ge1$ and $\tau\in[0,T]$. Hence \eqref{eq:arc-decay} gives
\begin{equation}\label{eq:boundary decay}
\left|\int_0^t e^{ik^{2j+1}(t-\tau)}\mathcal B(\tau,k)\,d\tau\right|\lesssim_T\frac{1}{|k|}
\end{equation}
for large $|k|$ in each closed odd sector.

We use the standard sectorial form of Jordan's lemma, see for instance \cite{DeconinckTrogdonVasan}. For $0\le p\le j$ and $R>0$, let
\[C_{p,R}:=\{k\in\overline{D_{2p+1}^+}:|k|=R\}.\]
By \eqref{eq:g-integral-representation} and \eqref{eq:boundary decay},
\[\lim_{R\to\infty}\int_{C_{p,R}}e^{ikx+ik^{2j+1}t}\widetilde g(t,k)\,dk=0,\qquad x>0.\]
Since $\widetilde g(t,\cdot)$ is entire, Cauchy's theorem in each truncated odd sector and passage to the limit $R\to\infty$ give
\begin{equation}\label{fokas-contour-deformation}
\int_{\R}e^{ikx+ik^{2j+1}t}\widetilde g(t,k)\,dk
=\int_{\partial D^+}e^{ikx+ik^{2j+1}t}\widetilde g(t,k)\,dk,\qquad x>0.
\end{equation}

We next replace $\widetilde g(t,k)$ by $\widetilde g(T,k)$ in the boundary integral. By \eqref{eq:g_ell} and \eqref{eq:g-boundary},
\begin{equation}\label{eq:g-t-T}
e^{ik^{2j+1}t}\left(\widetilde g(t,k)-\widetilde g(T,k)\right)=-\int_t^T e^{ik^{2j+1}(t-\tau)}\mathcal B(\tau,k)\,d\tau.
\end{equation}
For $k\in D_{2p}^+$, we have $\im (k^{2j+1})<0$ and $t-\tau\le0$, and hence
\[\left|e^{ik^{2j+1}(t-\tau)}\right|\le1,\qquad t\le\tau\le T.\]
Integration by parts gives
\[\begin{aligned}
-\int_t^T e^{ik^{2j+1}(t-\tau)}\mathcal B(\tau,k)\,d\tau=&~{}\frac{e^{ik^{2j+1}(t-T)}\mathcal B(T,k)-\mathcal B(t,k)}{ik^{2j+1}}\\
&~{}-\frac{1}{ik^{2j+1}}\int_t^T e^{ik^{2j+1}(t-\tau)}\partial_\tau\mathcal B(\tau,k)\,d\tau.
\end{aligned}\]
Thus the right-hand side of \eqref{eq:g-t-T} is $O(|k|^{-1})$ for large $|k|$ in each closed even sector. Applying the same large arc argument in each $D_{2p}^+$, $1\le p\le j$, gives
\begin{equation}\label{eq:g-t-T-vanishing}
\int_{\partial D^+}e^{ikx+ik^{2j+1}t}\left(\widetilde g(t,k)-\widetilde g(T,k)\right)\,dk=0.
\end{equation}
Combining \eqref{formula1}, \eqref{fokas-contour-deformation}, and \eqref{eq:g-t-T-vanishing}, we obtain
\begin{equation}\label{formula2}
u(t,x)=\frac{1}{2\pi}\int_{\R}e^{ikx+ik^{2j+1}t}\left(\widehat u_0(k)+F(t,k)\right)\,dk+\frac{1}{2\pi}\int_{\partial D^+}e^{ikx+ik^{2j+1}t}\widetilde g(T,k)\,dk.
\end{equation}

To determine $\widetilde g(T,k)$ in \eqref{formula2}, we rewrite $\widetilde g(t,k)$ for arbitrary $0\le t\le T$. For $0\le\ell\le j$, set
\begin{equation}\label{eq:b-ell}
\mathfrak b_\ell(k):=\sum_{q=\ell}^j(ik)^{2j-q}(-\gamma)^{q-\ell}.
\end{equation}
Using \eqref{eq:g-recursion} in \eqref{eq:g-boundary}, we obtain
\begin{equation}\label{e4}
\begin{aligned}
\widetilde g(t,k)=&~{}(-1)^{j+1}\sum_{m=0}^{j-1}(ik)^m\widetilde g_{2j-m}(t,k^{2j+1})\\
&~{}+(-1)^{j+1}\mathfrak b_0(k)\widetilde g_0(t,k^{2j+1})\\
&~{}+(-1)^{j+1}\sum_{\ell=1}^j\mathfrak b_\ell(k)\widetilde\varphi_\ell(t,k^{2j+1}).
\end{aligned}
\end{equation}
Consequently, the global relation \eqref{eq:global-relation-preliminary} can be written as
\begin{equation}\label{e3}
\begin{aligned}
e^{-ik^{2j+1}t}\widehat u(t,k)-\widehat u_0(k)=&~{}(-1)^{j+1}\sum_{m=0}^{j-1}(ik)^m\widetilde g_{2j-m}(t,k^{2j+1})\\
&~{}+(-1)^{j+1}\mathfrak b_0(k)\widetilde g_0(t,k^{2j+1})\\
&~{}+(-1)^{j+1}\sum_{\ell=1}^j\mathfrak b_\ell(k)\widetilde\varphi_\ell(t,k^{2j+1})+F(t,k).
\end{aligned}
\end{equation}

\subsection{Explicit resolution of the sectorwise Robin boundary systems}\label{sec:resolution}
We now eliminate the remaining unknown boundary terms in \eqref{e4}. Let
\[\omega:=e^{i2\pi/(2j+1)}.\]
For $1\le p\le j$ and $1\le n\le j+1$, define
\begin{equation}\label{eq:alpha-pn}
\alpha_{p,n}:=\omega^{j-p+n}.
\end{equation}

The principal claim in this subsection is as follows.

\begin{proposition}\label{prop:boundary-elimination}
Let $1\le p\le j$, $k\in D_{2p}^+$, and $k\neq i\gamma$. Then, for $0\le t\le T$,
\begin{equation}\label{formula3}
\begin{aligned}\widetilde g(t,k)=&~{}\sum_{n=1}^{j+1}e^{-ik^{2j+1}t}\widehat u(t,\alpha_{p,n}k)\mathcal P_{p,n}(k)\\
&~{}-\sum_{n=1}^{j+1}\left(\widehat u_0(\alpha_{p,n}k)+F(t,\alpha_{p,n}k)\right)\mathcal P_{p,n}(k)\\
&~{}+\sum_{\ell=1}^jc_{p,\ell}\frac{k^{2j+1-\ell}}{k-i\gamma}\widetilde\varphi_\ell(t,k^{2j+1}),
\end{aligned}
\end{equation}
where the coefficients $\mathcal P_{p,n}$ and $c_{p,\ell}$ will be defined below in \eqref{eq:P-pn} and \eqref{eq:c-p-ell}, respectively.
\end{proposition}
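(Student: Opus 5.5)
Fix $1\le p\le j$ and $k\in D_{2p}^+$ with $k\neq i\gamma$. The plan is to evaluate the global relation \eqref{e3} at the $j+1$ rotated points $\alpha_{p,n}k$. Since $(\alpha_{p,n}k)^{2j+1}=k^{2j+1}$, all time transforms $\widetilde g_\ell(t,k^{2j+1})$ and $\widetilde\varphi_\ell(t,k^{2j+1})$ are unchanged by the rotation. We get $j+1$ linear equations in the unknowns $X_{m+1}=(-1)^{j+1}\widetilde g_{2j-m}(t,k^{2j+1})$, $0\le m\le j-1$, and $X_{j+1}=(-1)^{j+1}\widetilde g_0(t,k^{2j+1})$. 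The $n$th equation has coefficients $(i\alpha_{p,n}k)^m$ and $\mathfrak b_0(\alpha_{p,n}k)$. Its right-hand side is
\[e^{-ik^{2j+1}t}\widehat u(t,\alpha_{p,n}k)-\widehat u_0(\alpha_{p,n}k)-F(t,\alpha_{p,n}k)-(-1)^{j+1}\sum_\ell\mathfrak b_\ell(\alpha_{p,n}k)\widetilde\varphi_\ell.\]
By \eqref{e4} with $z=1$, $\widetilde g(t,k)$ is exactly $\sum_m (ik)^m X_{m+1}+\mathfrak b_0(k)X_{j+1}+(-1)^{j+1}\sum_\ell \mathfrak b_\ell(k)\widetilde\varphi_\ell$. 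So we seek weights $\mathcal P_{p,n}(k)$ with
\[\sum_n\mathcal P_{p,n}(\alpha_{p,n}k)^m=k^m\quad(0\le m\le j-1),\qquad \sum_n\mathcal P_{p,n}\mathfrak b_0(\alpha_{p,n}k)=\mathfrak b_0(k).\]
Taking this combination of the $j+1$ equations reproduces the first two lines of \eqref{formula3}. The Robin term is then $(-1)^{j+1}\big(\mathfrak b_\ell(k)-\sum_n\mathcal P_{p,n}\mathfrak b_\ell(\alpha_{p,n}k)\big)\widetilde\varphi_\ell$.

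To construct the weights, write $z=\alpha_{p,n}$ and use the Lagrange basis $L_{p,n}$ of degree $j$ on the nodes $\alpha_{p,n}$. Let $Q_p(z)=\prod_n(z-\alpha_{p,n})$. We make the ansatz $\mathcal P_{p,n}=L_{p,n}(1)-\beta\,Q_p(1)/Q_p'(\alpha_{p,n})$.

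The first family of conditions holds for every $\beta$. Lagrange interpolation reproduces $z^m$ exactly for $m\le j$. Moreover, $\sum_n z_n^m/Q_p'(z_n)$ is the leading coefficient of the interpolant of $z^m$, so it vanishes for $m\le j-1$ and equals $1$ for $m=j$.

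For the last condition, $\mathfrak b_0(zk)$ is a polynomial in $z$ of degree $2j$. We reduce it modulo $Q_p$ using the explicit roots: the $\alpha_{p,n}$ are $j+1$ consecutive powers of $\omega$. The key computation is the residue sum $\sum_n \alpha_{p,n}^{N}/Q_p'(\alpha_{p,n})$ for $j\le N\le 2j$. These are complete homogeneous symmetric polynomials $h_{N-j}$ of the nodes. Along the Robin geometric sum $\sum_q (ik)^{2j-q}(-\gamma)^q z^{2j-q}$ they should telescope into a geometric series in $\gamma/(ik)$.

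Concretely, $\sum_n \mathcal P_{p,n}\mathfrak b_0(\alpha_{p,n}k)=\mathfrak b_0(k)$ is a single scalar equation linear in $\beta$. I expect that solving it gives $\beta=k/(k-i\gamma)$. The expected mechanism is that $\mathfrak b_0(k)=\big((ik)^{2j+1}-(-\gamma)^{j+1}(ik)^{j}\big)/(ik+\gamma)$, and $ik+\gamma=i(k-i\gamma)$. This is where $k\ne i\gamma$ enters.

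Finally, the same residue-sum computation applied to $\mathfrak b_\ell$ gives \eqref{eq:intro-boundary-factorization}. Its $z$-degrees run from $2j-\ell$ down to $j$, so the interpolation error $\mathfrak b_\ell(k)-\sum_n L_{p,n}(1)\mathfrak b_\ell(\alpha_{p,n}k)$ is $Q_p(1)$ times explicit $h$-sums. Combining this with the $\beta$ term and using $(\alpha_{p,n}k)^{2j+1}=k^{2j+1}$ should make all $\gamma$-dependence cancel except the factor $1/(k-i\gamma)$. What remains is $c_{p,\ell}k^{2j+1-\ell}/(k-i\gamma)$, with $c_{p,\ell}$ depending only on $j,p,\ell$ through $Q_p(1)$ and symmetric functions of powers of $\omega$.

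The main obstacle is this last telescoping. The symmetric sums $h_r(\alpha_{p,1},\dots,\alpha_{p,j+1})$ of consecutive roots of unity must combine with the powers of $\gamma$ into a single closed form. My first attempt will be to use the generating function $\sum_r h_r w^r=\prod_n(1-\alpha_{p,n}w)^{-1}$ with $w=\gamma/(ik)$. I will also use that the nodes together with the remaining $j$ roots exhaust the $(2j+1)$th roots of unity. This gives $\prod_{\text{all}}(1-\omega^s w)=1-w^{2j+1}$, which produces the cancellation.
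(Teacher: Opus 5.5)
Your plan is correct and would prove the proposition. However, the decisive identities ($\beta=k/(k-i\gamma)$ and the $\gamma$-free $c_{p,\ell}$) are only announced, and your route differs from the paper's.

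\emph{Comparison with the paper.} You work with the transposed system. You posit the weights and verify the $j+1$ conditions $\sum_n\mathcal P_{p,n}\alpha_{p,n}^m=1$ for $0\le m\le j-1$ and $\sum_n\mathcal P_{p,n}\mathfrak b_0(\alpha_{p,n}k)=\mathfrak b_0(k)$ directly. This cleanly shows that the first $j$ conditions are $\gamma$-free and confines the Robin content to one scalar identity. The paper instead interpolates the right-hand sides, $\mathcal J_p(z)=\sum_{m<j}X_{m+1}z^m+X_{j+1}R_p(z)$. It reads off $\lambda_p(k)X_{j+1}$ from the $z^j$-coefficient and evaluates at $z=1$ (Lemma \ref{lem:sector-interpolation}).

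For the step you call the main obstacle, the paper computes no symmetric sums. Since $z^{2j+1}=1$ at the nodes, the telescoping identity yields $(\zeta z+\gamma)R_p(z,k)-\zeta^{j+1}+(-\gamma)^{j+1}z^j=\zeta\lambda_p(k)Q_p(z)$. Subtracting the exact identity $(\zeta z+\gamma)\mathfrak a_0(z,k)=\zeta^{j+1}z^{2j+1}-(-\gamma)^{j+1}z^j$ at $z=1$ produces $k/(k-i\gamma)$ in one line. Lemma \ref{lem:boundary-coefficients} repeats this for $\mathfrak b_\ell$ by Euclidean division of $(ikz+\gamma)\mathfrak b_\ell(zk)$ by $Q_p$.

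\emph{How your $h$-sum route closes.} It does close, and it gives explicit constants. Put $H_m:=\sum_{r=0}^m h_r(\alpha_{p,1},\dots,\alpha_{p,j+1})$ and $H_{-1}:=0$. For $j\le N\le 2j$, the interpolation error of $z^N$ at $z=1$ is $Q_p(1)H_{N-j-1}$. Summation by parts then gives, for $0\le\ell\le j$ and $\beta=k/(k-i\gamma)$,
\[(ik+\gamma)\Bigl(\mathfrak b_\ell(k)-\sum_{n=1}^{j+1}\mathcal P_{p,n}(k)\mathfrak b_\ell(\alpha_{p,n}k)\Bigr)=Q_p(1)(ik)^{2j+1-\ell}H_{j-\ell}.\]

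For $\ell\ge1$ this is already the factorization, with $c_{p,\ell}=(-1)^{j+1}i^{2j-\ell}Q_p(1)H_{j-\ell}$ (indeed $S_{p,\ell}(1)=H_{j-\ell}$). No property of roots of unity is needed there.

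For $\ell=0$ it is exactly your scalar equation, which holds if and only if $H_j=0$. Your generating function gives $h_r=(-1)^re_r$ of the $j$ non-node $(2j+1)$th roots of unity for $r\le 2j$. Hence $H_j=\prod(1-\omega^s)$ over those roots, which vanishes because $1$ is not a node. This is where $1\le p\le j$ enters, so state it explicitly.

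\emph{Verify rather than solve for $\beta$.} The coefficient of $\beta$ is $-Q_p(1)(ik)^j\lambda_p(k)$, where $\lambda_p(k)=(-1)^j\prod(\gamma+ik\omega^s)$ over the same non-node roots. This can vanish inside $D_{2p}^+$ away from $i\gamma$; for example, $j=3$, $p=1$, $\gamma>0$, $k=i\gamma\omega^{-1}$. At such points the sector system is singular and the equation does not determine $\beta$. The identity with $\beta=k/(k-i\gamma)$ nevertheless still holds there.
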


We first record several identities that will be used in the proof. Since $\alpha_{p,n}^{2j+1}=1$, we have
\[(\alpha_{p,n}k)^{2j+1}=k^{2j+1}.\]
Moreover, if $k\in D_{2p}^+$, then
\[\frac{(2p-1)\pi}{2j+1}<\arg k<\frac{2p\pi}{2j+1}.\]
Since $\alpha_{p,n}=e^{i2\pi(j-p+n)/(2j+1)}$, we have
\[\pi+\frac{2(n-1)\pi}{2j+1}<\arg(\alpha_{p,n}k)<\pi+\frac{(2n-1)\pi}{2j+1},\qquad 1\le n\le j+1.\]
Hence
\begin{equation}\label{eq:imaginary alpha k}
\im (\alpha_{p,n}k)<0,\qquad 1\le n\le j+1.
\end{equation}

Set
\begin{equation}\label{eq:unknown-vector}
X(t,k):=
\begin{bmatrix}
\widetilde g_{2j}(t,k^{2j+1})\\
ik\widetilde g_{2j-1}(t,k^{2j+1})\\
\vdots\\
(ik)^{j-1}\widetilde g_{j+1}(t,k^{2j+1})\\
(ik)^j\widetilde g_0(t,k^{2j+1})
\end{bmatrix}
=:
\begin{bmatrix}
X_1(t,k)\\
X_2(t,k)\\
\vdots\\
X_j(t,k)\\
X_{j+1}(t,k)
\end{bmatrix}.
\end{equation}
When \eqref{e3} is evaluated at $\alpha_{p,n}k$, the unknown boundary terms satisfy
\[(i\alpha_{p,n}k)^m\widetilde g_{2j-m}(t,k^{2j+1})=\alpha_{p,n}^mX_{m+1}(t,k),\qquad 0\le m\le j-1,\]
while
\[\mathfrak b_0(\alpha_{p,n}k)\widetilde g_0(t,k^{2j+1})=\frac{\mathfrak b_0(\alpha_{p,n}k)}{(ik)^j}X_{j+1}(t,k).\]
For $z\in\C$ and $k\neq0$, define
\begin{equation}\label{eq:a0}
\mathfrak a_0(z,k):=\frac{\mathfrak b_0(zk)}{(ik)^j}=\sum_{q=0}^ji^{j-q}k^{j-q}(-\gamma)^qz^{2j-q}.
\end{equation}
Next, for $1\le n\le j+1$, set
\begin{equation}\label{eq:I-pn}
\begin{aligned}
\mathcal I_{p,n}(t,k):=&~{}(-1)^j\left(-e^{-ik^{2j+1}t}\widehat u(t,\alpha_{p,n}k)+\widehat u_0(\alpha_{p,n}k)+F(t,\alpha_{p,n}k)\right)\\
&~{}-\sum_{\ell=1}^j\mathfrak b_\ell(\alpha_{p,n}k)\widetilde\varphi_\ell(t,k^{2j+1}).
\end{aligned}
\end{equation}
Evaluating \eqref{e3} at $\alpha_{p,n}k$ and using $\alpha_{p,n}^{2j+1}=1$, we obtain
\begin{equation}\label{eq:sector-system}
A_p(k)X(t,k)=
\begin{bmatrix}
\mathcal I_{p,1}(t,k)\\
\vdots\\
\mathcal I_{p,j+1}(t,k)
\end{bmatrix},
\end{equation}
where
\begin{equation}\label{eq:A-p}
A_p(k):=
\begin{bmatrix}
1&\alpha_{p,1}&\cdots&\alpha_{p,1}^{j-1}&\mathfrak a_0(\alpha_{p,1},k)\\
1&\alpha_{p,2}&\cdots&\alpha_{p,2}^{j-1}&\mathfrak a_0(\alpha_{p,2},k)\\
\vdots&\vdots&&\vdots&\vdots\\
1&\alpha_{p,j+1}&\cdots&\alpha_{p,j+1}^{j-1}&\mathfrak a_0(\alpha_{p,j+1},k)
\end{bmatrix}.
\end{equation}

The system \eqref{eq:sector-system} determines the unknown boundary vector $X(t,k)$. For the representation of $\widetilde g(t,k)$, however, it suffices to determine a particular linear combination of its components. Indeed, by \eqref{e4} and the definition of $X(t,k)$,
\[\widetilde g(t,k)=(-1)^{j+1}\left(\sum_{m=0}^{j-1}X_{m+1}(t,k)+\mathfrak a_0(1,k)X_{j+1}(t,k)+\sum_{\ell=1}^j\mathfrak b_\ell(k)\widetilde\varphi_\ell(t,k^{2j+1})\right).\]
We therefore first express
\[\sum_{m=0}^{j-1}X_{m+1}(t,k)+\mathfrak a_0(1,k)X_{j+1}(t,k)\]
in terms of the quantities $\mathcal I_{p,n}$ \eqref{eq:I-pn} by Lagrange interpolation. The resulting coefficients of the given boundary data will then be computed separately.

To this end, we apply Lagrange interpolation in the complex variable $z$ at the points $\alpha_{p,1},\ldots,\alpha_{p,j+1}$. Since
\[j-p+n\in\{1,\ldots,2j\},\qquad 1\le n\le j+1,\]
these points are pairwise distinct and none of them equals $1$. For $1\le n\le j+1$, define the Lagrange basis polynomial
\begin{equation}\label{eq:L-pn}
L_{p,n}(z):=\prod_{\substack{1\le m\le j+1\\m\neq n}}\frac{z-\alpha_{p,m}}{\alpha_{p,n}-\alpha_{p,m}}.
\end{equation}
In particular,
\begin{equation}\label{eq:lagrange delta}
L_{p,n}(\alpha_{p,m})=\delta_{nm},\qquad 1\le n,m\le j+1.
\end{equation}
For convenience, set
\begin{equation}\label{eq:Q-p}
Q_p(z):=\prod_{m=1}^{j+1}(z-\alpha_{p,m}).
\end{equation}
Since $\alpha_{p,m}\neq1$ for every $1\le m\le j+1$, we have
\[Q_p(1)\neq0.\]
Then
\begin{equation}\label{eq:L-pn-Q}
L_{p,n}(z)=\frac{Q_p(z)}{(z-\alpha_{p,n})Q_p'(\alpha_{p,n})}.
\end{equation}

\begin{lemma}\label{lem:sector-interpolation}
Let $1\le p\le j$ and $k\in D_{2p}^+$ with $k\neq i\gamma$. Then the vector $X(t,k)$ defined in \eqref{eq:unknown-vector} satisfies
\begin{equation}\label{eq:sector-interpolation}
\sum_{m=0}^{j-1}X_{m+1}(t,k)+\mathfrak a_0(1,k)X_{j+1}(t,k)
=\sum_{n=1}^{j+1}\left(L_{p,n}(1)-\frac{Q_p(1)}{Q_p'(\alpha_{p,n})}\frac{k}{k-i\gamma}\right)\mathcal I_{p,n}(t,k).
\end{equation}
\end{lemma}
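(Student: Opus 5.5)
Starting from the system \eqref{eq:sector-system}, we look for weights $\mathcal P_{p,n}(k)$, $1\le n\le j+1$, such that
\[\sum_{n=1}^{j+1}\mathcal P_{p,n}(k)\,(\text{row $n$ of }A_p(k))=\bigl(1,1,\ldots,1,\mathfrak a_0(1,k)\bigr).\]
Multiplying \eqref{eq:sector-system} by these weights and summing over $n$ then gives the left side of \eqref{eq:sector-interpolation} as $\sum_n\mathcal P_{p,n}\mathcal I_{p,n}$. It therefore suffices to verify the two column identities
\[\sum_n\mathcal P_{p,n}\alpha_{p,n}^m=1\quad (0\le m\le j-1),\qquad \sum_n\mathcal P_{p,n}\mathfrak a_0(\alpha_{p,n},k)=\mathfrak a_0(1,k),\]
with $\mathcal P_{p,n}=L_{p,n}(1)-\frac{Q_p(1)}{Q_p'(\alpha_{p,n})}\frac{k}{k-i\gamma}$.

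\textbf{First $j$ columns.} Let $P$ be a polynomial of degree $\le j$. Lagrange interpolation at the $j+1$ nodes is exact, so $\sum_nL_{p,n}(1)P(\alpha_{p,n})=P(1)$. The residue identity
\[\sum_n\frac{P(\alpha_{p,n})}{Q_p'(\alpha_{p,n})}=[z^j]P,\]
which is the leading-coefficient form of the same interpolation, vanishes when $\deg P\le j-1$. Taking $P(z)=z^m$ with $m\le j-1$ gives $\sum_n\mathcal P_{p,n}\alpha_{p,n}^m=1$, as required.

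\textbf{Last column.} Here $\mathfrak a_0(z,k)$ has degree $2j$ in $z$. Using $z^{2j+1}=1$ at every node, replace $\mathfrak a_0$ by a degree-$\le 2j$ representative and reduce it modulo $Q_p$. Write
\[\mathfrak a_0(z,k)=Q_p(z)S(z)+R(z),\qquad \deg R\le j,\ \deg S\le j-1.\]
Then $\mathfrak a_0(\alpha_{p,n},k)=R(\alpha_{p,n})$, and the two identities above give
\[\sum_n\mathcal P_{p,n}\mathfrak a_0(\alpha_{p,n},k)=R(1)-\frac{k}{k-i\gamma}\,Q_p(1)\,[z^j]R.\]
The target is $\mathfrak a_0(1,k)=R(1)+Q_p(1)S(1)$. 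Since $Q_p(1)\neq0$, the claim reduces to the scalar identity
\[(k-i\gamma)\,S(1)=-k\,[z^j]R.\]

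\textbf{Main obstacle.} This scalar identity is where the telescoping structure of the Robin recursion enters. The plan is to exploit the geometric form
\[\mathfrak a_0(z,k)=\sum_{q=0}^j(ik)^{j-q}(-\gamma)^qz^{2j-q}=z^j\,\frac{(ikz)^{j+1}-(-\gamma)^{j+1}}{ikz+\gamma}\,(ik)^{-1}\]
(up to normalization). Combining this with $Q_p(z)=(z^{2j+1}-1)/\prod_{m\notin\{j-p+1,\ldots,p+1\}}(z-\omega^m)$, the quotient $S$ and the top coefficient of $R$ should both be computable. The factor $ik+\gamma=i(k-i\gamma)$ from the denominator should be exactly what produces $\frac{k}{k-i\gamma}$.

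A cleaner route, which I would try first, is to verify the identity separately on each monomial $z^{2j-q}$. Write $z^{2j-q}=Q_pS_q+R_q$ and compute $[z^j]R_q$ and $S_q(1)$ via the residue sum
\[\sum_n\frac{\alpha_{p,n}^{2j-q}}{Q_p'(\alpha_{p,n})},\]
i.e.\ complete homogeneous symmetric polynomials of the nodes. One then checks that after weighting by $(ik)^{j-q}(-\gamma)^q$ the sums telescope to $(k-i\gamma)S(1)+k[z^j]R=0$. I expect this telescoping check, which relies on the nodes being consecutive powers of $\omega$, to be the main computational step.
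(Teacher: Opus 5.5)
You reduce the lemma correctly, but you do not prove the identity the reduction leads to, and that identity is the whole content of the lemma. The reduction itself matches the paper's structure. The left-weight formulation needs no invertibility of $A_p(k)$. The first $j$ column identities follow from exactness of interpolation and $\sum_n P(\alpha_{p,n})/Q_p'(\alpha_{p,n})=[z^j]P$ for $\deg P\le j$. Via $\mathfrak a_0=Q_pS+R$ and $Q_p(1)\neq0$, the last column does reduce to $(k-i\gamma)S(1)=-k\,[z^j]R$. This scalar identity is the only source of the factor $k/(k-i\gamma)$, and both routes you outline end in ``should be computable'' and ``I expect''. So the argument stops at its essential step.

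Three details of your sketch are also off:
\begin{itemize}
\item The geometric form has no extra factor $(ik)^{-1}$. The correct telescoped identity is $(ikz+\gamma)\mathfrak a_0(z,k)=(ik)^{j+1}z^{2j+1}-(-\gamma)^{j+1}z^j$.
\item The index set in your formula for the cofactor of $Q_p$ should be the complement of $\{j-p+1,\ldots,2j-p+1\}$ in $\{0,\ldots,2j\}$.
\item Consecutiveness of the nodes plays no role. What matters is that they are distinct $(2j+1)$th roots of unity different from $1$.
\end{itemize}

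The missing step closes in a few lines. Put $\zeta=ik$ and $\mu=[z^j]R$, and write $(\zeta z+\gamma)R=\zeta\mu Q_p+\widetilde R$ with $\deg\widetilde R\le j$. Then $(\zeta z+\gamma)\mathfrak a_0=Q_p\bigl((\zeta z+\gamma)S+\zeta\mu\bigr)+\widetilde R$. On the other hand, $z^{2j+1}-1=Q_p(z)\Pi(z)$ with $\Pi(1)=0$, since $1$ is a $(2j+1)$th root of unity that is not a node. So the telescoped identity reads $(\zeta z+\gamma)\mathfrak a_0=\zeta^{j+1}Q_p\Pi+\zeta^{j+1}-(-\gamma)^{j+1}z^j$. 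Uniqueness of Euclidean division gives $(\zeta z+\gamma)S(z)+\zeta\mu=\zeta^{j+1}\Pi(z)$. Setting $z=1$ yields $(\zeta+\gamma)S(1)=-\zeta\mu$, which is your identity because $\zeta+\gamma=i(k-i\gamma)$.

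The paper does the equivalent computation without $\Pi$. It evaluates the telescoped identity at the nodes, where $\alpha_{p,n}^{2j+1}=1$, to get $(\zeta z+\gamma)R-\zeta^{j+1}+(-\gamma)^{j+1}z^j=\zeta\mu Q_p$. It then subtracts this from the telescoped identity at $z=1$.

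Your monomial-by-monomial route would also work, but not through symmetric-function evaluations. Let $S_q$ be the quotient of $z^{2j-q}$ by $Q_p$, so that $S_j=0$ and $S_{-1}=\Pi$. From $z\cdot z^{2j-q}=z^{2j-q+1}$ you get the shift recursion $S_{q-1}=zS_q+[z^j]R_q$. This collapses the weighted sum to $\zeta^{j+1}\Pi(1)-(-\gamma)^{j+1}S_j(1)=0$.
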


\begin{proof}
For $z \in \C$, let
\[R_p(z,k):=\sum_{m=1}^{j+1}\mathfrak a_0(\alpha_{p,m},k)L_{p,m}(z)\]
and
\begin{equation}\label{eq:J_p}
\mathcal J_p(z,t,k):=\sum_{m=1}^{j+1}\mathcal I_{p,m}(t,k)L_{p,m}(z).
\end{equation}
Since each $L_{p,m}$ is a polynomial of degree $j$, both $R_p(\cdot,k)$ and $\mathcal J_p(\cdot,t,k)$ are polynomials of degree at most $j$.
Note by \eqref{eq:lagrange delta} that
\begin{equation}\label{eq:at alpha_pn}
R_p(\alpha_{p,n},k)=\mathfrak a_0(\alpha_{p,n},k),\qquad \mathcal J_p(\alpha_{p,n},t,k)=\mathcal I_{p,n}(t,k),\qquad 1\le n\le j+1.
\end{equation}
By \eqref{eq:sector-system}, for $1\le n\le j+1$,
\[\mathcal I_{p,n}(t,k)=\sum_{m=0}^{j-1}\alpha_{p,n}^mX_{m+1}(t,k)+\mathfrak a_0(\alpha_{p,n},k)X_{j+1}(t,k).\]
Together with \eqref{eq:at alpha_pn}, we obtain
\[\mathcal J_p(\alpha_{p,n},t,k)=\sum_{m=0}^{j-1}X_{m+1}(t,k)\alpha_{p,n}^m+X_{j+1}(t,k)R_p(\alpha_{p,n},k),\qquad 1\le n\le j+1.\]
Therefore, the two polynomials
\[\mathcal J_p(z,t,k)\]
and
\[\sum_{m=0}^{j-1}X_{m+1}(t,k)z^m+X_{j+1}(t,k)R_p(z,k)\]
agree at the $j+1$ distinct points $\alpha_{p,1},\ldots,\alpha_{p,j+1}$. Since both polynomials have degree at most $j$, their difference also has degree at most $j$. On the other hand, the difference vanishes at the $j+1$ distinct points $\alpha_{p,1},\ldots,\alpha_{p,j+1}$. Since a nonzero polynomial of degree at most $j$ has at most $j$ distinct zeros, the difference must vanish identically. Hence
\begin{equation}\label{eq:JXR-local}
\mathcal J_p(z,t,k)=\sum_{m=0}^{j-1}X_{m+1}(t,k)z^m+X_{j+1}(t,k)R_p(z,k).
\end{equation}

We next compare the coefficients of $z^j$ in \eqref{eq:JXR-local}. Denote the coefficient of $z^j$ in $R_p(z,k)$ by $\lambda_p(k)$. Since the first sum on the right-hand side of \eqref{eq:JXR-local} has degree at most $j-1$, the coefficient of $z^j$ on the right-hand side is
\[\lambda_p(k)X_{j+1}(t,k).\]
On the other hand, since $Q_p$ is monic of degree $j+1$, \eqref{eq:L-pn-Q} shows that the coefficient of $z^j$ in $L_{p,n}(z)$ is
\[\frac{1}{Q_p'(\alpha_{p,n})}.\]
Thus, by \eqref{eq:J_p}, the coefficient of $z^j$ in $\mathcal J_p(z,t,k)$ is
\[\sum_{n=1}^{j+1}\frac{\mathcal I_{p,n}(t,k)}{Q_p'(\alpha_{p,n})}.\]
Comparing the coefficients of $z^j$ on both sides of \eqref{eq:JXR-local}, we obtain
\begin{equation}\label{eq:lambda-X}
\lambda_p(k)X_{j+1}(t,k)=\sum_{n=1}^{j+1}\frac{\mathcal I_{p,n}(t,k)}{Q_p'(\alpha_{p,n})}.
\end{equation}

Evaluating \eqref{eq:JXR-local} at $z=1$, we obtain
\[\mathcal J_p(1,t,k)=\sum_{m=0}^{j-1}X_{m+1}(t,k)+X_{j+1}(t,k)R_p(1,k).\]
Therefore,
\begin{equation}\label{eq:boundary-combination}
\sum_{m=0}^{j-1}X_{m+1}(t,k)+\mathfrak a_0(1,k)X_{j+1}(t,k)=\mathcal J_p(1,t,k)+\left(\mathfrak a_0(1,k)-R_p(1,k)\right)X_{j+1}(t,k).
\end{equation}
It remains to determine the second term on the right-hand side of \eqref{eq:boundary-combination}. Set $\zeta:=ik$. By \eqref{eq:a0},
\[\mathfrak a_0(z,k)=\sum_{q=0}^j\zeta^{j-q}(-\gamma)^qz^{2j-q}.\]
Multiplying by $\zeta z+\gamma$, we obtain
\[(\zeta z+\gamma)\mathfrak a_0(z,k)=\sum_{q=0}^j\left(\zeta^{j-q+1}(-\gamma)^qz^{2j-q+1}-\zeta^{j-q}(-\gamma)^{q+1}z^{2j-q}\right).\]
Since the sum on the right-hand side telescopes, we obtain
\begin{equation}\label{eq:a0-telescoping}
(\zeta z+\gamma)\mathfrak a_0(z,k)=\zeta^{j+1}z^{2j+1}-(-\gamma)^{j+1}z^j.
\end{equation}

By $\alpha_{p,n}^{2j+1}=1$ and \eqref{eq:at alpha_pn}, evaluating \eqref{eq:a0-telescoping} at $z=\alpha_{p,n}$ gives
\[(\zeta\alpha_{p,n}+\gamma)R_p(\alpha_{p,n},k)=\zeta^{j+1}-(-\gamma)^{j+1}\alpha_{p,n}^j,\qquad 1\le n\le j+1.\]
It follows that the polynomial
\begin{equation}\label{eq:j+1 polynomial}
(\zeta z+\gamma)R_p(z,k)-\zeta^{j+1}+(-\gamma)^{j+1}z^j
\end{equation}
vanishes at the $j+1$ distinct roots $\alpha_{p,1},\ldots,\alpha_{p,j+1}$. Note that \eqref{eq:j+1 polynomial} has degree at most $j+1$ as a polynomial in $z$. Moreover, since $\lambda_p(k)$ is the coefficient of $z^j$ in $R_p(z,k)$, the coefficient of $z^{j+1}$ in \eqref{eq:j+1 polynomial} is $\zeta\lambda_p(k)$. Thus we obtain
\begin{equation}\label{eq:R-identity}
(\zeta z+\gamma)R_p(z,k)-\zeta^{j+1}+(-\gamma)^{j+1}z^j=\zeta\lambda_p(k)\prod_{m=1}^{j+1}(z-\alpha_{p,m})=\zeta\lambda_p(k)Q_p(z).
\end{equation}
Taking the difference of \eqref{eq:a0-telescoping} and \eqref{eq:R-identity} at $z=1$, we obtain
\[(\zeta+\gamma)\left(\mathfrak a_0(1,k)-R_p(1,k)\right)=-\zeta\lambda_p(k)Q_p(1).\]
Using $\zeta=ik$ and \eqref{eq:lambda-X}, it follows that
\begin{equation}\label{eq:a0-R-X}
\left(\mathfrak a_0(1,k)-R_p(1,k)\right)X_{j+1}(t,k)=-Q_p(1)\frac{k}{k-i\gamma}\sum_{n=1}^{j+1}\frac{\mathcal I_{p,n}(t,k)}{Q_p'(\alpha_{p,n})}.
\end{equation}
By \eqref{eq:J_p},
\[\mathcal J_p(1,t,k)=\sum_{n=1}^{j+1}\mathcal I_{p,n}(t,k)L_{p,n}(1).\]
Substituting this identity and \eqref{eq:a0-R-X} into \eqref{eq:boundary-combination}, we obtain \eqref{eq:sector-interpolation}.
\end{proof}

The coefficients appearing on the right hand side of \eqref{eq:sector-interpolation} will be used repeatedly. For convenience, define
\begin{equation}\label{eq:P-pn}
\mathcal P_{p,n}(k):=L_{p,n}(1)-\frac{Q_p(1)}{Q_p'(\alpha_{p,n})}\frac{k}{k-i\gamma},\qquad 1\le n\le j+1.
\end{equation}
Then \eqref{eq:sector-interpolation} becomes
\begin{equation}\label{eq:sector-interpolation-P}
\sum_{m=0}^{j-1}X_{m+1}(t,k)+\mathfrak a_0(1,k)X_{j+1}(t,k)=\sum_{n=1}^{j+1}\mathcal P_{p,n}(k)\mathcal I_{p,n}(t,k).
\end{equation}

We next determine the coefficients of the given boundary data.

\begin{lemma}\label{lem:boundary-coefficients}
For $1\le p,\ell\le j$ and $k\neq i\gamma$,
\begin{equation}\label{eq:boundary-coefficients}
(-1)^{j+1}\left(\mathfrak b_\ell(k)-\sum_{n=1}^{j+1}\mathcal P_{p,n}(k)\mathfrak b_\ell(\alpha_{p,n}k)\right)
=c_{p,\ell}\frac{k^{2j+1-\ell}}{k-i\gamma},
\end{equation}
where $\mathfrak b_\ell$ and $\mathcal P_{p,n}$ are defined as in \eqref{eq:b-ell} and \eqref{eq:P-pn}, respectively, and $c_{p,\ell}$ depends only on $j$, $p$, and $\ell$.
\end{lemma}

\begin{proof}
Fix $1\le\ell\le j$. For $z \in \C$, set
\[B(z):=\mathfrak b_\ell(zk).\]
By the definition of $\mathfrak b_\ell$,
\[B(z)=\sum_{q=\ell}^j(ikz)^{2j-q}(-\gamma)^{q-\ell}.\]
Multiplying by $ikz+\gamma$, we obtain
\[(ikz+\gamma)B(z)=\sum_{q=\ell}^j\left((ikz)^{2j-q+1}(-\gamma)^{q-\ell}-(ikz)^{2j-q}(-\gamma)^{q-\ell+1}\right).\]
Since the sum on the right hand side telescopes, we have
\begin{equation}\label{eq:b-telescoping}
(ikz+\gamma)B(z)=(ik)^{2j+1-\ell}z^{2j+1-\ell}-(-\gamma)^{j-\ell+1}(ik)^jz^j.
\end{equation}

We divide $B(z)$ by $Q_p(z)$ and write
\begin{equation}\label{eq:b-division}
B(z)=Q_p(z)d(z)+r(z),\qquad \deg r\le j.
\end{equation}
Since $Q_p(\alpha_{p,n})=0$, \eqref{eq:b-division} gives
\[r(\alpha_{p,n})=B(\alpha_{p,n}),\qquad 1\le n\le j+1.\]
Consider the polynomial
\[\sum_{n=1}^{j+1}B(\alpha_{p,n})L_{p,n}(z).\]
By \eqref{eq:lagrange delta}, a direct computation shows that this polynomial has the same values as $r(z)$ at the $j+1$ distinct points $\alpha_{p,1},\ldots,\alpha_{p,j+1}$. Since both polynomials have degree at most $j$, they coincide. Hence
\begin{equation}\label{eq:r-interpolation}
r(z)=\sum_{n=1}^{j+1}B(\alpha_{p,n})L_{p,n}(z).
\end{equation}
Denote the coefficient of $z^j$ in $r(z)$ by $\mu$. As observed in the proof of Lemma \ref{lem:sector-interpolation}, the coefficient of $z^j$ in $L_{p,n}(z)$ is
\[\frac{1}{Q_p'(\alpha_{p,n})}.\]
Hence \eqref{eq:r-interpolation} gives
\begin{equation}\label{eq:mu}
\mu=\sum_{n=1}^{j+1}\frac{B(\alpha_{p,n})}{Q_p'(\alpha_{p,n})}.
\end{equation}
By \eqref{eq:P-pn},
\[\sum_{n=1}^{j+1}\mathcal P_{p,n}(k)B(\alpha_{p,n})=\sum_{n=1}^{j+1}B(\alpha_{p,n})L_{p,n}(1)-Q_p(1)\frac{k}{k-i\gamma}\sum_{n=1}^{j+1}\frac{B(\alpha_{p,n})}{Q_p'(\alpha_{p,n})}.\]
Using \eqref{eq:r-interpolation} and \eqref{eq:mu}, we obtain
\[\sum_{n=1}^{j+1}\mathcal P_{p,n}(k)B(\alpha_{p,n})=r(1)-Q_p(1)\frac{k}{k-i\gamma}\mu.\]
On the other hand, setting $z=1$ in \eqref{eq:b-division} gives
\[B(1)=Q_p(1)d(1)+r(1).\]
Therefore,
\begin{equation}\label{eq:boundary-difference}
B(1)-\sum_{n=1}^{j+1}\mathcal P_{p,n}(k)B(\alpha_{p,n})=Q_p(1)\left(d(1)+\frac{k}{k-i\gamma}\mu\right).
\end{equation}
We next compute
\[d(1)+\frac{k}{k-i\gamma}\mu.\] 
The term $z^{2j+1-\ell}$ appearing on the right hand side of \eqref{eq:b-telescoping} has degree at least $j+1$. We write its Euclidean division by $Q_p(z)$ as
\begin{equation}\label{eq:S-p-ell}
z^{2j+1-\ell}=Q_p(z)S_{p,\ell}(z)+E_{p,\ell}(z),\qquad \deg E_{p,\ell}\le j.
\end{equation}
Since $Q_p$ is monic of degree $j+1$, we have
\[\deg S_{p,\ell}=j-\ell.\]
Recall that $\mu$ denotes the coefficient of $z^j$ in $r(z)$. If $\deg r=j$, then the coefficient of $z^{j+1}$ in $(ikz+\gamma)r(z)$ is $ik\mu$. If $\deg r<j$, then $\mu=0$ and $(ikz+\gamma)r(z)$ has degree at most $j$. Hence, in either case, we can write
\[(ikz+\gamma)r(z)=ik\mu Q_p(z)+\widetilde r(z),\qquad \deg\widetilde r\le j.\]
Multiplying \eqref{eq:b-division} by $ikz+\gamma$, we obtain
\[(ikz+\gamma)B(z)=Q_p(z)\left((ikz+\gamma)d(z)+ik\mu\right)+\widetilde r(z).\]
On the other hand, using \eqref{eq:b-telescoping} and \eqref{eq:S-p-ell}, we have
\[(ikz+\gamma)B(z)=Q_p(z)(ik)^{2j+1-\ell}S_{p,\ell}(z)+(ik)^{2j+1-\ell}E_{p,\ell}(z)-(-\gamma)^{j-\ell+1}(ik)^jz^j.\]
Since
\[\deg\widetilde r\le j\]
and
\[\deg\left((ik)^{2j+1-\ell}E_{p,\ell}(z)-(-\gamma)^{j-\ell+1}(ik)^jz^j\right)\le j,\]
the uniqueness of the Euclidean division by $Q_p(z)$ gives
\[(ikz+\gamma)d(z)+ik\mu=(ik)^{2j+1-\ell}S_{p,\ell}(z).\]
Setting $z=1$, we get
\[(ik+\gamma)d(1)+ik\mu=(ik)^{2j+1-\ell}S_{p,\ell}(1).\]
Since $ik+\gamma=i(k-i\gamma)$, it follows that
\begin{equation}\label{eq:d-mu}
d(1)+\frac{k}{k-i\gamma}\mu=i^{2j-\ell}\frac{k^{2j+1-\ell}}{k-i\gamma}S_{p,\ell}(1).
\end{equation}
Substituting \eqref{eq:d-mu} into \eqref{eq:boundary-difference}, we obtain
\[\mathfrak b_\ell(k)-\sum_{n=1}^{j+1}\mathcal P_{p,n}(k)\mathfrak b_\ell(\alpha_{p,n}k)=i^{2j-\ell}Q_p(1)S_{p,\ell}(1)\frac{k^{2j+1-\ell}}{k-i\gamma}.\]
Therefore, setting
\begin{equation}\label{eq:c-p-ell}
c_{p,\ell}:=(-1)^{j+1}i^{2j-\ell}Q_p(1)S_{p,\ell}(1),
\end{equation}
we obtain \eqref{eq:boundary-coefficients}.
\end{proof}

We now combine \eqref{eq:sector-interpolation-P} and \eqref{eq:boundary-coefficients} to prove Proposition \ref{prop:boundary-elimination}.

\begin{proof}[Proof of Proposition \ref{prop:boundary-elimination}]
By \eqref{e4}, the definition of $X(t,k)$, and \eqref{eq:sector-interpolation-P},
\[\widetilde g(t,k)=(-1)^{j+1}\left(\sum_{n=1}^{j+1}\mathcal P_{p,n}(k)\mathcal I_{p,n}(t,k)+\sum_{\ell=1}^j\mathfrak b_\ell(k)\widetilde\varphi_\ell(t,k^{2j+1})\right).\]
Substituting \eqref{eq:I-pn}, we obtain
\[\begin{aligned}\widetilde g(t,k)=&~{}\sum_{n=1}^{j+1}e^{-ik^{2j+1}t}\widehat u(t,\alpha_{p,n}k)\mathcal P_{p,n}(k)\\
&~{}-\sum_{n=1}^{j+1}\left(\widehat u_0(\alpha_{p,n}k)+F(t,\alpha_{p,n}k)\right)\mathcal P_{p,n}(k)\\
&~{}+(-1)^{j+1}\sum_{\ell=1}^j\left(\mathfrak b_\ell(k)-\sum_{n=1}^{j+1}\mathcal P_{p,n}(k)\mathfrak b_\ell(\alpha_{p,n}k)\right)\widetilde\varphi_\ell(t,k^{2j+1}).
\end{aligned}\]
Applying \eqref{eq:boundary-coefficients} to the last line gives \eqref{formula3}.
\end{proof}

Thus the remaining unknown boundary terms have been eliminated from $\widetilde g(t,k)$. The coefficients in \eqref{formula3} are analytic away from the possible pole $k=i\gamma$.

\subsection{The UTM representation}
We now set $t=T$ in \eqref{formula3} and substitute the resulting expression for $\widetilde g(T,k)$ into \eqref{formula2}. It remains to eliminate the integrals containing the unknown terms
\[\widehat u(T,\alpha_{p,n}k),\qquad 1\le n\le j+1.\]
We first consider the only possible pole of the coefficients $\mathcal P_{p,n}(k)$. By the sector geometry described in Section \ref{sec:linear-robin}, the pole $k=i\gamma$ belongs to $D^+$ precisely when $j$ is odd and $\gamma>0$. In this case, set
\[p_*:=\frac{j+1}{2},\]
so that
\[i\gamma\in D_{2p_*}^+.\]

Let $1\le p\le j$ and suppose that $i\gamma\notin D_{2p}^+$. For $k\in D_{2p}^+$, it follows from \eqref{eq:imaginary alpha k} that
\[\im(\alpha_{p,n}k)<0,\qquad 1\le n\le j+1,\]
and hence
\[k\longmapsto\widehat u(T,\alpha_{p,n}k)\]
is analytic in $D_{2p}^+$. Moreover, $\mathcal P_{p,n}(k)$ is analytic in $D_{2p}^+$. Since \eqref{eq:negative imaginary k^2j+1} and $t-T\le0$, we have
\[\left|e^{ik^{2j+1}(t-T)}\right|\le1,\qquad k\in D_{2p}^+.\]
Together with the decay of $e^{ikx}$ for $x>0$, this gives the required decay on the large circular arcs. Therefore, Cauchy's theorem gives
\[\sum_{n=1}^{j+1}\int_{\partial D_{2p}^+}e^{ikx+ik^{2j+1}(t-T)}\widehat u(T,\alpha_{p,n}k)\mathcal P_{p,n}(k)\,dk=0.\]
Thus these integrals vanish for every $1\le p\le j$ if $\gamma<0$, or if $\gamma>0$ and $j$ is even. If $\gamma>0$ and $j$ is odd, they vanish for every $p\neq p_*$.

We next consider the exceptional case where $j$ is odd, $\gamma>0$, and $p=p_*$. By \eqref{eq:P-pn},
\[\operatorname{Res}_{k=i\gamma}\mathcal P_{p_*,n}(k)=-\frac{i\gamma Q_{p_*}(1)}{Q_{p_*}'(\alpha_{p_*,n})}.\]
Since $\widetilde g(T,k)$ is entire, the residue of the right hand side of \eqref{formula3} at $k=i\gamma$ must vanish. Therefore,
\[\begin{aligned}
\operatorname{Res}_{k=i\gamma}\left(\sum_{n=1}^{j+1}e^{-ik^{2j+1}T}\widehat u(T,\alpha_{p_*,n}k)\mathcal P_{p_*,n}(k)\right)=&~{}-i\gamma Q_{p_*}(1)\sum_{n=1}^{j+1}\frac{\widehat u_0(i\gamma\alpha_{p_*,n})+F(T,i\gamma\alpha_{p_*,n})}{Q_{p_*}'(\alpha_{p_*,n})}\\
&~{}-\sum_{\ell=1}^jc_{p_*,\ell}(i\gamma)^{2j+1-\ell}\widetilde\varphi_\ell(T,-i\gamma^{2j+1}).
\end{aligned}\]
Here we used
\[(i\gamma)^{2j+1}=-i\gamma^{2j+1},\]
which holds because $j$ is odd.

With the orientation of $\partial D_{2p_*}^+$ used in \eqref{formula2}, the residue theorem gives
\begin{equation}\label{eq:unknown-residue-contribution}
\begin{aligned}
&~{}\frac{1}{2\pi}\sum_{n=1}^{j+1}\int_{\partial D_{2p_*}^+}e^{ikx+ik^{2j+1}(t-T)} \widehat u(T,\alpha_{p_*,n}k)\mathcal P_{p_*,n}(k)\,dk \\
=&~{}\gamma Q_{p_*}(1)e^{-\gamma x+\gamma^{2j+1}t}\sum_{n=1}^{j+1}\frac{\widehat u_0(i\gamma\alpha_{p_*,n})+F(T,i\gamma\alpha_{p_*,n})}{Q_{p_*}'(\alpha_{p_*,n})}\\
&~{}-ie^{-\gamma x+\gamma^{2j+1}t}\sum_{\ell=1}^jc_{p_*,\ell}(i\gamma)^{2j+1-\ell}\widetilde\varphi_\ell(T,-i\gamma^{2j+1}).
\end{aligned}
\end{equation}
Accordingly, define
\begin{equation}\label{eq:residue-term}
\mathcal R_{j,\gamma}[u_0,\bphi,f](t,x):=
\begin{cases}
\displaystyle
\gamma Q_{p_*}(1)e^{-\gamma x+\gamma^{2j+1}t}
\sum_{n=1}^{j+1}
\frac{\widehat u_0(i\gamma\alpha_{p_*,n})+F(T,i\gamma\alpha_{p_*,n})}
{Q_{p_*}'(\alpha_{p_*,n})}
\\
\displaystyle\qquad
-ie^{-\gamma x+\gamma^{2j+1}t}
\sum_{\ell=1}^jc_{p_*,\ell}(i\gamma)^{2j+1-\ell}
\widetilde\varphi_\ell(T,-i\gamma^{2j+1}),
& j \text{ odd and } \gamma>0,\\[2mm]
0,
& \text{otherwise}.
\end{cases}
\end{equation}

\begin{proposition}[UTM representation]\label{prop:utm-formula}
Let $u_0$, $\bphi=(\varphi_1,\ldots,\varphi_j)$, and $f$ be sufficiently smooth and decaying, and let $u$ be a sufficiently smooth solution of \eqref{eq:linear-robin} whose spatial derivatives decay at infinity. Then, for $x>0$ and $0\le t\le T$,
\begin{equation}\label{eq:utm-full-formula}
\begin{aligned}
u(t,x)=&~{}\frac{1}{2\pi}\int_{\R}e^{ikx+ik^{2j+1}t}\left(\widehat u_0(k)+F(t,k)\right)\,dk\\
&~{}-\frac{1}{2\pi}\sum_{p=1}^j\sum_{n=1}^{j+1}\int_{\partial D_{2p}^+}e^{ikx+ik^{2j+1}t} \left(\widehat u_0(\alpha_{p,n}k)+F(T,\alpha_{p,n}k)\right)\mathcal P_{p,n}(k)\,dk\\
&~{}+\frac{1}{2\pi}\sum_{p=1}^j\sum_{\ell=1}^jc_{p,\ell}\int_{\partial D_{2p}^+}e^{ikx+ik^{2j+1}t}\frac{k^{2j+1-\ell}}{k-i\gamma}\widetilde\varphi_\ell(T,k^{2j+1})\,dk\\
&~{}+\mathcal R_{j,\gamma}[u_0,\bphi,f](t,x).
\end{aligned}
\end{equation}
\end{proposition}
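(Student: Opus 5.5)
The plan is to assemble \eqref{eq:utm-full-formula} from \eqref{formula2} by substituting $\widetilde g(T,k)$ from Proposition \ref{prop:boundary-elimination} with $t=T$, sector by sector. On each $\partial D_{2p}^+$, $1\le p\le j$, we use \eqref{formula3} at $t=T$. This holds for $k\in D_{2p}^+$, $k\ne i\gamma$, and extends by continuity to the boundary rays; the coefficients $\mathcal P_{p,n}$ and $c_{p,\ell}k^{2j+1-\ell}/(k-i\gamma)$ are analytic in a neighborhood of $\partial D_{2p}^+$, since $i\gamma$ never lies on a ray $\arg k=m\pi/(2j+1)$ for $\gamma\neq0$, and for $\gamma=0$ the singularity at $0$ is removable. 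Multiplying by $\frac1{2\pi}e^{ikx+ik^{2j+1}t}$ and integrating, we split the contribution into three groups. The first group, the $\widehat u_0$ and $F(T,\cdot)$ terms, gives directly the second line of \eqref{eq:utm-full-formula} with the minus sign. The second group, the $\widetilde\varphi_\ell$ terms, gives the third line. The third group consists of the unknown terms $e^{ik^{2j+1}(t-T)}\widehat u(T,\alpha_{p,n}k)\mathcal P_{p,n}(k)$.

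For the unknown terms we follow the discussion preceding the statement. If $i\gamma\notin D_{2p}^+$, the integrand is analytic in $D_{2p}^+$ by \eqref{eq:imaginary alpha k}. It is also bounded suitably there, using $|e^{ik^{2j+1}(t-T)}|\le1$ and the decay of $e^{ikx}$ for $x>0$. In addition, $\widehat u(T,\alpha_{p,n}k)=O(|k|^{-1})$ by one integration by parts in $x$, using the smoothness and decay of $u$. Jordan's lemma on truncated sectors then gives zero. In the exceptional case ($j$ odd, $\gamma>0$, $p=p_*$), the same arc estimates reduce the contour integral to $2\pi i$ times the residue at $k=i\gamma$, with the sign determined by the positive orientation of $\partial D_{2p_*}^+$.

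The residue itself cannot be computed directly, because it involves the unknown $\widehat u(T,i\gamma\alpha_{p_*,n})$. We trade it using entireness of $\widetilde g(T,\cdot)$: the total residue of the right-hand side of \eqref{formula3} at $i\gamma$ vanishes. So the residue of the unknown part equals minus the residues of the known parts, namely the $\widehat u_0+F(T,\cdot)$ terms with $\operatorname{Res}\mathcal P_{p_*,n}=-i\gamma Q_{p_*}(1)/Q_{p_*}'(\alpha_{p_*,n})$, and the $\widetilde\varphi_\ell$ terms with residue $c_{p_*,\ell}(i\gamma)^{2j+1-\ell}\widetilde\varphi_\ell(T,(i\gamma)^{2j+1})$. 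We then multiply by $\frac{1}{2\pi}\cdot 2\pi i\, e^{ikx+ik^{2j+1}t}|_{k=i\gamma}=i e^{-\gamma x+\gamma^{2j+1}t}$, using $(i\gamma)^{2j+1}=-i\gamma^{2j+1}$ for $j$ odd. This reproduces \eqref{eq:unknown-residue-contribution}, which is precisely $\mathcal R_{j,\gamma}$. Adding the three groups to the first line of \eqref{formula2} yields \eqref{eq:utm-full-formula}.

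The main obstacle is the justification of the contour manipulations, not the algebra. One point is the large-arc decay in the even sectors for the unknown terms, which needs the $O(|k|^{-1})$ bound on $\widehat u(T,\alpha k)$ uniformly in the closed sector. The other is checking that no other singularity lies on or inside $\partial D_{2p}^+$. For the latter, $Q_p'(\alpha_{p,n})\neq0$ and $L_{p,n}(1)$ are constants, so the only singularity is at $i\gamma$. We also need to confirm the orientation sign: that $\partial D_{2p_*}^+$ positively oriented encloses $i\gamma$ counterclockwise, so the residue theorem carries the factor $+2\pi i$.
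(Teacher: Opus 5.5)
Your proposal is correct and follows the paper's proof. You substitute \eqref{formula3} at $t=T$ into \eqref{formula2}, kill the $\widehat u(T,\alpha_{p,n}k)$ integrals by Cauchy's theorem in the even sectors not containing $i\gamma$, and in the exceptional sector $D_{2p_*}^+$ apply the residue theorem, using that $\widetilde g(T,\cdot)$ is entire to trade the unknown residue for the known ones, which yields exactly $\mathcal R_{j,\gamma}$. Your extra justifications are correct refinements of points the paper leaves implicit: validity of \eqref{formula3} up to the boundary rays, the fact that $i\gamma$ never lies on a ray $\arg k=m\pi/(2j+1)$, and the $O(|k|^{-1})$ bound on $\widehat u(T,\alpha_{p,n}k)$.
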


\begin{proof}
Set $t=T$ in \eqref{formula3} and substitute the resulting expression into \eqref{formula2}. The contour integrals containing $\widehat u(T,\alpha_{p,n}k)$ vanish by the argument above, except when $j$ is odd, $\gamma>0$, and $p=p_*$. In the exceptional case, their contribution is given by \eqref{eq:unknown-residue-contribution}, namely $\mathcal R_{j,\gamma}[u_0,\bphi,f]$. The case $\gamma=0$ follows by removing the singularity at $k=0$ as in Remark \ref{rem:gamma-zero}. This yields \eqref{eq:utm-full-formula}.
\end{proof}

\begin{remark}[The case $\gamma=0$]\label{rem:gamma-zero}
When $\gamma=0$, the factor $k/(k-i\gamma)$ in \eqref{eq:P-pn} has a removable singularity at $k=0$. Thus
\[\mathcal P_{p,n}(0):=L_{p,n}(1)-\frac{Q_p(1)}{Q_p'(\alpha_{p,n})}.\]
Moreover,
\[\frac{k^{2j+1-\ell}}{k-i\gamma}=k^{2j-\ell},\qquad 1\le\ell\le j.\]
Hence the boundary integrands in \eqref{eq:utm-full-formula} are regular at the origin, and no residue contribution arises from $k=0$.
\end{remark}

\subsection{Equivalence of the differential and UTM formulations}
For the verification of the boundary conditions, we first record two algebraic identities. For $1\le p\le j$, set
\[a_p:=e^{i(2p-1)\pi/(2j+1)},\qquad a_p':=e^{i2p\pi/(2j+1)}.\]
Then
\[a_p^{2j+1}=-1,\qquad (a_p')^{2j+1}=1,\]
and, by \eqref{eq:alpha-pn},
\[\alpha_{p,n}a_p=-\omega^{n-1},\qquad \alpha_{p,n}a_p'=\omega^{j+n},\qquad 1\le n\le j+1.\]

\begin{lemma}\label{lem:utm-boundary-algebra}
For every $1\le\ell\le j$, $1\le n\le j+1$, and $r>0$,
\begin{equation}\label{eq:P-ray-identity}
\sum_{p=1}^ja_p^\ell(a_pr-i\gamma)\mathcal P_{p,n}(a_pr)=-(-\omega^{n-1})^\ell(-\omega^{n-1}r-i\gamma)
\end{equation}
and
\begin{equation}\label{eq:P-ray-identity-prime}
\sum_{p=1}^j(a_p')^\ell(a_p'r-i\gamma)\mathcal P_{p,n}(a_p'r)=-(\omega^{j+n})^\ell(\omega^{j+n}r-i\gamma).
\end{equation}
Moreover, for every $1\le\ell,q\le j$,
\begin{equation}\label{eq:c-ray-identity}
\sum_{p=1}^jc_{p,q}a_p^{\ell-q}=\sum_{p=1}^jc_{p,q}(a_p')^{\ell-q}=-(2j+1)i^{-\ell}\delta_{\ell q}.
\end{equation}
\end{lemma}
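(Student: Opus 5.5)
Both identities of Lemma \ref{lem:utm-boundary-algebra} reduce to Lagrange interpolation facts on the full set of $(2j+1)$st roots of $\pm1$. For \eqref{eq:P-ray-identity}, the first step is to clear the pole. Setting $k=a_pr$ in \eqref{eq:P-pn} gives
\[(a_pr-i\gamma)\mathcal P_{p,n}(a_pr)=(a_pr-i\gamma)L_{p,n}(1)-\frac{Q_p(1)}{Q_p'(\alpha_{p,n})}a_pr,\]
which is affine in $r$ and $\gamma$. The identity therefore splits into two $\gamma$-independent statements. Comparing the coefficients of $\gamma$ gives
\[\sum_p a_p^\ell L_{p,n}(1)=-(-\omega^{n-1})^\ell .\]
Comparing the coefficients of $r$ gives
\[\sum_p a_p^{\ell+1}\Bigl(L_{p,n}(1)-\frac{Q_p(1)}{Q_p'(\alpha_{p,n})}\Bigr)=-(-\omega^{n-1})^{\ell+1}.\]
These hold for $1\le\ell\le j$.

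To verify them, I rewrite everything in the rotated variable $w=a_pz$. Since $\alpha_{p,m}a_p=-\omega^{m-1}$, the nodes become $\{-\omega^{m-1}\}_{m=1}^{j+1}$, which are independent of $p$. The evaluation point $z=1$ becomes $w=a_p$, and as $p$ runs over $1,\dots,j$ the points $a_p$ are exactly the remaining $j$ roots of $w^{2j+1}=-1$. Put $\widehat Q(w)=\prod_m(w+\omega^{m-1})$ and let $\widehat L_n$ be the Lagrange basis for these nodes. Then $L_{p,n}(1)=\widehat L_n(a_p)$, and by homogeneity $Q_p(1)/Q_p'(\alpha_{p,n})=a_p^{-1}\widehat Q(a_p)/\widehat Q'(-\omega^{n-1})$. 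Both required sums are then of the form $\sum_{w^{2j+1}=-1,\ w\notin\text{nodes}}w^{\ell}\widehat L_n(w)$, possibly with a $\widehat Q(w)$ correction.

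I evaluate these sums by the discrete orthogonality relation
\[\sum_{w^{2j+1}=-1}w^{m}=0\qquad\text{for }0<m<2j+1,\]
applied over all $2j+1$ roots and then subtracting the node contributions. At the nodes, $\widehat L_n$ is a Kronecker delta, which produces the right-hand side $-(-\omega^{n-1})^\ell$. The degree bookkeeping uses $\deg \widehat L_n=j$ and $1\le\ell\le j$, so $\ell+j\le 2j$. The $\widehat Q$ correction is needed in the $r$-coefficient when $\ell+1+j$ reaches $2j+1$. It cancels the top-degree wraparound, and I expect this to follow from the fact that the $z^j$ coefficient of $L_{p,n}$ is $1/Q_p'(\alpha_{p,n})$. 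The identity \eqref{eq:P-ray-identity-prime} is identical, with $a_p'$, the nodes $\omega^{j+n}$, and the roots of $w^{2j+1}=1$.

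For \eqref{eq:c-ray-identity}, I use \eqref{eq:c-p-ell} together with \eqref{eq:S-p-ell}. Here $Q_p(1)S_{p,q}(1)$ is the polynomial-division quotient of $z^{2j+1-q}$ by $Q_p$, evaluated at $1$. Rotating again by $a_p$, this becomes $a_p^{-(2j+1-q)}\widehat Q(a_p)\widehat S_q(a_p)$, where $\widehat S_q$ is the quotient of $w^{2j+1-q}$ by $\widehat Q$. Since $\widehat Q(a_p)\widehat S_q(a_p)=a_p^{2j+1-q}-\widehat E_q(a_p)$ and $a_p^{2j+1}=-1$, the sum over $p$ of $a_p^{\ell-q}\cdot(\cdot)$ becomes a sum over the non-node roots of $w^{\ell}\bigl(1-\dots\bigr)$. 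The orthogonality relation then leaves only a $\delta_{\ell q}$ term with total weight $2j+1$. The main obstacle I expect is tracking the constants $i^{2j-q}$ and $(-1)^{j+1}$ to get exactly $-(2j+1)i^{-\ell}$, along with the contribution of the remainder $\widehat E_q$ at the nodes. I would check that $\widehat E_q$ interpolates $w^{2j+1-q}$ there, so that the node terms combine into a full root sum, and I would confirm the constant by checking the case $j=1$ directly.
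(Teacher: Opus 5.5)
Your proposal is correct, and its core is the paper's. Rotating by $a_p$ makes the nodes $-\omega^{m-1}$ independent of $p$ (your $\widehat L_n$ is the paper's $\Lambda_n$). The points $a_1,\dots,a_j$ together with the nodes are exactly the roots of $z^{2j+1}+1$, and vanishing power sums finish the job.

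The real difference is in \eqref{eq:P-ray-identity}. The paper first reads off from \eqref{eq:L-pn-Q} that $Q_p(1)/Q_p'(\alpha_{p,n})=(1-\alpha_{p,n})L_{p,n}(1)$, hence $(k-i\gamma)\mathcal P_{p,n}(k)=(\alpha_{p,n}k-i\gamma)L_{p,n}(1)$. At $k=a_pr$ the factor $\alpha_{p,n}a_pr-i\gamma=-\omega^{n-1}r-i\gamma$ does not depend on $p$ and comes out of the sum. So only your $\gamma$-coefficient identity $\sum_{p} a_p^\ell L_{p,n}(1)=-(-\omega^{n-1})^\ell$ is needed. In your splitting, the $r$-coefficient identity is just that identity multiplied by $-\omega^{n-1}$, because $L_{p,n}(1)-Q_p(1)/Q_p'(\alpha_{p,n})=\alpha_{p,n}L_{p,n}(1)$. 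Your separate treatment does close: for $\ell=j$, the full root sums of $w^{j+1}\widehat L_n(w)$ and of $w^j\widehat Q(w)/\widehat Q'(-\omega^{n-1})$ both equal $-(2j+1)/\widehat Q'(-\omega^{n-1})$ and cancel. It is, however, an avoidable detour, and you left it as an expectation rather than a computation.

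For \eqref{eq:c-ray-identity}, you work directly from \eqref{eq:S-p-ell} after rotation, using $\widehat E_q(w)=w^{2j+1-q}$ at the nodes and $\sum w^\ell\widehat E_q(w)=0$ over all roots. This is equivalent to the paper's specialization of \eqref{eq:boundary-coefficients} to $\gamma=0$ followed by reuse of the $L$-identity. Both give $\sum_{p} a_p^{\ell-q}Q_p(1)S_{p,q}(1)=(2j+1)\delta_{\ell q}$, and $(-1)^{j+1}i^{2j-\ell}=-i^{-\ell}$ produces the stated constant.

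You did not treat the $a_p'$ half of \eqref{eq:c-ray-identity}. It follows by rerunning the argument on the roots of $w^{2j+1}=1$, where the two sign changes cancel. It follows more quickly, as in the paper, from $a_p'=e^{i\pi/(2j+1)}a_p$ together with the fact that the right side vanishes unless $\ell=q$.
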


\begin{proof}
By \eqref{eq:L-pn-Q},
\[\frac{Q_p(1)}{Q_p'(\alpha_{p,n})}=(1-\alpha_{p,n})L_{p,n}(1).\]
Hence \eqref{eq:P-pn} gives
\begin{equation}\label{eq:P-pole-cancellation}
(k-i\gamma)\mathcal P_{p,n}(k)=(\alpha_{p,n}k-i\gamma)L_{p,n}(1).
\end{equation}
For fixed $1\le n\le j+1$, define
\[\Lambda_n(z):=\prod_{\substack{1\le m\le j+1\\m\neq n}}\frac{z+\omega^{m-1}}{-\omega^{n-1}+\omega^{m-1}}.\]
Since
\[\alpha_{p,m}=\frac{-\omega^{m-1}}{a_p},\]
the definition \eqref{eq:L-pn} gives
\[L_{p,n}(1)=\Lambda_n(a_p).\]
The numbers
\[a_1,\ldots,a_j,-1,-\omega,\ldots,-\omega^j\]
are precisely the $2j+1$ roots of $z^{2j+1}+1$. Since $z^\ell\Lambda_n(z)$ has degree at most $2j$ and has zero constant term,
\[\sum_{p=1}^ja_p^\ell\Lambda_n(a_p)+\sum_{m=1}^{j+1}(-\omega^{m-1})^\ell\Lambda_n(-\omega^{m-1})=0.\]
By the definition of $\Lambda_n$,
\[\Lambda_n(-\omega^{m-1})=\delta_{nm},\]
and therefore
\begin{equation}\label{eq:L-ray-identity}
\sum_{p=1}^ja_p^\ell L_{p,n}(1)=-(-\omega^{n-1})^\ell.
\end{equation}
Since
\[a_p'=e^{i\pi/(2j+1)}a_p,\qquad \omega^{j+n}=e^{i\pi/(2j+1)}(-\omega^{n-1}),\]
we also obtain
\begin{equation}\label{eq:L-ray-identity-prime}
\sum_{p=1}^j(a_p')^\ell L_{p,n}(1)=-(\omega^{j+n})^\ell.
\end{equation}
Using \eqref{eq:P-pole-cancellation}, \eqref{eq:L-ray-identity}, and \eqref{eq:L-ray-identity-prime}, we obtain \eqref{eq:P-ray-identity} and \eqref{eq:P-ray-identity-prime}.

We next prove \eqref{eq:c-ray-identity}. Since the coefficient $c_{p,q}$ in \eqref{eq:c-p-ell} is independent of $\gamma$, we may evaluate \eqref{eq:boundary-coefficients} at $\gamma=0$. In this case,
\[\mathfrak b_q(k)=(ik)^{2j-q},\]
while \eqref{eq:P-pole-cancellation} gives
\[\mathcal P_{p,n}(k)=\alpha_{p,n}L_{p,n}(1).\]
Thus \eqref{eq:boundary-coefficients} gives
\begin{equation}\label{eq:c-interpolation-identity}
c_{p,q}=(-1)^{j+1}i^{2j-q}\left(1-\sum_{n=1}^{j+1}\alpha_{p,n}^{2j+1-q}L_{p,n}(1)\right).
\end{equation}
Multiplying \eqref{eq:c-interpolation-identity} by $a_p^{\ell-q}$ and summing over $p$, we obtain
\[\sum_{p=1}^jc_{p,q}a_p^{\ell-q}=(-1)^{j+1}i^{2j-q}\left(\sum_{p=1}^ja_p^{\ell-q}-\sum_{n=1}^{j+1}(-\omega^{n-1})^{2j+1-q}\sum_{p=1}^ja_p^{\ell-2j-1}L_{p,n}(1)\right).\]
Since $a_p^{2j+1}=-1$, \eqref{eq:L-ray-identity} gives
\[\sum_{p=1}^ja_p^{\ell-2j-1}L_{p,n}(1)=(-\omega^{n-1})^\ell.\]
Hence
\[\sum_{p=1}^jc_{p,q}a_p^{\ell-q}=(-1)^{j+1}i^{2j-q}\left(\sum_{p=1}^ja_p^{\ell-q}+\sum_{n=1}^{j+1}(-\omega^{n-1})^{\ell-q}\right).\]
The two sums on the right give the sum of the $(\ell-q)$th powers of all roots of $z^{2j+1}+1$. Since the roots are nonzero,
\[\sum_{\zeta^{2j+1}=-1}\zeta^m=0\]
for every nonzero integer $m$ with $|m|<2j+1$, while the sum equals $2j+1$ for $m=0$. Since
\[1-j\le\ell-q\le j-1,\]
we obtain
\[\sum_{p=1}^ja_p^{\ell-q}+\sum_{n=1}^{j+1}(-\omega^{n-1})^{\ell-q}=(2j+1)\delta_{\ell q}.\]
Therefore,
\[\sum_{p=1}^jc_{p,q}a_p^{\ell-q}=-(2j+1)i^{-\ell}\delta_{\ell q}.\]
Finally,
\[\sum_{p=1}^jc_{p,q}(a_p')^{\ell-q}=e^{i(\ell-q)\pi/(2j+1)}\sum_{p=1}^jc_{p,q}a_p^{\ell-q}=-(2j+1)i^{-\ell}\delta_{\ell q}.\]
This proves \eqref{eq:c-ray-identity}.
\end{proof}

\begin{proposition}[Verification of the UTM representation]\label{prop:utm-verification}
Let $u_0$, $\bphi=(\varphi_1,\ldots,\varphi_j)$, and $f$ be sufficiently smooth and decaying. Define $u$ by the right hand side of \eqref{eq:utm-full-formula}. Then $u$ satisfies the linear IBVP \eqref{eq:linear-robin}.
\end{proposition}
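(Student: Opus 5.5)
We verify the three requirements (PDE, initial condition, Robin conditions) separately, treating each term of \eqref{eq:utm-full-formula} in turn.

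\textbf{PDE.} Each contour integrand depends on $(t,x)$ only through $e^{ikx+ik^{2j+1}t}$, apart from the whole-line forcing term $F(t,k)$. The exponential is annihilated by $\partial_t+(-1)^{j+1}\partial_x^{2j+1}$, since $(-1)^{j+1}(ik)^{2j+1}=-ik^{2j+1}$. The residue mode $e^{-\gamma x+\gamma^{2j+1}t}$ is also annihilated: $(-1)^{j+1}(-\gamma)^{2j+1}=(-1)^j\gamma^{2j+1}=-\gamma^{2j+1}$ for $j$ odd. Differentiation under the integral on $\partial D^+$ is justified because $e^{ikx}$ decays for $x>0$, and $\widehat u_0(\alpha_{p,n}k)$, $F(T,\alpha_{p,n}k)$ are bounded there by \eqref{eq:imaginary alpha k}. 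The only surviving term is $\partial_tF(t,k)=e^{-ik^{2j+1}t}\widehat f(t,k)$ in the first integral, which reproduces $f(t,x)$ by Fourier inversion.

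\textbf{Initial condition.} At $t=0$ we have $F(0,k)=0$, and the first integral gives $u_0(x)$. For the $\partial D^+$ terms, the plan is to close the contours in each $D_{2p}^+$, where $\im k^{2j+1}<0$ makes $e^{ik^{2j+1}(0-T)}$ bounded. The point is to rewrite $F(T,\alpha k)$ and $\widetilde\varphi_\ell(T,k^{2j+1})$ as time integrals $\int_0^T e^{-ik^{2j+1}\tau}(\cdots)d\tau$. Combined with $e^{ik^{2j+1}t}$, these give $e^{ik^{2j+1}(t-\tau)}$ with $t-\tau\le0$, which is bounded in $D_{2p}^+$. Integrating by parts in $\tau$, as in \eqref{eq:arc-decay}, supplies $O(1/|k|)$ decay for Jordan's lemma. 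The $\widehat u_0(\alpha_{p,n}k)$ terms are handled similarly. The $u_0$ pieces decay like $1/|k|$ after integrating by parts in $x$, and the $\varphi$ pieces are controlled because $k^{2j+1-\ell}/(k-i\gamma)$ times the boundary term is $O(k^{2j-\ell}/k^{2j+1})$.

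The poles at $k=i\gamma$, present when $j$ is odd and $\gamma>0$, contribute exactly minus the residue term at $t=0$. This is guaranteed by the construction \eqref{eq:unknown-residue-contribution}, which was obtained by the same residue computation. Hence all contributions other than $u_0$ cancel at $t=0$.

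\textbf{Robin conditions; expected main obstacle.} We apply $(\partial_x+\gamma)\partial_x^{\ell-1}$ at $x=0$, which multiplies each integrand by $(ik+\gamma)(ik)^{\ell-1}=i^\ell k^{\ell-1}(k-i\gamma)$. This cancels the pole: in the $\mathcal P_{p,n}$ terms we use \eqref{eq:P-pole-cancellation}, and in the $\varphi$ terms the factor becomes $k^{2j+\ell-\ell'}$ with no singularity. The residue mode is killed exactly, since $(\partial_x+\gamma)e^{-\gamma x}=0$.

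Next we parametrize $\partial D_{2p}^+$ by the rays $k=a_pr$ and $k=a_p'r$, $r>0$, with opposite orientations. In the $u_0$ and $F$ terms, the identities \eqref{eq:P-ray-identity} and \eqref{eq:P-ray-identity-prime} convert $\sum_p$ of the ray integrals into integrals over the rays $-\omega^{n-1}r$ and $\omega^{j+n}r$, with weight $(\alpha k)^{\ell-1}(\alpha k-i\gamma)$ evaluated at the rotated point $\alpha=\alpha_{p,n}$. After the change of variables $\lambda=\alpha_{p,n}k$, these combine across $n$ into integrals over the boundaries of the odd sectors $D_{2n-1}^-$ in the lower half plane. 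There $\widehat u_0(\lambda)$ and $F$ are analytic with decay, so by Cauchy's theorem they cancel the matching part of the first, whole-line integral evaluated at $x=0$ and leave no contribution. Doing this bookkeeping precisely, including the orientations and the time-$t$ versus time-$T$ mismatch in $F$, is the step I expect to be delicate. The mismatch will be handled as in \eqref{eq:g-t-T-vanishing}.

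For the boundary data, \eqref{eq:c-ray-identity} collapses $\sum_p c_{p,q}$ over the two rays to $-(2j+1)i^{-\ell}\delta_{\ell q}$ times an integral over $r$ of $e^{ik^{2j+1}t}k^{2j}\widetilde\varphi_\ell(T,k^{2j+1})$. Substituting $\tau'=k^{2j+1}$, which maps the ray pair onto $\R$, this becomes $\frac1{2\pi}\int_\R e^{i\tau' t}\widetilde\varphi_\ell(T,\tau')d\tau'=\varphi_\ell(t)$ for $0<t<T$, by Fourier inversion in time. This yields the $\ell$th Robin condition.
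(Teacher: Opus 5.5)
Your outline follows the paper's proof. The PDE and initial-condition checks are done the same way. Your remark that the $t=0$ residues at $k=i\gamma$ equal $-\mathcal R_{j,\gamma}[u_0,\bphi,f](0,x)$ by the very definition of $\mathcal R_{j,\gamma}$ is a valid shortcut for the paper's explicit recomputation. For the Robin traces you use the same ingredients: \eqref{eq:P-pole-cancellation}, \eqref{eq:P-ray-identity}, \eqref{eq:P-ray-identity-prime}, \eqref{eq:c-ray-identity}, and the $F(T)\to F(t)$ replacement as in \eqref{eq:g-t-T-vanishing}. The boundary-data computation ending in Fourier inversion is correct.

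\textbf{The step you flagged as delicate would fail as you describe it.} After the ray identities and $\lambda=\alpha_{p,n}k$, the $u_0$ and $F$ boundary terms equal $\sum_{n=1}^{j+1}\bigl(\int_{-\omega^{n-1}\R^+}\mathcal K_\ell-\int_{\omega^{j+n}\R^+}\mathcal K_\ell\bigr)$. Here $\mathcal K_\ell(t,\lambda)=e^{i\lambda^{2j+1}t}(\lambda-i\gamma)\lambda^{\ell-1}(\widehat u_0(\lambda)+F(t,\lambda))$.

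For fixed $n$, these two rays bound the sector $\pi+\frac{(2n-2)\pi}{2j+1}<\arg\lambda<\pi+\frac{(2n-1)\pi}{2j+1}$, which is your $D^-_{2n-1}$. In that sector $\im\lambda^{2j+1}<0$. Hence $e^{i\lambda^{2j+1}t}$ grows exponentially for $t>0$. So does $e^{i\lambda^{2j+1}t}F(t,\lambda)=\int_0^te^{i\lambda^{2j+1}(t-\tau)}\widehat f(\tau,\lambda)\,d\tau$. Analyticity of $\widehat u_0$ and $F$ and the $1/\lambda$ decay of $\widehat u_0$ hold in all of the lower half plane, so they cannot compensate. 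The large arcs in $D^-_{2n-1}$ do not vanish, and Cauchy's theorem cannot be used there. Even if it could, it would make these terms vanish rather than cancel the whole-line integral.

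\textbf{The fix.} Deform instead in the complementary sectors between $\omega^{j+n}\R^+$ and $-\omega^{n}\R^+$, for $1\le n\le j$. There $\im\lambda^{2j+1}>0$, so $e^{i\lambda^{2j+1}t}\widehat u_0(\lambda)$ and $e^{i\lambda^{2j+1}t}F(t,\lambda)$ are controlled and the large-arc estimate applies. This gives $\int_{-\omega^{n}\R^+}\mathcal K_\ell=\int_{\omega^{j+n}\R^+}\mathcal K_\ell$.

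The sum therefore telescopes by pairing rays of consecutive indices, not the two rays with the same $n$. Only $-\omega^0\R^+=-\R^+$ and $\omega^{2j+1}\R^+=\R^+$ survive. The result is exactly $-\int_\R\mathcal K_\ell$, which cancels the whole first term at $x=0$, not a "matching part" of it.
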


\begin{proof}
All contour calculations below are first made on bounded parts of the contours. The smoothness and decay assumptions, together with the same estimates used in the derivation of \eqref{eq:utm-full-formula}, justify differentiation under the integrals, taking the limit $x\to0^+$, and then letting the contour radius tend to infinity. We verify the differential equation, the initial condition, and the boundary conditions separately.

First, since
\[\left(\partial_t+(-1)^{j+1}\partial_x^{2j+1}\right)e^{ikx+ik^{2j+1}t}=0,\]
the second and third terms in \eqref{eq:utm-full-formula} satisfy the homogeneous equation. By \eqref{eq:F},
\[\partial_tF(t,k)=e^{-ik^{2j+1}t}\widehat f(t,k),\]
and therefore
\[\left(\partial_t+(-1)^{j+1}\partial_x^{2j+1}\right)\frac{1}{2\pi}\int_\R e^{ikx+ik^{2j+1}t}\left(\widehat u_0(k)+F(t,k)\right)\,dk=\frac{1}{2\pi}\int_\R e^{ikx}\widehat f(t,k)\,dk=f(t,x).\]
When $\mathcal R_{j,\gamma}[u_0,\bphi,f]$ is nonzero, $j$ is odd and $\gamma>0$. In this case,
\[\left(\partial_t+(-1)^{j+1}\partial_x^{2j+1}\right)e^{-\gamma x+\gamma^{2j+1}t}=0.\]
Hence
\[\partial_tu+(-1)^{j+1}\partial_x^{2j+1}u=f.\]

We next verify the initial condition. Since $F(0,k)=0$, the first term in \eqref{eq:utm-full-formula} gives
\[\frac{1}{2\pi}\int_\R e^{ikx}\widehat u_0(k)\,dk=u_0(x),\qquad x>0.\]
At $t=0$, the remaining contour integrals may be closed in the sectors $D_{2p}^+$ because $e^{ikx}$ decays there for $x>0$. If $\gamma<0$, the pole $k=i\gamma$ lies in the lower half plane. If $\gamma>0$ and $j$ is even, the pole lies in $D_{j+1}^+$ and hence outside $D^+$. If $\gamma=0$, the singularity at $k=0$ is removable by Remark \ref{rem:gamma-zero}. Thus all remaining contour integrals vanish in these cases.

Suppose that $j$ is odd and $\gamma>0$. Then $i\gamma$ lies in $D_{2p_*}^+$, where
\[p_*=\frac{j+1}{2}.\]
By \eqref{eq:P-pn},
\[\operatorname{Res}_{k=i\gamma}\mathcal P_{p_*,n}(k)=-\frac{i\gamma Q_{p_*}(1)}{Q_{p_*}'(\alpha_{p_*,n})}.\]
Therefore, the second term in \eqref{eq:utm-full-formula} at $t=0$ contributes
\[-\gamma Q_{p_*}(1)e^{-\gamma x}\sum_{n=1}^{j+1}\frac{\widehat u_0(i\gamma\alpha_{p_*,n})+F(T,i\gamma\alpha_{p_*,n})}{Q_{p_*}'(\alpha_{p_*,n})},\]
while the third term contributes
\[ie^{-\gamma x}\sum_{\ell=1}^jc_{p_*,\ell}(i\gamma)^{2j+1-\ell}\widetilde\varphi_\ell(T,-i\gamma^{2j+1}).\]
By \eqref{eq:residue-term}, the sum of these two contributions is
\[-\mathcal R_{j,\gamma}[u_0,\bphi,f](0,x).\]
Hence all remaining terms cancel, and
\[u(0,x)=u_0(x),\qquad x>0.\]

It remains to verify the boundary conditions. Fix $1\le\ell\le j$. Since
\begin{equation}\label{eq:Robin-trace-exponential}
(\partial_x+\gamma)\partial_x^{\ell-1}e^{ikx}=i^\ell(k-i\gamma)k^{\ell-1}e^{ikx},
\end{equation}
and every term in \eqref{eq:residue-term} contains the factor $e^{-\gamma x}$, we have
\[(\partial_x+\gamma)\partial_x^{\ell-1}\mathcal R_{j,\gamma}[u_0,\bphi,f](t,0)=0.\]
Applying $(\partial_x+\gamma)\partial_x^{\ell-1}$ to \eqref{eq:utm-full-formula} and setting $x=0$, we obtain
\begin{equation}\label{eq:UTM-boundary-trace}
\begin{aligned}
(\partial_x+\gamma)\partial_x^{\ell-1}u(t,0)=&~{}\frac{i^\ell}{2\pi}\int_\R e^{ik^{2j+1}t}(k-i\gamma)k^{\ell-1}\left(\widehat u_0(k)+F(t,k)\right)\,dk\\
&~{}-\frac{i^\ell}{2\pi}\sum_{p=1}^j\sum_{n=1}^{j+1}\int_{\partial D_{2p}^+}e^{ik^{2j+1}t}(k-i\gamma)k^{\ell-1}\left(\widehat u_0(\alpha_{p,n}k)+F(T,\alpha_{p,n}k)\right)\mathcal P_{p,n}(k)\,dk\\
&~{}+\frac{i^\ell}{2\pi}\sum_{p=1}^j\sum_{q=1}^jc_{p,q}\int_{\partial D_{2p}^+}e^{ik^{2j+1}t}k^{2j+\ell-q}\widetilde\varphi_q(T,k^{2j+1})\,dk.
\end{aligned}
\end{equation}

We first replace $F(T,\alpha_{p,n}k)$ by $F(t,\alpha_{p,n}k)$ in the second term. Since $\alpha_{p,n}^{2j+1}=1$, \eqref{eq:F} gives
\[e^{ik^{2j+1}t}\left(F(T,\alpha_{p,n}k)-F(t,\alpha_{p,n}k)\right)=\int_t^T e^{ik^{2j+1}(t-\tau)}\widehat f(\tau,\alpha_{p,n}k)\,d\tau.\]
By \eqref{eq:P-pole-cancellation}, the factor $k-i\gamma$ removes the possible pole at $k=i\gamma$. By \eqref{eq:imaginary alpha k}, $\alpha_{p,n}k$ lies in the lower half plane, where $\widehat f(\tau,\cdot)$ is analytic. Hence the integrand is analytic in $D_{2p}^+$. Moreover, by \eqref{eq:negative imaginary k^2j+1} and $t-\tau\le0$, the exponential factor is bounded there. An integration by parts in $\tau$, followed by the same large arc estimate used in the proof of \eqref{eq:g-t-T-vanishing}, gives
\begin{equation}\label{eq:future-forcing-boundary-zero}
\int_{\partial D_{2p}^+}e^{ik^{2j+1}t}(k-i\gamma)k^{\ell-1}\left(F(T,\alpha_{p,n}k)-F(t,\alpha_{p,n}k)\right)\mathcal P_{p,n}(k)\,dk=0.
\end{equation}

We next compare the first two terms in \eqref{eq:UTM-boundary-trace}. Set
\[\mathcal K_\ell(t,k):=e^{ik^{2j+1}t}(k-i\gamma)k^{\ell-1}\left(\widehat u_0(k)+F(t,k)\right).\]
Parametrizing the lower and upper rays of $\partial D_{2p}^+$ by $k=a_pr$ and $k=a_p'r$, respectively, and using their orientations, \eqref{eq:P-ray-identity} and \eqref{eq:P-ray-identity-prime} give
\[\begin{aligned}
&~{}-\sum_{p=1}^j\sum_{n=1}^{j+1}\int_{\partial D_{2p}^+}e^{ik^{2j+1}t}(k-i\gamma)k^{\ell-1}\left(\widehat u_0(\alpha_{p,n}k)+F(t,\alpha_{p,n}k)\right)\mathcal P_{p,n}(k)\,dk\\
=&~{}\sum_{n=1}^{j+1}\left(\int_{-\omega^{n-1}\R^+}\mathcal K_\ell(t,k)\,dk-\int_{\omega^{j+n}\R^+}\mathcal K_\ell(t,k)\,dk\right).
\end{aligned}\]
For $1\le n\le j$, the rays
\[\omega^{j+n}\R^+,\qquad -\omega^n\R^+\]
bound a sector in the lower half plane in which
\[\im(k^{2j+1})>0.\]
The function $\mathcal K_\ell(t,k)$ is analytic there. Applying the same large arc estimate to
\[e^{ik^{2j+1}t}F(t,k)=\int_0^t e^{ik^{2j+1}(t-\tau)}\widehat f(\tau,k)\,d\tau\]
gives
\[\int_{-\omega^n\R^+}\mathcal K_\ell(t,k)\,dk=\int_{\omega^{j+n}\R^+}\mathcal K_\ell(t,k)\,dk,\qquad 1\le n\le j.\]
Since
\[-\omega^0=-1,\qquad \omega^{2j+1}=1,\]
all intermediate rays cancel and we obtain
\[-\sum_{p=1}^j\sum_{n=1}^{j+1}\int_{\partial D_{2p}^+}e^{ik^{2j+1}t}(k-i\gamma)k^{\ell-1}\left(\widehat u_0(\alpha_{p,n}k)+F(t,\alpha_{p,n}k)\right)\mathcal P_{p,n}(k)\,dk=-\int_\R\mathcal K_\ell(t,k)\,dk.\]
Together with \eqref{eq:future-forcing-boundary-zero}, this shows that the first two terms on the right hand side of \eqref{eq:UTM-boundary-trace} cancel.

It remains to evaluate the third term. Parametrizing the two rays of $\partial D_{2p}^+$ by
\[k=a_pr,\qquad k=a_p'r,\]
using
\[a_p^{2j+1}=-1,\qquad (a_p')^{2j+1}=1,\]
and then setting $\tau=r^{2j+1}$, we obtain
\[\begin{aligned}
(\partial_x+\gamma)\partial_x^{\ell-1}u(t,0)=&~{}-\frac{i^\ell}{2\pi(2j+1)}\sum_{q=1}^j\left(\sum_{p=1}^jc_{p,q}a_p^{\ell-q}\right)\int_0^\infty e^{-i\tau t}\tau^{\frac{\ell-q}{2j+1}}\widetilde\varphi_q(T,-\tau)\,d\tau\\
&~{}-\frac{i^\ell}{2\pi(2j+1)}\sum_{q=1}^j\left(\sum_{p=1}^jc_{p,q}(a_p')^{\ell-q}\right)\int_0^\infty e^{i\tau t}\tau^{\frac{\ell-q}{2j+1}}\widetilde\varphi_q(T,\tau)\,d\tau.
\end{aligned}\]
By \eqref{eq:c-ray-identity}, all terms with $q\neq\ell$ vanish. For $q=\ell$, we obtain
\[\begin{aligned}
(\partial_x+\gamma)\partial_x^{\ell-1}u(t,0)=&~{}\frac{1}{2\pi}\int_0^\infty e^{-i\tau t}\widetilde\varphi_\ell(T,-\tau)\,d\tau\\
&~{}+\frac{1}{2\pi}\int_0^\infty e^{i\tau t}\widetilde\varphi_\ell(T,\tau)\,d\tau\\
=&~{}\frac{1}{2\pi}\int_\R e^{i\tau t}\widetilde\varphi_\ell(T,\tau)\,d\tau\\
=&~{}\varphi_\ell(t).
\end{aligned}\]
The last equality follows from the definition of $\widetilde\varphi_\ell(T,\cdot)$ and Fourier inversion for the zero extension of $\varphi_\ell$ outside $(0,T)$. Hence
\[(\partial_x+\gamma)\partial_x^{\ell-1}u(t,0)=\varphi_\ell(t),\qquad 1\le\ell\le j.\]
Thus the function defined by \eqref{eq:utm-full-formula} satisfies \eqref{eq:linear-robin}.
\end{proof}

\begin{corollary}\label{cor:utm-equivalence}
For sufficiently smooth and decaying data, the differential formulation \eqref{eq:linear-robin} is equivalent to the UTM formulation \eqref{eq:utm-full-formula}.
\end{corollary}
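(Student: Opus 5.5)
The equivalence in Corollary \ref{cor:utm-equivalence} is the combination of Proposition \ref{prop:utm-formula} and Proposition \ref{prop:utm-verification}, closed by a uniqueness statement for smooth solutions of \eqref{eq:linear-robin}.

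\emph{Differential $\Rightarrow$ UTM.} If $u$ is a sufficiently smooth solution of \eqref{eq:linear-robin} whose spatial derivatives decay, then Proposition \ref{prop:utm-formula} shows that $u$ equals the right hand side of \eqref{eq:utm-full-formula}.

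\emph{UTM $\Rightarrow$ differential.} Conversely, if $u$ is defined by \eqref{eq:utm-full-formula} for smooth decaying data, Proposition \ref{prop:utm-verification} shows that $u$ satisfies the equation, the initial condition, and all $j$ Robin conditions. It remains to confirm that this function lies in the class required by Proposition \ref{prop:utm-formula}, namely that it is smooth and its spatial derivatives decay. This follows from the contour estimates used in the derivation:
\begin{itemize}
\item the $\partial D^+$ integrals decay in $x$ because the integrands contain $e^{ikx}$ with $\im k>0$ away from the origin;
\item the residue term contains the factor $e^{-\gamma x}$ with $\gamma>0$.
\end{itemize}

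\emph{Uniqueness.} Let $u_1,u_2$ be two smooth decaying solutions with the same data. Their difference $w$ solves the homogeneous problem with zero initial datum, zero forcing, and zero Robin data. Applying Proposition \ref{prop:utm-formula} to $w$, every term of \eqref{eq:utm-full-formula} vanishes:
\begin{itemize}
\item $\widehat w_0=0$ and $F=0$;
\item $\widetilde\varphi_\ell=0$ for $1\le\ell\le j$;
\item hence $\mathcal R_{j,\gamma}=0$ as well.
\end{itemize}
Therefore $w\equiv0$. So the two formulations describe the same unique smooth solution.

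The main obstacle is justifying that the UTM-defined function has the decay and regularity needed to feed it back into Proposition \ref{prop:utm-formula}. I would handle this with integration by parts in $k$ along the rays, which produces decay in $x$ provided the data are Schwartz class.
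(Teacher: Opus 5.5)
Your argument is correct and is essentially the paper's proof, which consists exactly of combining Proposition~\ref{prop:utm-formula} (differential $\Rightarrow$ UTM) with Proposition~\ref{prop:utm-verification} (UTM $\Rightarrow$ differential); your uniqueness step is a harmless, immediate consequence of the first proposition. Your extra check that the UTM-defined function lies back in the smooth decaying class is not needed for the equivalence as the paper states it, and as sketched it is the one shaky point: smoothness up to the corner $(t,x)=(0,0)$ also requires the compatibility conditions recorded in Remark~\ref{rem:intro-regularity-range}.
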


\begin{proof}
Proposition \ref{prop:utm-formula} shows that every sufficiently smooth and decaying solution of \eqref{eq:linear-robin} satisfies \eqref{eq:utm-full-formula}. Conversely, Proposition \ref{prop:utm-verification} shows that the function defined by \eqref{eq:utm-full-formula} satisfies \eqref{eq:linear-robin}.
\end{proof}

\section{Reduced linear IBVP with Robin boundary conditions}\label{sec:reduced-ibvp}
We consider
\begin{equation}\label{eq:reduced-ibvp}
\begin{cases}
\partial_tu+(-1)^{j+1}\partial_x^{2j+1}u=0, & 0<t<2,\ x>0,\\
u(0,x)=0, & x>0,\\
(\partial_x+\gamma)\partial_x^{\ell-1}u(t,0)=\varphi_\ell(t), & 0<t<2,\ 1\le\ell\le j.
\end{cases}
\end{equation}
We first assume that each $\varphi_\ell$ belongs to $C_0^\infty((0,2))$ and identify it with its zero extension to $\R$. Then, for $\zeta\in\C$,
\[\widetilde\varphi_\ell(2,\zeta)=\widehat\varphi_\ell(\zeta).\]
For $1\le p,\ell\le j$, define
\[u_{p,\ell}(t,x):=\int_{\partial D_{2p}^+}e^{ikx+ik^{2j+1}t}\frac{k^{2j+1-\ell}}{k-i\gamma}\widehat\varphi_\ell(k^{2j+1})\,dk.\]
By Corollary \ref{cor:utm-equivalence}, for smooth boundary data the solution of \eqref{eq:reduced-ibvp} is given by the specialization of \eqref{eq:utm-full-formula} with $u_0=0$ and $f=0$. Thus, if $j$ is even or $\gamma\le0$, then
\[S_j[0,\bphi,0]=\frac{1}{2\pi}\sum_{p=1}^j\sum_{\ell=1}^jc_{p,\ell}u_{p,\ell}.\]
If $j$ is odd and $\gamma>0$, then
\[S_j[0,\bphi,0]=\frac{1}{2\pi}\sum_{p=1}^j\sum_{\ell=1}^jc_{p,\ell}u_{p,\ell}+\mathcal R_{j,\gamma}[0,\bphi,0].\]
When $\gamma=0$, the apparent singularity at $k=0$ is removable, see Remark \ref{rem:gamma-zero}.

The following proposition establishes the estimates for the reduced solution operator and its extension to the natural boundary data space $\mathcal H_2^s$.
\begin{proposition}\label{prop:reduced-ibvp}
Let
\[-j-\frac12<s<\frac32\]
and let $\bphi\in\mathcal H_2^s$. The reduced solution operator defined above for smooth boundary data extends uniquely to $\mathcal H_2^s$. If $s\ge-j$, then
\begin{equation}\label{eq:reduced-sobolev}
\sup_{t\in[0,2]}\|S_j[0,\bphi,0](t)\|_{H^s(\R^+)}
\lesssim_s\|\bphi\|_{\mathcal H_2^s}.
\end{equation}
If
\[0\le b<\frac12,\qquad \frac12<\alpha\le1+\frac{s}{2j+1},\]
then
\begin{equation}\label{eq:reduced-X}
\|S_j[0,\bphi,0]\|_{X_{\Omega_2}^{s,b,\alpha}}
\lesssim_{s,b,\alpha}\|\bphi\|_{\mathcal H_2^s}.
\end{equation}
The implicit constants may depend on the fixed parameters $j$ and $\gamma$.
\end{proposition}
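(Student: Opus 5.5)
The plan is to estimate each piece $u_{p,\ell}$ separately and then treat the residue term $\mathcal R_{j,\gamma}[0,\bphi,0]$ on its own. Throughout we work with smooth data $\varphi_\ell\in C_0^\infty((0,2))$; the unique extension to $\mathcal H_2^s$ then follows by density and Lemma \ref{lem:boundary-extension}, since $-\frac12<r_\ell(s)<\frac12$ throughout the range and zero extension is bounded on $H^{r_\ell}$.

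\textbf{Reduction to rays.} Fix $p,\ell$ and split $\partial D_{2p}^+$ into its two rays $k=a_pr$ and $k=a_p'r$, $r>0$. On the ray $k=a_p'r$ we have $k^{2j+1}=r^{2j+1}$, so the phase $e^{ik^{2j+1}t}$ is purely oscillatory in $t$, and $e^{ikx}=e^{ia_p'rx}$ decays in $x$ since $\im a_p'>0$. On the ray $k=a_pr$ we have $k^{2j+1}=-r^{2j+1}$, with the same structure.

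\textbf{Splitting the frequency range.} We split $r\ge 1$ from $r\le 1$.
\begin{itemize}
\item For $r\ge1$, the multiplier $\frac{k^{2j+1-\ell}}{k-i\gamma}$ is comparable to $r^{2j-\ell}$, with no singularity once $r\ge 2|\gamma|$. The finitely many $r\le 2|\gamma|$ are absorbed into the low-frequency part.
\item For $r\le 1$ (or $r\le 2|\gamma|$), the only danger is the pole. It lies on the contour only if $i\gamma$ lies on a ray, which cannot happen because the imaginary axis is interior to $D_{j+1}^+$; for $\gamma=0$ the pole is removable (Remark \ref{rem:gamma-zero}). So on the contour $|k-i\gamma|\gtrsim_\gamma |k|+1$ when $\gamma\neq0$, and the low-frequency part is trivially bounded by $\|\varphi_\ell\|_{L^1}\lesssim\|\varphi_\ell\|_{H^{r_\ell}}$ (compact support in time, $r_\ell>-\frac12$ after a standard duality argument on $(0,2)$). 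We will verify that last claim carefully.
\end{itemize}

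\textbf{Sobolev estimate.} Change variables $\tau=r^{2j+1}$, so $dr\sim\tau^{-2j/(2j+1)}d\tau$. The high-frequency part becomes
\[\int e^{ia\tau^{1/(2j+1)}x}e^{\pm i\tau t}\tau^{\frac{2j-\ell}{2j+1}}\widehat\varphi_\ell(\pm\tau)\,\tau^{-\frac{2j}{2j+1}}d\tau.\]
We apply Lemma \ref{lem:ray-integral} in the variable $r$, with $q(r)=e^{\pm ir^{2j+1}t}r^{2j-\ell}\widehat\varphi_\ell(\pm r^{2j+1})$. Then
\[\|\bra r^s q\|_{L^2}^2\sim\int\bra\tau^{\frac{2s+4j-2\ell-2j}{2j+1}}|\widehat\varphi_\ell(\tau)|^2\,d\tau=\|\varphi_\ell\|_{H^{r_\ell}}^2,\]
which is uniform in $t$. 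This settles $s\ge0$. For $-j\le s<0$, Lemma \ref{lem:ray-integral} is unavailable, so we instead follow Himonas--Yan \cite{Himonas2022}: extend the ray integral to the whole line by a Laplace-transform boundedness argument (the kernel $e^{-\beta rx}$ as in the proof of Lemma \ref{lem:ray-integral}) and use duality. The Robin multiplier differs from their Dirichlet one only by the bounded factor $k/(k-i\gamma)$ at high frequency, so their argument transfers.

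\textbf{Bourgain-space estimate.} This is the main obstacle, and the plan is to adapt the Himonas--Yan ray-integral estimate. We multiply by a cutoff $\psi(t)$ and write $e^{ia rx}=e^{i\re(a)rx}e^{-\im(a)rx}$. Then we take the space-time Fourier transform of $\psi(t)\psi(x)\times$ (the ray integral), extending in $x$ by the standard even/Schwartz extension of $e^{-\beta rx}$.

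The transform is controlled by
\[\int\frac{r}{r^2+\xi^2}\,|\widehat\psi(\tau\mp r^{2j+1})|\,r^{2j-\ell}|\widehat\varphi_\ell(r^{2j+1})|\,dr.\]
From here we bound the $\bra\xi^s\bra{\tau-\xi^{2j+1}}^b$ weight, using $b<\frac12$ to integrate $\bra{\tau-\xi^{2j+1}}^{2b}|\widehat\psi(\tau-r^{2j+1})|^2$ against $\bra{r^{2j+1}-\xi^{2j+1}}^{2b}$. The restriction $s>-j-\frac12$ makes the $\xi$-integral of $\frac{r^2\bra\xi^{2s}}{(r^2+\xi^2)^2}$ behave like $r^{2s-1}$.

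The $\mathcal D^\alpha$ part restricts to $|\xi|\le1$ and gives $\int\bra\tau^{2\alpha}r^{-2}\cdots$. This is bounded by $\|\varphi_\ell\|_{H^{r_\ell}}$ exactly when $\alpha\le 1+\frac{s}{2j+1}$, which explains that hypothesis.

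\textbf{Residue term.} The residue $e^{-\gamma x+\gamma^{2j+1}t}\widehat\varphi_\ell(-i\gamma^{2j+1})$ is smooth and exponentially decaying in $x$. Its coefficient satisfies $|\widehat\varphi_\ell(-i\gamma^{2j+1})|\lesssim_\gamma\|\varphi_\ell\|_{H^{r_\ell}(0,2)}$ by pairing with the smooth function $e^{-\gamma^{2j+1}\tau}\mathbf 1_{(0,2)}$, which lies in $H^{-r_\ell}$ since $|r_\ell|<\frac12$. Hence after the cutoff $\psi(t)$ it is bounded in every $H^s$ and every $X^{s,b,\alpha}$.
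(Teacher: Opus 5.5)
Your outline (two rays, a low/high split in $k$, the residue coefficient via pairing with $e^{-\gamma^{2j+1}t}\mathbf 1_{[0,2]}\in H^{-r_\ell}$, and density through Lemma \ref{lem:boundary-extension}) matches the paper. Your $C_tH^s$ bound for $s\ge0$ via Lemma \ref{lem:ray-integral} is also correct. The gap is in the step you flag as the main obstacle. If you keep the kernel $\frac{r}{r^2+\xi^2}$ with the full multiplier $r^{2j-\ell}$ in front, the low spatial frequencies are too large.

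Indeed, $\int_\R\frac{r^2\bra{\xi}^{2s}}{(r^2+\xi^2)^2}\,d\xi\sim r^{2s-1}$ holds only for $-\frac12<s<\frac32$. For $s<-\frac12$ the integral is $\sim r^{-2}$, coming from $|\xi|\lesssim1$, so the threshold $s>-j-\frac12$ cannot come out of it. On $|\xi|\le1$ the integrand of your transform has size $r^{-1}r^{2j-\ell}$. After $\eta=r^{2j+1}$, the $\mathcal D^\alpha$ part becomes $\int\bra{\eta}^{2\alpha}|\eta|^{-\frac{2(\ell+1)}{2j+1}}|\widehat\varphi_\ell(\eta)|^2\,d\eta$. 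This is controlled by $\|\varphi_\ell\|_{H^{r_\ell}}^2$ only if $\alpha\le\frac{s+j+1}{2j+1}$, not $\alpha\le1+\frac{s}{2j+1}$. Since $\alpha>\frac12$, it fails for every $s\le-\frac12$; for $j=1$ this already happens on $-\frac34<s\le-\frac12$, inside the range of Theorem \ref{mainresult}.

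This is not merely a lossy bound. For the even extension, the transform at $\xi=0$ is $\int_\R e^{i(\re a)kx}e^{-(\im a)k|x|}\,dx=\frac{2\im a}{k}$, which is genuinely of size $k^{-1}$. The same count forces $b\le\frac{s+j+1}{2j+1}$ in the $X^{s,b}$ part. In addition, the $|\xi|^{-2}$ tail of the even extension forces $s+(2j+1)b<\frac32$. So the stated ranges of $s$, $b$, $\alpha$ are not reached.

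The missing idea is to extract $j$ spatial derivatives before extending. Using $\partial_x^je^{iakx}=(iak)^je^{iakx}$, write the high frequency ray integral as $\partial_x^j$ of the same integral with $k^{2j+1-\ell}$ replaced by $k^{j+1-\ell}$. Then extend the inner integral to $x<0$ by $e^{iakx}\rho((\im a)kx)$ with a smooth cutoff $\rho$. The spatial transform of the resulting extension is $(i\xi)^j$ times the rescaled Schwartz kernel $r^{-1}\widehat{\eta_a}(\xi/r)$, so the extension has vanishing moments up to order $j-1$. This buys three things:
\begin{itemize}
\item the factor $|\xi|^{2j}$ gives $\int_{|\xi|\le2r}\bra{\xi}^{2s}|\xi|^{2j}\,d\xi\lesssim r^{2(s+j)+1}$, which is exactly where $s>-j-\frac12$ enters;
\item it gains $r^{-j}$ on $|\xi|\le1$, which turns your condition into $\alpha\le1+\frac{s}{2j+1}$;
\item the Schwartz decay removes the large $|\xi|$ restriction (Lemmas \ref{lem:high-kernel}--\ref{lem:high-D}).
\end{itemize}

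The same device gives the $C_tH^s$ bound for $-j\le s<0$, which you only defer to Himonas--Yan. Apply Lemma \ref{lem:ray-integral} to the inner integral with $\sigma=s+j\ge0$, then use $\partial_x^j:H^{s+j}(\R^+)\to H^s(\R^+)$. The deferral is not a literal transfer either: the effective Robin multiplier $k^{2j-\ell}$ with data in $H^{r_\ell}$ corresponds to Dirichlet index $\ell+1$, which for $\ell=j$ lies outside the Dirichlet hierarchy.

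There are also two smaller errors.
\begin{itemize}
\item The bound $\|\varphi_\ell\|_{L^1}\lesssim\|\varphi_\ell\|_{H^{r_\ell}}$ is false for $r_\ell<0$; take $\chi(t)\sin(nt)$. What the low frequency part actually needs is $\int_{|\tau|\le C}|\widehat\varphi_\ell|^2\,d\tau\lesssim_C\|\varphi_\ell\|_{H^{r_\ell}}^2$, which holds for every $r_\ell$.
\item A spatial cutoff $\psi(x)$ is not admissible, because the extension must agree with the solution on all of $\Omega_2=(0,2)\times\R^+$.
\end{itemize}
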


\subsection{Contour decomposition}

Fix $1\le p,\ell\le j$. The boundary $\partial D_{2p}^+$ consists of the two rays with arguments
\[\frac{(2p-1)\pi}{2j+1}\qquad\text{and}\qquad \frac{2p\pi}{2j+1}.\]
Recall that
\[a_p=e^{i(2p-1)\pi/(2j+1)},\qquad a_p'=e^{i2p\pi/(2j+1)}.\]
Then $a_pk$ and $a_p'k$, $k\ge0$, parametrize the two boundary rays of $D_{2p}^+$. Since both rays lie in the open upper half plane,
\[\im a_p>0,\qquad \im a_p'>0.\]
Moreover, neither $\re a_p$ nor $\re a_p'$ vanishes, since neither of the angles
\[\frac{(2p-1)\pi}{2j+1},\qquad \frac{2p\pi}{2j+1}\]
can equal $\pi/2$.

Parametrizing the two rays of $\partial D_{2p}^+$ gives
\begin{equation}\label{eq:reduced-right-ray}
u_r^{p,\ell}(t,x):=\int_0^\infty e^{ia_pkx-ik^{2j+1}t}\frac{a_p^{2j+2-\ell}k^{2j+1-\ell}}{a_pk-i\gamma}\widehat\varphi_\ell(-k^{2j+1})\,dk
\end{equation}
and
\begin{equation}\label{eq:reduced-left-ray}
u_l^{p,\ell}(t,x):=-\int_0^\infty e^{ia_p'kx+ik^{2j+1}t}\frac{(a_p')^{2j+2-\ell}k^{2j+1-\ell}}{a_p'k-i\gamma}\widehat\varphi_\ell(k^{2j+1})\,dk.
\end{equation}
Thus
\[u_{p,\ell}=u_r^{p,\ell}+u_l^{p,\ell}.\]

For $k\ge0$,
\[|a_pk-i\gamma|^2=(\re a_p)^2k^2+((\im a_p)k-\gamma)^2\ge(\re a_p)^2k^2,\]
and the same argument applies to $a_p'$. Since $1\le p\le j$ and $\re a_p,\re a_p'\neq0$, we have
\begin{equation}\label{eq:reduced-denominator}
|a_pk-i\gamma|\gtrsim_j k,\qquad |a_p'k-i\gamma|\gtrsim_j k,\qquad k\ge0.
\end{equation}

We split
\[u_r^{p,\ell}=u_{r,0}^{p,\ell}+u_{r,1}^{p,\ell},\]
where $u_{r,0}^{p,\ell}$ and $u_{r,1}^{p,\ell}$ are the low and high frequency contributions corresponding to $0\le k\le1$ and $k>1$, respectively. The left contribution is decomposed and estimated in the same way.

\subsection{High frequency estimates}

For the high frequency contribution $u_{r,1}^{p,\ell}$, we write
\begin{equation}\label{eq:reduced-high-derivative}
u_{r,1}^{p,\ell}(t,x)=\frac{a_p^{2j+2-\ell}}{(ia_p)^j}\partial_x^j\int_1^\infty e^{-ik^{2j+1}t}e^{ia_pkx}\frac{k^{j+1-\ell}}{a_pk-i\gamma}\widehat\varphi_\ell(-k^{2j+1})\,dk.
\end{equation}
Let $\rho\in C^\infty(\R)$ satisfy
\[0\le\rho\le1,\qquad \rho(y)=1\quad\text{for }y\ge0,\qquad \rho(y)=0\quad\text{for }y\le-1.\]
We extend \eqref{eq:reduced-high-derivative} to $x\in\R$ by replacing $e^{ia_pkx}$ with
\[e^{ia_pkx}\rho((\im a_p)kx).\]
Since $\im a_p>0$, this extension agrees with the original function for $x>0$. Keeping the same notation for the extension, define
\[\eta_{a_p}(y):=e^{i(\re a_p)y/\im a_p}e^{-y}\rho(y).\]
Note that $\eta_{a_p} \in \mathcal S(\R)$. After the change of variables $\tau=-k^{2j+1}$, we obtain
\begin{equation}\label{eq:reduced-high-tau}
u_{r,1}^{p,\ell}(t,x)=\frac{a_p^{j+2-\ell}}{i^j(2j+1)}\partial_x^j\int_{-\infty}^{-1}e^{i\tau t}\eta_{a_p}\left((\im a_p)(-\tau)^{1/(2j+1)}x\right)\frac{(-\tau)^{-(j+\ell-1)/(2j+1)}}{a_p(-\tau)^{1/(2j+1)}-i\gamma}\widehat\varphi_\ell(\tau)\,d\tau.
\end{equation}
For $\tau<-1$, define
\[H_{a_p}(\tau,\xi):=\int_\R e^{-ix\xi}\eta_{a_p}\left((\im a_p)(-\tau)^{1/(2j+1)}x\right)\,dx.\]

\begin{lemma}\label{lem:high-kernel}
For every $n\ge0$,
\begin{equation}\label{eq:high-kernel-decay}
|H_{a_p}(\tau,\xi)|\lesssim_{j,n}|\tau|^{-1/(2j+1)}\left(\frac{|\tau|^{1/(2j+1)}}{|\xi|+|\tau|^{1/(2j+1)}}\right)^n
\end{equation}
for $\tau<-1$ and $\xi\in\R$. Moreover, if $s>-j-\frac12$ and $b\ge0$, then
\begin{equation}\label{eq:high-kernel-weighted}
\int_\R\bra{\xi}^{2s}\bra{\tau-\xi^{2j+1}}^{2b}|\xi^jH_{a_p}(\tau,\xi)|^2\,d\xi \lesssim_{s,b}|\tau|^{\frac{2(s+j)+2(2j+1)b-1}{2j+1}}.
\end{equation}
\end{lemma}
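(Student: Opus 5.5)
Set $\lambda:=(\im a_p)|\tau|^{1/(2j+1)}$, so $\lambda\sim_j|\tau|^{1/(2j+1)}$ for $\tau<-1$. The plan is to reduce the first estimate to a scaling identity for a single Schwartz function, and then integrate that pointwise bound with the weights.

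\emph{Step 1: scaling.} The change of variables $y=\lambda x$ gives
\[H_{a_p}(\tau,\xi)=\lambda^{-1}\widehat{\eta_{a_p}}(\xi/\lambda).\]
Because $\eta_{a_p}\in\mathcal S(\R)$, its Fourier transform is also Schwartz. Hence for every $n\ge0$,
\[|\widehat{\eta_{a_p}}(\zeta)|\lesssim_n\langle\zeta\rangle^{-n}\sim(1+|\zeta|)^{-n}.\]
Take $\zeta=\xi/\lambda$. Then
\[(1+|\xi|/\lambda)^{-n}=\left(\frac{\lambda}{\lambda+|\xi|}\right)^n.\]
Replacing $\lambda$ by $|\tau|^{1/(2j+1)}$ costs only constants depending on $j$, since $\im a_p$ is fixed. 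This gives \eqref{eq:high-kernel-decay}. There is no real obstacle here: $\eta_{a_p}$ and its constants depend only on $a_p$, which takes finitely many values for fixed $j$.

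\emph{Step 2: reduce the weighted integral.} Write $\mu:=|\tau|^{1/(2j+1)}\ge1$ and $\tau=-\mu^{2j+1}$. We need
\[I:=\int_\R\langle\xi\rangle^{2s}\langle\tau-\xi^{2j+1}\rangle^{2b}|\xi|^{2j}|H|^2\,d\xi\lesssim\mu^{2(s+j)+2(2j+1)b-1}.\]
Step 1 gives $|H|^2\lesssim\mu^{-2}(\mu/(\mu+|\xi|))^{2n}$, with $n$ to be chosen large. Since $b\ge0$,
\[\langle\tau-\xi^{2j+1}\rangle\lesssim\mu^{2j+1}+|\xi|^{2j+1}\lesssim(\mu+|\xi|)^{2j+1}.\]
So the modulation weight is bounded by $(\mu+|\xi|)^{2(2j+1)b}$. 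I will then split the $\xi$-integral into three regions.

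\emph{Step 3: the three regions.}
\begin{itemize}
\item[(a)] For $|\xi|\ge\mu$: the integrand is bounded by
\[\mu^{-2+2n}|\xi|^{2s+2j+2(2j+1)b-2n}.\]
Choosing $n$ large makes this integrable at infinity, and the integral over $|\xi|\ge\mu$ is $\lesssim\mu^{2s+2j+2(2j+1)b-1}$.
\item[(b)] For $1\le|\xi|\le\mu$: the integrand is bounded by
\[\mu^{-2+2(2j+1)b}|\xi|^{2s+2j}.\]
Since $s>-j-\tfrac12$, the exponent satisfies $2s+2j>-1$. Hence $\int_1^\mu|\xi|^{2s+2j}\,d\xi\lesssim\mu^{2s+2j+1}$, which yields the same power of $\mu$.
\item[(c)] For $|\xi|\le1$: the contribution is bounded by $\mu^{-2+2(2j+1)b}$. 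This is at most the target because $\mu\ge1$ and $2s+2j+1>0$.
\end{itemize}

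The only delicate point is region (b). This is exactly where the hypothesis $s>-j-\tfrac12$ is used: it makes $|\xi|^{2s+2j}$ integrable near the origin. It also makes the low region (c) dominated by the target power, since that power is at least $\mu^{-2+2(2j+1)b}$. Adding the three regions and using $\mu^{2j+1}=|\tau|$ gives \eqref{eq:high-kernel-weighted}.
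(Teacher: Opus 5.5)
Your proof is correct and follows essentially the same route as the paper: the scaling identity $H_{a_p}(\tau,\xi)=\lambda^{-1}\widehat{\eta_{a_p}}(\xi/\lambda)$ with Schwartz decay gives the pointwise bound, and the weighted integral is split at scale $|\tau|^{1/(2j+1)}$. The paper uses the two regions $|\xi|\le 2R$ and $|\xi|>2R$, which merges your regions (b) and (c). One small correction to your commentary: in region (b) the hypothesis $2s+2j>-1$ is not about integrability near the origin, since that region excludes it; it is what guarantees $\int_1^\mu|\xi|^{2s+2j}\,d\xi\lesssim\mu^{2s+2j+1}$ rather than a logarithm or a constant.
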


\begin{proof}
Set
\[R:=(-\tau)^{1/(2j+1)}.\]
Since $\tau<-1$, we have $R>1$. By the definition of $H_{a_p}$ and the change of variables $y=(\im a_p)Rx$,
\[H_{a_p}(\tau,\xi)=\frac{1}{(\im a_p)R}\widehat{\eta_{a_p}}\left(\frac{\xi}{(\im a_p)R}\right).\]
Since $\eta_{a_p}\in\mathcal S(\R)$, for every $n\ge0$,
\[|H_{a_p}(\tau,\xi)|\lesssim_{j,n}R^{-1}\left(\frac{R}{|\xi|+R}\right)^n.\]
This proves \eqref{eq:high-kernel-decay}.

To prove \eqref{eq:high-kernel-weighted}, we split the $\xi$ integral into $|\xi|\le2R$ and $|\xi|>2R$. On $|\xi|\le2R$,
\[\bra{\tau-\xi^{2j+1}}^{2b}\lesssim R^{2(2j+1)b}.\]
Since $s+j>-\frac12$,
\[\int_{|\xi|\le2R}\bra{\xi}^{2s}|\xi|^{2j}\,d\xi\lesssim_sR^{2(s+j)+1}.\]
Using \eqref{eq:high-kernel-decay} with $n=0$, we obtain
\[\int_{|\xi|\le2R}\bra{\xi}^{2s}\bra{\tau-\xi^{2j+1}}^{2b}|\xi^jH_{a_p}(\tau,\xi)|^2\,d\xi\lesssim_{s,b}R^{2(s+j)+2(2j+1)b-1}.\]

On $|\xi|>2R$, we have
\[\bra{\tau-\xi^{2j+1}}\sim\bra{\xi}^{2j+1}.\]
Choosing $n$ sufficiently large in \eqref{eq:high-kernel-decay}, we obtain
\[\begin{aligned}
\int_{|\xi|>2R}\bra{\xi}^{2s}\bra{\tau-\xi^{2j+1}}^{2b}|\xi^jH_{a_p}(\tau,\xi)|^2\,d\xi
\lesssim&~{}R^{2n-2}\int_{|\xi|>2R}|\xi|^{2(s+j)+2(2j+1)b-2n}\,d\xi\\
\lesssim_{s,b}&~{}R^{2(s+j)+2(2j+1)b-1}.
\end{aligned}\]
Combining the two regions and using $R=|\tau|^{1/(2j+1)}$ gives \eqref{eq:high-kernel-weighted}.
\end{proof}

The estimate in Lemma \ref{lem:high-kernel} first gives the $X^{s,b}$ space bound for the high frequency contribution.

\begin{lemma}\label{lem:high-X}
If $s>-j-\frac12$ and $b\ge0$, then
\begin{equation}\label{eq:high-X-general}
\|u_{r,1}^{p,\ell}\|_{X^{s,b}(\R^2)}
\lesssim_{s,b}
\|\varphi_\ell\|_{H^{\frac{s+(2j+1)b-\ell-\frac12}{2j+1}}(\R)}.
\end{equation}
In particular, if $b<\frac12$, then
\begin{equation}\label{eq:high-X-boundary}
\|u_{r,1}^{p,\ell}\|_{X^{s,b}(\R^2)}
\lesssim_{s,b}
\|\varphi_\ell\|_{H^{r_\ell}(\R)}.
\end{equation}
\end{lemma}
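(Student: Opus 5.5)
We compute the space–time Fourier transform of $u_{r,1}^{p,\ell}$ directly from \eqref{eq:reduced-high-tau}. The time integral is already an inverse Fourier transform in $t$. Applying $\partial_x^j$ produces the factor $(i\xi)^j$, and the spatial Fourier transform of the profile $\eta_{a_p}((\im a_p)(-\tau)^{1/(2j+1)}x)$ is exactly $H_{a_p}(\tau,\xi)$. Therefore, up to constants depending only on $j$ and $p$,
\[
\mathcal F(u_{r,1}^{p,\ell})(\tau,\xi)=c\,\mathbf 1_{\tau<-1}\,\xi^jH_{a_p}(\tau,\xi)\frac{(-\tau)^{-(j+\ell-1)/(2j+1)}}{a_p(-\tau)^{1/(2j+1)}-i\gamma}\widehat\varphi_\ell(\tau).
\]

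By \eqref{eq:reduced-denominator} with $k=(-\tau)^{1/(2j+1)}$, the denominator has modulus $\gtrsim|\tau|^{1/(2j+1)}$. Hence the $\tau$-dependent multiplier in front of $\xi^jH_{a_p}$ is bounded by $|\tau|^{-(j+\ell)/(2j+1)}$. The main structural point is that the only possible singularity, at $k=i\gamma$, does not meet the ray. The implicit constant then depends on $j$ but not on $\gamma$.

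We now insert this into the definition of the $X^{s,b}$ norm and integrate first in $\xi$ for fixed $\tau<-1$. Lemma \ref{lem:high-kernel}, estimate \eqref{eq:high-kernel-weighted}, applies because $s>-j-\frac12$ and $b\ge0$. It gives
\[
\|u_{r,1}^{p,\ell}\|_{X^{s,b}}^2\lesssim\int_{-\infty}^{-1}|\tau|^{\frac{2(s+j)+2(2j+1)b-1}{2j+1}}|\tau|^{-\frac{2(j+\ell)}{2j+1}}|\widehat\varphi_\ell(\tau)|^2\,d\tau .
\]
The total exponent of $|\tau|$ is $\frac{2s+2(2j+1)b-2\ell-1}{2j+1}=2\cdot\frac{s+(2j+1)b-\ell-\frac12}{2j+1}$. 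Since $|\tau|\sim\langle\tau\rangle$ on $\tau<-1$, this is bounded by the square of the $H^{\frac{s+(2j+1)b-\ell-1/2}{2j+1}}(\R)$ norm of $\varphi_\ell$, which proves \eqref{eq:high-X-general}.

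For \eqref{eq:high-X-boundary}, observe that $b<\frac12$ gives $(2j+1)b-\frac12<j$. Hence the Sobolev index in \eqref{eq:high-X-general} is strictly less than $\frac{s+j-\ell}{2j+1}=r_\ell$, and the claim follows from the embedding $H^{r_\ell}\hookrightarrow H^{\sigma}$ for $\sigma\le r_\ell$.

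The only genuinely delicate step is the first one. It must be justified that the Fourier transform of the extended function (via the cutoff $\rho$) factors as claimed. This needs Fubini in $(\tau,x)$ and the identification of $\partial_x^j$ with multiplication by $(i\xi)^j$ for the smooth, compactly supported boundary data assumed in this subsection. The general case then follows by density.
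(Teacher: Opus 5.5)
Your proposal is correct and follows the paper's proof essentially step for step. You read off the space--time Fourier transform from \eqref{eq:reduced-high-tau}, bound the denominator using \eqref{eq:reduced-denominator}, integrate in $\xi$ with \eqref{eq:high-kernel-weighted}, and then compare Sobolev indices when $b<\frac12$.
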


\begin{proof}
By \eqref{eq:reduced-high-tau},
\[|\mathcal F(u_{r,1}^{p,\ell})(\tau,\xi)|\lesssim\mathbf{1}_{(-\infty,-1)}(\tau)|\xi|^j|H_{a_p}(\tau,\xi)|\frac{|\tau|^{-\frac{j+\ell-1}{2j+1}}}{|a_p(-\tau)^{\frac{1}{2j+1}}-i\gamma|}|\widehat\varphi_\ell(\tau)|.\]
By \eqref{eq:reduced-denominator},
\[|a_p(-\tau)^{\frac{1}{2j+1}}-i\gamma|\gtrsim_j|\tau|^{\frac{1}{2j+1}},\qquad \tau<-1.\]
Using Lemma \ref{lem:high-kernel}, we obtain
\[\begin{aligned}
\|u_{r,1}^{p,\ell}\|_{X^{s,b}(\R^2)}^2\lesssim&~{}\int_{-\infty}^{-1}|\tau|^{\frac{2(s+j)+2(2j+1)b-1}{2j+1}}|\tau|^{-\frac{2(j+\ell)}{2j+1}}|\widehat\varphi_\ell(\tau)|^2\,d\tau\\
=&~{}\int_{-\infty}^{-1}|\tau|^{\frac{2s+2(2j+1)b-2\ell-1}{2j+1}}|\widehat\varphi_\ell(\tau)|^2\,d\tau\\
\lesssim&~{}\|\varphi_\ell\|_{H^{\frac{s+(2j+1)b-\ell-\frac12}{2j+1}}(\R)}^2.
\end{aligned}\]
This proves \eqref{eq:high-X-general}. If $b<\frac12$, then
\[\frac{s+(2j+1)b-\ell-\frac12}{2j+1}<\frac{s+j-\ell}{2j+1}=r_\ell,\]
and therefore \eqref{eq:high-X-boundary} follows.
\end{proof}

We next estimate the $\mathcal D^\alpha$ norm of the high frequency contribution.

\begin{lemma}\label{lem:high-D}
If
\[s>-j-\frac12,\qquad \frac12<\alpha\le1+\frac{s}{2j+1},\]
then
\begin{equation}\label{eq:high-D}
\|u_{r,1}^{p,\ell}\|_{\mathcal D^\alpha(\R^2)}\lesssim_{s,\alpha}\|\varphi_\ell\|_{H^{r_\ell}(\R)}.
\end{equation}
\end{lemma}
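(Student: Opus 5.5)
We estimate the $\mathcal D^\alpha$ norm directly on the Fourier side, reusing the representation \eqref{eq:reduced-high-tau} and the kernel bound \eqref{eq:high-kernel-decay}. Taking the space-time Fourier transform of \eqref{eq:reduced-high-tau}, the time variable $\tau$ is already the Fourier dual variable. The bound \eqref{eq:reduced-denominator} then gives, as in the proof of Lemma \ref{lem:high-X},
\[|\mathcal F(u_{r,1}^{p,\ell})(\tau,\xi)|\lesssim\mathbf 1_{(-\infty,-1)}(\tau)|\xi|^j|H_{a_p}(\tau,\xi)||\tau|^{-\frac{j+\ell}{2j+1}}|\widehat\varphi_\ell(\tau)|.\]
Since the $\mathcal D^\alpha$ norm only involves $|\xi|\le1$, we use \eqref{eq:high-kernel-decay} with $n=0$, namely $|H_{a_p}(\tau,\xi)|\lesssim|\tau|^{-1/(2j+1)}$. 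We also use the trivial bound $|\xi|^j\le1$.

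This yields
\[\|u_{r,1}^{p,\ell}\|_{\mathcal D^\alpha(\R^2)}^2\lesssim\int_{-\infty}^{-1}\int_{-1}^1\bra{\tau}^{2\alpha}|\tau|^{-\frac{2}{2j+1}}|\tau|^{-\frac{2(j+\ell)}{2j+1}}|\widehat\varphi_\ell(\tau)|^2\,d\xi\,d\tau\lesssim\int_{-\infty}^{-1}|\tau|^{2\alpha-\frac{2(j+\ell+1)}{2j+1}}|\widehat\varphi_\ell(\tau)|^2\,d\tau.\]
It remains to compare the exponent $\alpha-\frac{j+\ell+1}{2j+1}$ with $r_\ell=\frac{s+j-\ell}{2j+1}$. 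The required inequality $\alpha-\frac{j+\ell+1}{2j+1}\le\frac{s+j-\ell}{2j+1}$ is equivalent to $\alpha\le\frac{s+2j+1}{2j+1}=1+\frac{s}{2j+1}$, which is exactly the hypothesis. Since $|\tau|>1$, the weight is therefore bounded by $\bra{\tau}^{2r_\ell}$, and the integral is controlled by $\|\varphi_\ell\|_{H^{r_\ell}(\R)}^2$.

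The hypothesis $s>-j-\frac12$ enters only to make \eqref{eq:reduced-high-tau} and Lemma \ref{lem:high-kernel} available. The only step that needs care is keeping track of the factor $|\tau|^{-1/(2j+1)}$ from the kernel $H_{a_p}$. Together with the denominator $|a_p(-\tau)^{1/(2j+1)}-i\gamma|\gtrsim|\tau|^{1/(2j+1)}$, this factor produces precisely the threshold $\alpha\le1+\frac{s}{2j+1}$; losing either factor would give a strictly worse condition. Apart from this, the argument is a direct weighted $L^2$ computation, and the left-ray contribution $u_{l,1}^{p,\ell}$ is handled identically, with $\tau>1$ and $a_p'$ in place of $a_p$.
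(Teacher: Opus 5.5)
Your proposal is correct and follows essentially the same argument as the paper. You restrict to $|\xi|\le1$, use the $n=0$ case of \eqref{eq:high-kernel-decay} together with \eqref{eq:reduced-denominator}, and reduce to the exponent comparison $\alpha-\frac{j+\ell+1}{2j+1}\le r_\ell$, which is equivalent to the hypothesis $\alpha\le1+\frac{s}{2j+1}$.
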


\begin{proof}
For $|\xi|\le1$, \eqref{eq:high-kernel-decay} with $n=0$ gives
\[|H_{a_p}(\tau,\xi)|\lesssim_j|\tau|^{-\frac{1}{2j+1}},\qquad \tau<-1.\]
Together with \eqref{eq:reduced-denominator}, this yields
\[\|u_{r,1}^{p,\ell}\|_{\mathcal D^\alpha(\R^2)}^2\lesssim\int_{-\infty}^{-1}\langle\tau\rangle^{2\alpha}|\tau|^{-\frac{2(j+\ell+1)}{2j+1}}|\widehat\varphi_\ell(\tau)|^2\,d\tau.\]
Since
\[\alpha-\frac{j+\ell+1}{2j+1}\le\frac{s+j-\ell}{2j+1}=r_\ell,\]
we obtain \eqref{eq:high-D}.
\end{proof}

Finally, we estimate the high frequency contribution in $C_tH_x^s$.

\begin{lemma}\label{lem:high-sobolev}
If $s\ge-j$, then
\begin{equation}\label{eq:high-sobolev}
\sup_{t\in[0,2]}\|u_{r,1}^{p,\ell}(t)\|_{H^s(\R^+)}\lesssim_s\|\varphi_\ell\|_{H^{r_\ell}(\R)}.
\end{equation}
\end{lemma}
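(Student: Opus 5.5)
We work directly with the form \eqref{eq:reduced-right-ray} of $u_{r,1}^{p,\ell}$ restricted to $k>1$, on $x>0$, without the derivative trick \eqref{eq:reduced-high-derivative}. The plan is to reduce everything to Lemma \ref{lem:ray-integral} when $s\ge0$ and to handle $-j\le s<0$ by a duality/interpolation argument. In both cases the goal is to control $\|u_{r,1}^{p,\ell}(t)\|_{H^s(\R^+)}$ by $\|\langle k\rangle^{s}q_t\|_{L^2(1,\infty)}$-type quantities. Here
\[q_t(k):=\mathbf 1_{k>1}e^{-ik^{2j+1}t}\frac{a_p^{2j+2-\ell}k^{2j+1-\ell}}{a_pk-i\gamma}\widehat\varphi_\ell(-k^{2j+1}).\]
The time factor has modulus one, so all bounds will be uniform in $t\in[0,2]$.

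\emph{Step 1 ($s\ge0$).} Apply Lemma \ref{lem:ray-integral} with $a=a_p$, using $\im a_p>0$:
\[\|u_{r,1}^{p,\ell}(t)\|_{H^s(\R^+)}\lesssim\|\langle k\rangle^sq_t\|_{L^2(1,\infty)}.\]
By \eqref{eq:reduced-denominator}, $|q_t(k)|\lesssim k^{2j-\ell}|\widehat\varphi_\ell(-k^{2j+1})|$. Changing variables $\tau=k^{2j+1}$, so $dk\sim\tau^{-2j/(2j+1)}d\tau$, gives
\[\int_1^\infty k^{2s+4j-2\ell}|\widehat\varphi_\ell(-k^{2j+1})|^2dk\sim\int_1^\infty\tau^{\frac{2s+4j-2\ell-2j}{2j+1}}|\widehat\varphi_\ell(-\tau)|^2d\tau.\]
The exponent equals $2r_\ell$, so this is bounded by $\|\varphi_\ell\|_{H^{r_\ell}(\R)}^2$. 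This settles $s\ge0$.

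\emph{Step 2 ($-j\le s<0$).} Here we write $u_{r,1}^{p,\ell}=\partial_x^{j}v$ as in \eqref{eq:reduced-high-derivative}, with
\[v(t,x)=c\int_1^\infty e^{ia_pkx}k^{-j}q_t(k)\,dk.\]
Since $\partial_x^{j}:H^{s+j}(\R^+)\to H^{s}(\R^+)$ is bounded for the restriction-space norms (extend, differentiate, restrict), and $s+j\ge0$, Lemma \ref{lem:ray-integral} applied to $v$ gives
\[\|u_{r,1}^{p,\ell}(t)\|_{H^s(\R^+)}\lesssim\|\langle k\rangle^{s+j}k^{-j}q_t\|_{L^2(1,\infty)}\sim\|\langle k\rangle^{s}q_t\|_{L^2(1,\infty)}\]
on $k>1$. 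Step 1's change of variables then again yields $\|\varphi_\ell\|_{H^{r_\ell}(\R)}$. This is exactly why the restriction $s\ge-j$ appears.

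The main point to check is the boundedness of $\partial_x^j$ from $H^{s+j}(\R^+)$ to $H^s(\R^+)$ in the restriction-norm definition. This is immediate, since $\partial_x^j$ commutes with restriction and is bounded $H^{s+j}(\R)\to H^s(\R)$ for every real $s$. The remaining potential subtlety is justifying Lemma \ref{lem:ray-integral} for $q\in L^2$ without smoothness, and then passing from smooth $\varphi_\ell$ to general data by density. Both are routine.
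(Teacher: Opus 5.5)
Your proposal is correct and is essentially the paper's proof. Your Step 2 is exactly the argument the paper gives: factor out $\partial_x^j$, apply Lemma \ref{lem:ray-integral} with $\sigma=s+j\ge0$, change variables $\tau=-k^{2j+1}$ to produce the weight $|\tau|^{2r_\ell}$, and use boundedness of $\partial_x^j:H^{s+j}(\R^+)\to H^s(\R^+)$.

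Since Step 2 already covers every $s\ge-j$, your separate Step 1 for $s\ge0$ is redundant. The duality/interpolation argument you announce at the start is never actually needed.
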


\begin{proof}
To estimate $u_{r,1}^{p,\ell}$ in $H^s(\R^+)$, we extract the $j$ spatial derivatives in \eqref{eq:reduced-high-derivative} and write
\[u_{r,1}^{p,\ell}=\partial_x^jw_{r,1}^{p,\ell},\]
where
\[w_{r,1}^{p,\ell}(t,x):=\frac{a_p^{2j+2-\ell}}{(ia_p)^j}\int_1^\infty e^{-ik^{2j+1}t}e^{ia_pkx}\frac{k^{j+1-\ell}}{a_pk-i\gamma}\widehat\varphi_\ell(-k^{2j+1})\,dk.\]
By \eqref{eq:reduced-denominator},
\[\left|\frac{k^{j+1-\ell}}{a_pk-i\gamma}\right|\lesssim_jk^{j-\ell},\qquad k\ge1.\]
Since $s+j\ge0$ and $\im a_p>0$, Lemma \ref{lem:ray-integral} gives
\[\begin{aligned}
\|w_{r,1}^{p,\ell}(t)\|_{H^{s+j}(\R^+)}^2\lesssim&~{}\int_1^\infty k^{2(s+2j-\ell)}|\widehat\varphi_\ell(-k^{2j+1})|^2\,dk\\
=&~{}\frac{1}{2j+1}\int_{-\infty}^{-1}|\tau|^{\frac{2(s+j-\ell)}{2j+1}}|\widehat\varphi_\ell(\tau)|^2\,d\tau\\
\lesssim&~{}\|\varphi_\ell\|_{H^{r_\ell}(\R)}^2.
\end{aligned}\]
The boundedness of
\[\partial_x^j:H^{s+j}(\R^+)\longrightarrow H^s(\R^+)\]
proves \eqref{eq:high-sobolev}.
\end{proof}

The same arguments apply to the high frequency left contribution after replacing $a_p$ by $a_p'$ and using the change of variables $\tau=k^{2j+1}$.

\subsection{Low frequency estimates}

For the high frequency contribution, we used the decay estimates for $H_{a_p}$ established in Lemma \ref{lem:high-kernel}. On the bounded frequency interval $0\le k\le1$, we instead construct a smooth spatial extension and estimate its derivatives directly.

Let $\vartheta\in C^\infty(\R)$ be a smooth function satisfying
\begin{equation}\label{eq:vartheta}
\vartheta(x)=x\quad\text{for }x\ge0,\qquad \vartheta(x)=-x\quad\text{for }x\le-1.
\end{equation}
We extend the low frequency right contribution to $\R^2$ by
\begin{equation}\label{eq:reduced-low-extension}
u_{r,0}^{p,\ell}(t,x):=\int_0^1e^{-ik^{2j+1}t}e^{ia_pk\vartheta(x)}\frac{a_p^{2j+2-\ell}k^{2j+1-\ell}}{a_pk-i\gamma}\widehat\varphi_\ell(-k^{2j+1})\,dk.
\end{equation}
This agrees with the original contour contribution for $x>0$. By \eqref{eq:reduced-denominator},
\begin{equation}\label{eq:reduced-low-symbol}
\left|\frac{k^{2j+1-\ell}}{a_pk-i\gamma}\right|\lesssim_j k^{2j-\ell},\qquad 0<k\le1.
\end{equation}
When $\gamma=0$, the quotient is extended continuously to $k=0$ as in Remark \ref{rem:gamma-zero}.

\begin{lemma}\label{lem:low-derivative}
For every $m_1,m_2\in\N_0$,
\begin{equation}\label{eq:low-derivative}
\sup_{|t|\le4}\left\|\partial_x^{m_1}\partial_t^{m_2}[\psi_2(t)u_{r,0}^{p,\ell}(t)]\right\|_{L^2(\R)}^2\lesssim_{m_1,m_2}\int_{-1}^0|\widehat\varphi_\ell(\tau)|^2\,d\tau,
\end{equation}
where $\psi_2$ is defined in \eqref{eq:psi_T} with $T=2$.
\end{lemma}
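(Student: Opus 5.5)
We differentiate under the integral in \eqref{eq:reduced-low-extension}. This is legitimate because the $k$-integration runs over the compact interval $[0,1]$, and the integrand is smooth in $(t,x)$ with a symbol bounded by \eqref{eq:reduced-low-symbol}. By Leibniz' rule, $\partial_t^{m_2}[\psi_2(t)u_{r,0}^{p,\ell}]$ is a finite sum of terms $\psi_2^{(m_2-\nu)}(t)\,\partial_t^\nu u_{r,0}^{p,\ell}$. Each $t$-derivative brings down a factor $(-ik^{2j+1})$, bounded by $1$ on $[0,1]$. Each $x$-derivative of $e^{ia_pk\vartheta(x)}$ produces, by Faà di Bruno, a finite sum of products of powers $(ia_pk)^{\mu}$ with $\mu\le m_1$ and derivatives $\vartheta^{(\beta)}(x)$. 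Since $\vartheta'$ is bounded and $\vartheta^{(\beta)}$ has compact support for $\beta\ge2$, all these prefactors are uniformly bounded for $k\in[0,1]$. Consequently, for $|t|\le4$ we obtain the pointwise bound
\[
\left|\partial_x^{m_1}\partial_t^{m_2}[\psi_2(t)u_{r,0}^{p,\ell}(t,x)]\right|\lesssim_{m_1,m_2}\int_0^1\left|e^{ia_pk\vartheta(x)}\right|k^{2j-\ell}|\widehat\varphi_\ell(-k^{2j+1})|\,dk .
\]

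The key point is the spatial decay of $|e^{ia_pk\vartheta(x)}|=e^{-(\im a_p)k\vartheta(x)}$. Since $\vartheta(x)=|x|$ outside $[-1,0]$ and $\vartheta$ is bounded on $[-1,0]$, we have $e^{-(\im a_p)k\vartheta(x)}\lesssim e^{-(\im a_p)k|x|}$ uniformly in $k\in[0,1]$. Hence the $L^2(\R)$ norm in $x$ is controlled by two copies of the half-line operator $q\mapsto\int_0^\infty e^{-ckx}q(k)\,dk$ with $c=\im a_p>0$. By the $L^2$ bound established in the proof of Lemma \ref{lem:ray-integral} (applied to the nonnegative function $q(k)=\mathbf 1_{[0,1]}(k)k^{2j-\ell}|\widehat\varphi_\ell(-k^{2j+1})|$), this gives
\[
\lesssim\left(\int_0^1k^{2(2j-\ell)}|\widehat\varphi_\ell(-k^{2j+1})|^2\,dk\right)^{1/2}.
\]
Note that the degenerate growth of the exponential as $k\to0$ is harmless precisely because of this Hilbert-type (Carleman) estimate, not a pointwise one.

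Finally, we change variables $\tau=-k^{2j+1}$, so that $dk=\frac{1}{2j+1}|\tau|^{-2j/(2j+1)}d\tau$. The integral becomes
\[
\frac{1}{2j+1}\int_{-1}^0|\tau|^{\frac{2(2j-\ell)-2j}{2j+1}}|\widehat\varphi_\ell(\tau)|^2\,d\tau .
\]
The exponent equals $\frac{2(j-\ell)}{2j+1}\ge0$ because $\ell\le j$, so the weight is at most $1$ on $[-1,0]$. This yields \eqref{eq:low-derivative}, with a bound uniform in $|t|\le4$.

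The main obstacle I expect is the justification that the Faà di Bruno prefactors together with the modified phase $\vartheta$ preserve the uniform exponential bound on the region $x\in[-1,0]$. There $\vartheta$ is merely bounded rather than comparable to $|x|$, so that piece must be handled separately: it contributes only a bounded-interval $L^2$ norm, controlled by Cauchy--Schwarz in $k$ over $[0,1]$. The case $\gamma=0$ is covered since \eqref{eq:reduced-low-symbol} is used only after the removable singularity has been extended continuously.
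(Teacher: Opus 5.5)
Your proof is correct and uses the same ingredients as the paper: Leibniz and Fa\`a di Bruno prefactors that stay bounded for $k\in[0,1]$, the $L^2$ Laplace-transform bound from the proof of Lemma \ref{lem:ray-integral} to absorb the loss of decay as $k\to0$, and the change of variables $\tau=-k^{2j+1}$. The only difference is organizational: the paper splits $x$ into $[-1,1]$ (a sup bound plus Cauchy--Schwarz in $k$) and $|x|>1$ (where $\vartheta(x)=\pm x$), while you use the single global majorant $e^{-(\im a_p)k\vartheta(x)}\lesssim e^{-(\im a_p)k|x|}$, which already covers $[-1,0]$ and makes the separate treatment in your last paragraph unnecessary.
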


\begin{proof}
We split the $x$ integration into $[-1,1]$, $(1,\infty)$, and $(-\infty,-1)$. 

By Leibniz' rule,
\[\partial_x^{m_1}\partial_t^{m_2}\left[\psi_2(t)e^{-ik^{2j+1}t}e^{ia_pk\vartheta(x)}\right]
=e^{-ik^{2j+1}t}\sum_{\mu=0}^{m_2}\binom{m_2}{\mu}(-ik^{2j+1})^{m_2-\mu}\partial_t^\mu\psi_2(t)\partial_x^{m_1}\left(e^{ia_pk\vartheta(x)}\right).\]
Since all derivatives of $\vartheta$ are bounded on $[-1,1]$, we have
\[\left|(-ik^{2j+1})^{m_2-\mu}\partial_x^{m_1}\left(e^{ia_pk\vartheta(x)}\right)\right|\lesssim_{\vartheta,m_1,m_2}1,\qquad |x|\le1,\ 0\le k\le1.\]
Moreover, since $\psi\in C_0^\infty(-1,1)$, we have
\[|\partial_t^\mu\psi_2(t)|\lesssim_{\psi,m_2}1,\qquad |t|\le4.\]
Thus,
\[\sup_{\substack{|x|\le1\\ |t|\le4}}\left|\partial_x^{m_1}\partial_t^{m_2}\left[\psi_2(t)e^{-ik^{2j+1}t}e^{ia_pk\vartheta(x)}\right]\right|\lesssim_{\psi,\vartheta,m_1,m_2}1.\]
Hence, by the Cauchy Schwarz inequality and \eqref{eq:reduced-low-symbol},
\[\sup_{|t|\le4}\left\|\partial_x^{m_1}\partial_t^{m_2}[\psi_2u_{r,0}^{p,\ell}](t)\right\|_{L^2(-1,1)}^2\lesssim_{\psi,\vartheta,m_1,m_2}\int_0^1k^{2(2j-\ell)}|\widehat\varphi_\ell(-k^{2j+1})|^2\,dk.\]

For $x>1$, we have $\vartheta(x)=x$ and
\[e^{ia_pkx}=e^{i(\re a_p)kx}e^{-(\im a_p)kx}.\]
Since $\im a_p>0$, Lemma \ref{lem:ray-integral} with $\sigma=0$, applied to the corresponding $k$ dependent amplitude supported in $[0,1]$, gives
\[\sup_{|t|\le4}\left\|\partial_x^{m_1}\partial_t^{m_2}[\psi_2u_{r,0}^{p,\ell}](t)\right\|_{L^2(1,\infty)}^2\lesssim_{\psi,m_1,m_2}\int_0^1k^{2(2j-\ell)}|\widehat\varphi_\ell(-k^{2j+1})|^2\,dk.\]

For $x<-1$, we have $\vartheta(x)=-x$. Setting $y=-x$, we have
\[e^{ia_pk\vartheta(x)}=e^{ia_pky},\qquad y>1,\]
and the $x$ derivatives differ from the corresponding $y$ derivatives only by a sign. Hence the same argument as for $x>1$ gives
\[\sup_{|t|\le4}\left\|\partial_x^{m_1}\partial_t^{m_2}[\psi_2u_{r,0}^{p,\ell}](t)\right\|_{L^2(-\infty,-1)}^2\lesssim_{\psi,m_1,m_2}\int_0^1k^{2(2j-\ell)}|\widehat\varphi_\ell(-k^{2j+1})|^2\,dk.\]
Finally, the change of variables $\tau=-k^{2j+1}$ gives
\[\begin{aligned}\int_0^1k^{2(2j-\ell)}|\widehat\varphi_\ell(-k^{2j+1})|^2\,dk=&~{}\frac{1}{2j+1}\int_{-1}^0|\tau|^{\frac{2(j-\ell)}{2j+1}}|\widehat\varphi_\ell(\tau)|^2\,d\tau\\
\lesssim&~{}\int_{-1}^0|\widehat\varphi_\ell(\tau)|^2\,d\tau,\end{aligned}\]
because $1\le\ell\le j$. Combining the estimates on the three spatial regions proves \eqref{eq:low-derivative}.
\end{proof}

The derivative estimate controls both the $X^{s,b,\alpha}$ norm and the spatial Sobolev norm of the low frequency contribution.

\begin{lemma}\label{lem:low-estimates}
For every $s,b,\alpha\in\R$,
\begin{equation}\label{eq:low-X}
\|u_{r,0}^{p,\ell}\|_{X_{\Omega_2}^{s,b,\alpha}}\lesssim_{s,b,\alpha}\|\varphi_\ell\|_{H^{r_\ell}(\R)}.
\end{equation}
If $s\ge-j$, then
\begin{equation}\label{eq:low-sobolev}
\sup_{t\in[0,2]}\|u_{r,0}^{p,\ell}(t)\|_{H^s(\R^+)}\lesssim_s\|\varphi_\ell\|_{H^{r_\ell}(\R)}.
\end{equation}
\end{lemma}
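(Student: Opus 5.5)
The natural route is to bound the restriction norm by the whole-space norm of the smooth extension $\psi_2(t)u_{r,0}^{p,\ell}(t,x)$, with $x\in\R$ and $\vartheta$ as in \eqref{eq:reduced-low-extension}. This function agrees with $u_{r,0}^{p,\ell}$ on $\Omega_2$, since $\psi_2=1$ for $|t|\le2$. Hence
\[\|u_{r,0}^{p,\ell}\|_{X_{\Omega_2}^{s,b,\alpha}}\le\|\psi_2u_{r,0}^{p,\ell}\|_{X^{s,b,\alpha}(\R^2)}.\]
Everything then reduces to Lemma \ref{lem:low-derivative}, because the extension is smooth, compactly supported in $t$ and Schwartz-like in $x$, and all its mixed derivatives are controlled in $L^2$ by $\|\widehat\varphi_\ell\|_{L^2(-1,0)}$.

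\textbf{Step 1 ($X^{s,b}$ part).} Since the weight is monotone in $s$ and $b$, I may assume $s\ge0$ and $b\ge0$ are nonnegative integers, say $s\le m_1$ and $b\le m_2$. I use
\[\bra{\xi}^{s}\bra{\tau-\xi^{2j+1}}^{b}\lesssim\bra{\xi}^{m_1}\bigl(\bra{\tau}^{m_2}+\bra{\xi}^{(2j+1)m_2}\bigr).\]
Plancherel then bounds the $X^{s,b}$ norm by a finite sum of $\|\partial_x^{\mu}\partial_t^{\nu}[\psi_2u_{r,0}^{p,\ell}]\|_{L^2(\R^2)}$. Since $\psi_2$ is supported in $|t|\le4$, each such term is at most a constant times the supremum over $|t|\le4$ of the spatial $L^2$ norm, which is exactly what Lemma \ref{lem:low-derivative} controls.

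\textbf{Step 2 ($\mathcal D^\alpha$ part).} Here I use $\bra{\tau}^{\alpha}\lesssim\bra{\tau}^{m_2}$ and discard the restriction $|\xi|\le1$. This reduces to the same $L^2$ bounds on $\partial_t^{m_2}[\psi_2u_{r,0}^{p,\ell}]$.

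\textbf{Step 3 (comparison with the data norm).} It remains to note that
\[\int_{-1}^0|\widehat\varphi_\ell|^2\,d\tau\lesssim\|\varphi_\ell\|_{H^{r_\ell}(\R)}^2\]
for any real $r_\ell$, since $\bra{\tau}\sim1$ on $[-1,0]$. This proves \eqref{eq:low-X}.

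\textbf{Step 4 (Sobolev bound \eqref{eq:low-sobolev}).} For $t\in[0,2]$ we have $\psi_2(t)=1$. The $H^s(\R^+)$ norm is bounded by the $H^{m}(\R)$ norm of the extension for any integer $m\ge\max(s,0)$, and Lemma \ref{lem:low-derivative} with $m_2=0$ and $m_1\le m$ gives the bound. The hypothesis $s\ge-j$ is not really needed here, only inherited.

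The only mildly delicate point is the symbol bound in Step 1 converting $\bra{\tau-\xi^{2j+1}}^b$ into pure derivatives. I would dispatch it with the elementary inequality $\bra{\tau-\xi^{2j+1}}\lesssim\bra{\tau}\bra{\xi}^{2j+1}$, raised to an integer power $m_2\ge b$. I do not expect any genuine obstacle.
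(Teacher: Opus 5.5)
Your proposal is correct and follows the paper's argument essentially step for step. The paper bounds the restriction norm by $\|\psi_2u_{r,0}^{p,\ell}\|_{X^{s,b,\alpha}(\R^2)}$, dominates the full weight (including the $\mathcal D^\alpha$ part) by $\bra{\xi}^{2M_1}\bra{\tau}^{2M_2}$, applies Plancherel and Lemma \ref{lem:low-derivative}, and compares $\int_{-1}^0|\widehat\varphi_\ell|^2$ with the $H^{r_\ell}$ norm, exactly as you do. Your single-case treatment of \eqref{eq:low-sobolev} via $H^m(\R)\hookrightarrow H^s(\R)$ is a harmless simplification of the paper's split into $s<0$ and $s\ge0$, and you are right that the hypothesis $s\ge-j$ is never actually used.
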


\begin{proof}
Choose integers $M_1,M_2\ge0$ sufficiently large that
\[\bra{\xi}^{2s}\bra{\tau-\xi^{2j+1}}^{2b}+\mathbf{1}_{|\xi|\le 1}(\xi)\bra{\tau}^{2\alpha}\lesssim_{s,b,\alpha}\bra{\xi}^{2M_1}\bra{\tau}^{2M_2}.\]
By Plancherel's theorem,
\[\|\psi_2u_{r,0}^{p,\ell}\|_{X^{s,b,\alpha}(\R^2)}^2\lesssim_{s,b,\alpha}\sum_{\nu=0}^{M_1}\sum_{\mu=0}^{M_2}\|\partial_x^\nu\partial_t^\mu(\psi_2u_{r,0}^{p,\ell})\|_{L^2(\R^2)}^2.\]
Since $\supp\psi_2\subset(-4,4)$, Lemma \ref{lem:low-derivative} gives
\[\begin{aligned}
\|\partial_x^\nu\partial_t^\mu(\psi_2u_{r,0}^{p,\ell})\|_{L^2(\R^2)}^2
=&~{}\int_{-4}^4\|\partial_x^\nu\partial_t^\mu[\psi_2(t)u_{r,0}^{p,\ell}(t)]\|_{L^2(\R)}^2\,dt\\
\lesssim&~{}8\sup_{|t|\le4}\|\partial_x^\nu\partial_t^\mu[\psi_2(t)u_{r,0}^{p,\ell}(t)]\|_{L^2(\R)}^2\\
\lesssim_{\nu,\mu}&~{}\int_{-1}^0|\widehat\varphi_\ell(\tau)|^2\,d\tau.
\end{aligned}\]
Hence
\[\|\psi_2u_{r,0}^{p,\ell}\|_{X^{s,b,\alpha}(\R^2)}^2\lesssim_{s,b,\alpha}\int_{-1}^0|\widehat\varphi_\ell(\tau)|^2\,d\tau\lesssim_s\|\varphi_\ell\|_{H^{r_\ell}(\R)}^2.\]
Since $\psi_2=1$ on $[0,2]$, the definition of the restriction norm proves \eqref{eq:low-X}.

It remains to prove \eqref{eq:low-sobolev}. If $-j\le s<0$, then
\[\|u_{r,0}^{p,\ell}(t)\|_{H^s(\R^+)}\le\|u_{r,0}^{p,\ell}(t)\|_{H^s(\R)}\lesssim_s\|u_{r,0}^{p,\ell}(t)\|_{L^2(\R)},\]
and Lemma \ref{lem:low-derivative} with $m_1=m_2=0$ gives the desired estimate. If $s\ge0$, choose an integer $M\ge s$. Then
\[\|u_{r,0}^{p,\ell}(t)\|_{H^s(\R^+)}\lesssim_s\sum_{\nu=0}^M\|\partial_x^\nu u_{r,0}^{p,\ell}(t)\|_{L^2(\R)},\]
and Lemma \ref{lem:low-derivative} with $m_2=0$ again gives \eqref{eq:low-sobolev}.
\end{proof}

The same estimates hold for the low frequency left contribution, with the temporal frequencies lying in $[0,1]$ instead of $[-1,0]$.

\subsection{Estimate of the residue term}

It remains to control the residue contribution that occurs only when $j$ is odd and $\gamma>0$. Since $\mathcal R_{j,\gamma}[0,\bphi,0]$ is a finite linear combination of terms of the form
\[e^{-\gamma x+\gamma^{2j+1}t}\widetilde\varphi_\ell(2,-i\gamma^{2j+1}),\]
it suffices to estimate the coefficients $\widetilde\varphi_\ell(2,-i\gamma^{2j+1})$ in terms of the boundary data.

\begin{lemma}\label{lem:reduced-residue}
Assume that $j$ is odd and $\gamma>0$. If
\[s>-j-\frac12,\qquad b,\alpha\in\R,\]
then
\begin{equation}\label{eq:residue-X}
\|\mathcal R_{j,\gamma}[0,\bphi,0]\|_{X_{(0,2)\times\R^+}^{s,b,\alpha}}\lesssim_{s,b,\alpha,\gamma}\sum_{\ell=1}^j\|\varphi_\ell\|_{H^{r_\ell}(\R)}.
\end{equation}
If $s\ge-j$, then
\begin{equation}\label{eq:residue-sobolev}
\sup_{t\in[0,2]}\|\mathcal R_{j,\gamma}[0,\bphi,0](t)\|_{H^s(\R^+)}\lesssim_{s,\gamma}\sum_{\ell=1}^j\|\varphi_\ell\|_{H^{r_\ell}(\R)}.
\end{equation}
\end{lemma}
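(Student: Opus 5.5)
The residue term with $u_0=0$, $f=0$ is
\[\mathcal R_{j,\gamma}[0,\bphi,0](t,x)=-ie^{-\gamma x+\gamma^{2j+1}t}\sum_{\ell=1}^jc_{p_*,\ell}(i\gamma)^{2j+1-\ell}\widetilde\varphi_\ell(2,-i\gamma^{2j+1}).\]
This is a finite sum of a fixed function $e^{-\gamma x+\gamma^{2j+1}t}$ times scalar coefficients. The plan is to bound these coefficients by the boundary norms, and separately to bound a suitable extension of the fixed function in the required norms.

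\emph{Coefficient bound.} For smooth $\varphi_\ell$ supported in $(0,2)$ we have
\[\widetilde\varphi_\ell(2,-i\gamma^{2j+1})=\int_0^2e^{-\gamma^{2j+1}\tau}\varphi_\ell(\tau)\,d\tau=\langle\varphi_\ell,\chi\rangle,\]
where $\chi\in C_0^\infty(\R)$ is any function equal to $e^{-\gamma^{2j+1}\tau}$ on $[0,2]$, say $\chi(\tau)=\psi_2(\tau)e^{-\gamma^{2j+1}\tau}$. Since $\chi$ is a Schwartz function, duality gives
\[|\langle\varphi_\ell,\chi\rangle|\le\|\varphi_\ell\|_{H^{r_\ell}(\R)}\|\chi\|_{H^{-r_\ell}(\R)}\lesssim_{\gamma,s}\|\varphi_\ell\|_{H^{r_\ell}(\R)},\]
and this holds for any real $r_\ell$. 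The bound also lets the coefficient functional extend continuously from smooth data to $H^{r_\ell}$, consistent with the unique extension in Proposition \ref{prop:reduced-ibvp}. The hypothesis $s>-j-\frac12$ only ensures $r_\ell>-\frac12$, which places us in the range of Lemma \ref{lem:boundary-extension}, so identifying $\varphi_\ell$ with its zero extension is harmless.

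\emph{Fixed function bound.} For \eqref{eq:residue-X}, take the extension
\[E(t,x):=\psi_2(t)\,e^{\gamma^{2j+1}t}\,\rho_0(x)e^{-\gamma\vartheta(x)},\]
with $\vartheta$ as in \eqref{eq:vartheta} and, for instance, $\rho_0\equiv1$. Since $\gamma>0$ and $\vartheta(x)=|x|$ for $|x|\ge1$, $E$ is smooth, exponentially decaying in $x$, and compactly supported in $t$. Hence $E$ lies in $\mathcal S$-type spaces, and every weight $\langle\xi\rangle^{2s}\langle\tau-\xi^{2j+1}\rangle^{2b}+\mathbf 1_{|\xi|\le1}\langle\tau\rangle^{2\alpha}$ is dominated by $\langle\xi\rangle^{2M_1}\langle\tau\rangle^{2M_2}$. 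As in Lemma \ref{lem:low-estimates}, we get
\[\|E\|_{X^{s,b,\alpha}}\lesssim\sum_{\nu\le M_1,\mu\le M_2}\|\partial_x^\nu\partial_t^\mu E\|_{L^2(\R^2)}<\infty.\]
Since $E$ agrees with the exponential on $\Omega_2$, the restriction norm is bounded. For \eqref{eq:residue-sobolev}, we have for $s\ge0$ and integer $M\ge s$
\[\sup_{t\in[0,2]}\|e^{-\gamma x+\gamma^{2j+1}t}\|_{H^s(\R^+)}\le e^{2\gamma^{2j+1}}\|e^{-\gamma x}\|_{H^M(\R^+)}\lesssim_{s,\gamma}1,\]
and for negative $s$ we use $H^0\hookrightarrow H^s$.

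There is no real obstacle here. The only point needing care is to state explicitly that the coefficient bound is valid for every real Sobolev index, so that it holds over the full range $-\frac12<r_\ell<\frac12$ (indeed for all $s$). Combining the two bounds over $\ell$, and absorbing $|c_{p_*,\ell}|\gamma^{2j+1-\ell}$ into the constant, gives both estimates.
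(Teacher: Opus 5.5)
Your argument is correct and follows the paper's proof: the same extension $\psi_2(t)e^{-\gamma\vartheta(x)+\gamma^{2j+1}t}$ handles the fixed profile, and the coefficients $\widetilde\varphi_\ell(2,-i\gamma^{2j+1})$ are bounded by $H^{r_\ell}$--$H^{-r_\ell}$ duality. The only difference is the test function: the paper pairs with the sharp cutoff $e^{-\gamma^{2j+1}t}\mathbf{1}_{[0,2]}(t)$, which lies in $H^\sigma(\R)$ only for $\sigma<\frac12$, and this is where it uses $s>-j-\frac12$; your smooth cutoff $\psi_2(\tau)e^{-\gamma^{2j+1}\tau}$ is legitimate because the data are supported in $(0,2)$, and it removes any restriction on $r_\ell$ in that step.
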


\begin{proof}
We construct a whole line extension of the residue term. Let $\vartheta$ be the function defined in \eqref{eq:vartheta} and define
\[\widetilde{\mathcal R}_{j,\gamma}(t,x):=-i\psi_2(t)e^{-\gamma\vartheta(x)+\gamma^{2j+1}t}\sum_{\ell=1}^jc_{p_\ast,\ell}(i\gamma)^{2j+1-\ell}\widetilde\varphi_\ell(2,-i\gamma^{2j+1}).\]
Since $\vartheta(x)=x$ for $x\ge0$ and $\psi_2(t)=1$ for $0\le t\le2$, we have
\[\widetilde{\mathcal R}_{j,\gamma}(t,x)=\mathcal R_{j,\gamma}[0,\bphi,0](t,x),\qquad (t,x)\in\Omega_2.\]
Set
\[\Phi_\gamma(t,x):=\psi_2(t)e^{-\gamma\vartheta(x)+\gamma^{2j+1}t}.\]
Since $\gamma>0$, every spatial derivative of $e^{-\gamma\vartheta(x)}$ decays exponentially as $|x|\to\infty$, while every temporal derivative of $\psi_2(t)e^{\gamma^{2j+1}t}$ is smooth and compactly supported. Consequently,
\[\|\Phi_\gamma\|_{X^{s,b,\alpha}(\R^2)}\lesssim_{s,b,\alpha,\gamma}1.\]
It follows that
\[\|\widetilde{\mathcal R}_{j,\gamma}\|_{X^{s,b,\alpha}(\R^2)}\lesssim_{s,b,\alpha,\gamma}\sum_{\ell=1}^j|\widetilde\varphi_\ell(2,-i\gamma^{2j+1})|.\]
Thus, it suffices to show that
\[|\widetilde\varphi_\ell(2,-i\gamma^{2j+1})| \lesssim_{\gamma, r_{\ell}} \|\varphi_\ell\|_{H^{r_\ell}(\R)}.\]
Since $s>-j-\frac12$, we have
\[r_\ell>-\frac12,\qquad 1\le\ell\le j.\]
For $1\le\ell\le j$, define
\[g_\gamma(t):=e^{-\gamma^{2j+1}t}\mathbf{1}_{[0,2]}(t).\]
A direct computation gives
\[\widehat g_\gamma(\tau)=\frac{1-e^{-2(\gamma^{2j+1}+i\tau)}}{\gamma^{2j+1}+i\tau},\]
and hence
\[|\widehat g_\gamma(\tau)|\lesssim_\gamma\langle\tau\rangle^{-1}.\]
Therefore $g_\gamma\in H^\sigma(\R)$ for every $\sigma<\frac12$. Since $-r_\ell<\frac12$, it follows that $g_\gamma\in H^{-r_\ell}(\R)$. Recalling that $\varphi_\ell$ denotes its zero extension to $\R$, Sobolev duality gives
\begin{equation}\label{eq:residue-coefficient}
|\widetilde\varphi_\ell(2,-i\gamma^{2j+1})| =\left|\int_\R\varphi_\ell(t)g_\gamma(t)\,dt\right|\lesssim_{\gamma,r_\ell}\|\varphi_\ell\|_{H^{r_\ell}(\R)},
\end{equation}
which proves \eqref{eq:residue-X}.

For the spatial Sobolev estimate, since $e^{-\gamma x}\in H^s(\R^+)$ for every $s\in\R$ and $e^{\gamma^{2j+1}t}$ is uniformly bounded for $t\in[0,2]$, we have
\[\sup_{t\in[0,2]}\|\mathcal R_{j,\gamma}[0,\bphi,0](t)\|_{H^s(\R^+)}\lesssim_{s,\gamma}\sum_{\ell=1}^j|\widetilde\varphi_\ell(2,-i\gamma^{2j+1})|.\]
Together with \eqref{eq:residue-coefficient}, we complete the proof of \eqref{eq:residue-sobolev}.
\end{proof}

\subsection{Proof of Proposition \ref{prop:reduced-ibvp}}
For the smooth boundary data considered above, Lemmas \ref{lem:high-X}, \ref{lem:high-D}, and \ref{lem:low-estimates} give, for every $1\le p,\ell\le j$,
\[\|u_{p,\ell}\|_{X_{\Omega_2}^{s,b,\alpha}}\lesssim_{s,b,\alpha}\|\varphi_\ell\|_{H^{r_\ell}(\R)}.\]
If $s\ge-j$, Lemmas \ref{lem:high-sobolev} and \ref{lem:low-estimates} also give
\[\sup_{t\in[0,2]}\|u_{p,\ell}(t)\|_{H^s(\R^+)}\lesssim_s\|\varphi_\ell\|_{H^{r_\ell}(\R)}.\]
Here the left and right contour contributions satisfy the same estimates. Since each $\varphi_\ell$ is identified with its zero extension to $\R$, Lemma \ref{lem:boundary-extension} yields
\[\|\varphi_\ell\|_{H^{r_\ell}(\R)}\lesssim\|\varphi_\ell\|_{H^{r_\ell}(0,2)}.\]
Therefore, summing over $1\le p,\ell\le j$ and using Lemma \ref{lem:reduced-residue} when $j$ is odd and $\gamma>0$, we obtain
\[\|S_j[0,\bphi,0]\|_{X_{\Omega_2}^{s,b,\alpha}}\lesssim_{s,b,\alpha}\left(\sum_{\ell=1}^j\|\varphi_\ell\|_{H^{r_\ell}(0,2)}^2\right)^{1/2}=\|\bphi\|_{\mathcal H_2^s}.\]
If $s\ge-j$, we also obtain
\[\sup_{t\in[0,2]}\|S_j[0,\bphi,0](t)\|_{H^s(\R^+)}\lesssim_s\left(\sum_{\ell=1}^j\|\varphi_\ell\|_{H^{r_\ell}(0,2)}^2\right)^{1/2}=\|\bphi\|_{\mathcal H_2^s}.\]
This proves \eqref{eq:reduced-X} and \eqref{eq:reduced-sobolev} for smooth boundary data.

We now extend the reduced solution operator to general $\bphi\in\mathcal H_2^s$. Since
\[-\frac12<r_\ell<\frac12,\qquad 1\le\ell\le j,\]
we may choose $\varphi_\ell^{(n)}\in C_0^\infty((0,2))$ such that
\[\varphi_\ell^{(n)}\longrightarrow\varphi_\ell\quad\text{in }H^{r_\ell}(0,2).\]
Set
\[\bphi^{(n)}:=(\varphi_1^{(n)},\ldots,\varphi_j^{(n)}).\]
Applying the estimate for smooth data to $\bphi^{(n)}-\bphi^{(m)}$ gives
\[\|S_j[0,\bphi^{(n)},0]-S_j[0,\bphi^{(m)},0]\|_{X_{\Omega_2}^{s,b,\alpha}}\lesssim_{s,b,\alpha}\|\bphi^{(n)}-\bphi^{(m)}\|_{\mathcal H_2^s}.\]
Hence $\{S_j[0,\bphi^{(n)},0]\}_{n\ge1}$ is Cauchy in $X_{\Omega_2}^{s,b,\alpha}$. We define
\[S_j[0,\bphi,0]:=\lim_{n\to\infty}S_j[0,\bphi^{(n)},0]\quad\text{in }X_{\Omega_2}^{s,b,\alpha}.\]
The difference estimate shows that the limit is independent of the approximating sequence and satisfies \eqref{eq:reduced-X}.

If $s\ge-j$, the smooth Sobolev estimate similarly gives
\[\sup_{t\in[0,2]}\|S_j[0,\bphi^{(n)},0](t)-S_j[0,\bphi^{(m)},0](t)\|_{H^s(\R^+)}\lesssim_s\|\bphi^{(n)}-\bphi^{(m)}\|_{\mathcal H_2^s}.\]
Hence the convergence also holds in $C([0,2];H^s(\R^+))$, and \eqref{eq:reduced-sobolev} follows. This completes the proof.

\section{Forced linear IBVP estimates}\label{sec:forced-linear}

We estimate the solution of \eqref{eq:linear-robin} on $(0,T)\times\R^+$ with $0<T\le1$. The proof separates the whole line contributions generated by the initial datum and forcing from reduced boundary corrections that restore the given Robin data.

\begin{proposition}\label{prop:linear-estimates}
Fix $0<T\le1$. Assume that
\[-j-\frac12<s<\frac32,\qquad 0<b<\frac12,\qquad \frac12<\alpha<1,\qquad \alpha\le1+\frac{s}{2j+1}.\]
For $u_0\in H^s(\R^+)$, $\bphi\in\mathcal H_T^s$, and $f\in\mathcal Z_{\Omega_T}^{s,-b,\alpha-1}$, the smooth UTM solution operator extends uniquely to
\[S_j[u_0,\bphi,f]\in X_{\Omega_T}^{s,b,\alpha}\]
and satisfies
\begin{equation}\label{eq:linear-X}
\|S_j[u_0,\bphi,f]\|_{X_{\Omega_T}^{s,b,\alpha}}\lesssim_{s,b,\alpha}\|u_0\|_{H^s(\R^+)}+\|\bphi\|_{\mathcal H_T^s}+\|f\|_{\mathcal Z_{\Omega_T}^{s,-b,\alpha-1}}.
\end{equation}
If $s\ge-j$, then
\[S_j[u_0,\bphi,f]\in C([0,T];H^s(\R^+))\]
and
\begin{equation}\label{eq:linear-sobolev}
\sup_{t\in[0,T]}\|S_j[u_0,\bphi,f](t)\|_{H^s(\R^+)}\lesssim_{s,b,\alpha}\|u_0\|_{H^s(\R^+)}+\|\bphi\|_{\mathcal H_T^s}+\|f\|_{\mathcal Z_{\Omega_T}^{s,-b,\alpha-1}}.
\end{equation}
The implicit constants may depend on the fixed parameters $j$ and $\gamma$, but they are independent of $T\in(0,1]$.
\end{proposition}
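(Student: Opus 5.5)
The plan is the standard decomposition from the Dirichlet theory of Himonas and Yan \cite{Himonas2022}: write the forced UTM solution as a whole line solution plus a reduced boundary correction. Concretely:
\begin{enumerate}
\item Choose extensions $U_0\in H^s(\R)$ of $u_0$ with $\|U_0\|_{H^s(\R)}\le 2\|u_0\|_{H^s(\R^+)}$, and $\mathfrak f\in\mathcal Z^{s,-b,\alpha-1}(\R^2)$ of $f$ with norm at most $2\|f\|_{\mathcal Z_{\Omega_T}^{s,-b,\alpha-1}}$.
\item Let $W$ be the whole line solution
\[W(t)=\psi_1(t)e^{-t(-1)^{j+1}\partial_x^{2j+1}}U_0+\psi_1(t)\int_0^te^{-(t-t')(-1)^{j+1}\partial_x^{2j+1}}\mathfrak f(t')\,dt'.\]
\item Set
\[S_j[u_0,\bphi,f]=W|_{\Omega_T}+S_j\bigl[0,\bphi-\mathbf{W},0\bigr]|_{\Omega_T},\]
where $\mathbf W=(\mathbf W_1,\ldots,\mathbf W_j)$ and $\mathbf W_\ell(t):=(\partial_x+\gamma)\partial_x^{\ell-1}W(t,0)$.
\item For smooth data, identify this with the right hand side of \eqref{eq:utm-full-formula} through the uniqueness in Corollary \ref{cor:utm-equivalence}. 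Both sides solve \eqref{eq:linear-robin} with the same data, and the reduced problem on $(0,2)$ restricts to $(0,T)$.
\end{enumerate}

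Three groups of estimates are needed for $W$. First, the whole line bounds
\[\|W\|_{X^{s,b,\alpha}}+\sup_t\|W(t)\|_{H^s}\lesssim\|U_0\|_{H^s}+\|\mathfrak f\|_{X^{s,-b,\alpha-1}}.\]
The free part is standard, and on $|\xi|\le1$ the $\mathcal D^\alpha$ piece follows because $\langle\tau\rangle\sim\langle\tau-\xi^{2j+1}\rangle$. The Duhamel part uses the usual Kenig–Ponce–Vega splitting of $(e^{it\tau'}-1)/(i\tau')$, which is valid since $b<\tfrac12<1-b$; on low frequencies $\alpha-1$ replaces $-b$, and $\alpha<1$ is used there. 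Second, the temporal trace estimates, for $0\le m\le j$,
\[\|\partial_x^mW(\cdot,0)\|_{H^{r_m}(\R)}\lesssim\|U_0\|_{H^s}+\|\mathfrak f\|_{\mathcal Z^{s,-b,\alpha-1}}.\]
For the free part this is the Kato smoothing identity: change variables $\tau=\xi^{2j+1}$, and $|\xi|^{2m}|\xi|^{-2j}$ combines with $\langle\tau\rangle^{2r_m}$ to give $\langle\xi\rangle^{2s}$. For the Duhamel part this is exactly where the $Y^{s,-b}$ norm enters, as in Proposition \ref{prop:inhomogeneous-whole-line}, i.e.\ \cite[Theorem 3.3]{Himonas2022}. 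Since $r_\ell\le r_{\ell-1}$, it follows that $\mathbf W_\ell\in H^{r_\ell}(\R)$. Third, restrict to $(0,T)$ and use $\|\cdot\|_{H^{r_\ell}(0,2)}\le\|\cdot\|_{H^{r_\ell}(\R)}$ together with Lemma \ref{lem:boundary-extension} to pass between the interval and whole line norms with $T$-independent constants. Proposition \ref{prop:reduced-ibvp} then bounds the correction in $X_{\Omega_2}^{s,b,\alpha}$, and in $C_tH^s$ when $s\ge-j$. Restriction from $\Omega_2$ to $\Omega_T$ is norm decreasing, which gives \eqref{eq:linear-X} and \eqref{eq:linear-sobolev} with constants independent of $T$.

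Uniqueness of the extension and continuity in time follow by density of smooth compactly supported data in each space and the linear bounds, as in the proof of Proposition \ref{prop:reduced-ibvp}.

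The step I expect to be the main obstacle is the trace estimate for the Duhamel term at the regularity $r_m$, uniformly for $s$ down to $-j-\tfrac12$ and for $b<\tfrac12$. Near $\tau\approx\xi^{2j+1}$, the $X^{s,-b}$ norm alone loses, which is why the $Y^{s,-b}$ control is required. I would invoke the cited whole line theorem directly rather than reprove it.

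A secondary point is that the identification in step 4 requires the boundary datum $\bphi-\mathbf W$ to match on $(0,T)$ only, and the reduced operator on $(0,2)$ must be fed the zero extension. Lemma \ref{lem:boundary-extension} handles this because $|r_\ell|<\tfrac12$.
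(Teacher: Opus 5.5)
Your proposal is essentially the paper's superposition proof. The paper keeps the free flow $U$ and the Duhamel term $W$ separate, with two reduced corrections. Otherwise it uses the same ingredients: the whole line trace estimates (including the endpoint $\ell=j$ that the Robin hierarchy requires), Lemma \ref{lem:boundary-extension}, Proposition \ref{prop:reduced-ibvp}, and density. One correction: for the free part, the change of variables $\tau=\xi^{2j+1}$ gives the weight $\langle\xi^{2j+1}\rangle^{2r_m}|\xi|^{2(m-j)}$, which is comparable to $\langle\xi\rangle^{2s}$ only for $|\xi|\gtrsim1$ when $m<j$, and it is unbounded near $\xi=0$. So those low frequencies must be handled with the time cutoff $\psi_1$, as in \cite[Theorem 3.1]{Himonas2022}, which the paper cites; your cutoff-free identity holds globally only at the endpoint $m=j$.
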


Recall from \eqref{eq:intro-r-ell} that
\[r_\ell=\frac{s+j-\ell}{2j+1},\qquad 0\le\ell\le j.\]
Under the assumptions of Proposition \ref{prop:linear-estimates},
\[-\frac12<r_\ell<\frac12,\qquad 1\le\ell\le j.\]
Recall that $\psi_1=1$ on $[-1,1]$.

For the whole line analysis, we use the linear group $S(t)$ associated with the higher order dispersive equation on the whole line, defined by
\[\widehat{S(t)f}(\xi):=e^{it\xi^{2j+1}}\widehat f(\xi),\qquad t\in\R.\]
Thus $U(t)=S(t)f$ solves
\[\partial_tU+(-1)^{j+1}\partial_x^{2j+1}U=0,\qquad U(0)=f,\]
on $\R^2$.

\begin{proposition}\label{prop:homogeneous-whole-line}
Let $s,b,\alpha\in\R$ and
\[U(t):=S(t)U_0.\]
Then
\begin{equation}\label{eq:homogeneous-whole-line}
\|\psi_1U\|_{X^{s,b,\alpha}(\R^2)}+\sup_{t\in\R}\|U(t)\|_{H^s(\R)}\lesssim\|U_0\|_{H^s(\R)}.
\end{equation}
Moreover,
\begin{equation}\label{eq:homogeneous-traces}
\sup_{x\in\R}\|\psi_1(t)\partial_x^{\ell}U(\cdot,x)\|_{H^{r_\ell}(\R)}\lesssim_s\|U_0\|_{H^s(\R)},\qquad 0\le \ell\le j.
\end{equation}
\end{proposition}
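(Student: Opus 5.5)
The plan is to work entirely on the Fourier side and treat the three estimates separately. For \eqref{eq:homogeneous-whole-line}, the space-time Fourier transform of $\psi_1(t)U(t,x)$ is
\[\mathcal F(\psi_1U)(\tau,\xi)=\widehat{\psi_1}(\tau-\xi^{2j+1})\widehat{U_0}(\xi).\]
Hence the $X^{s,b}$ part of the norm reduces to $\|\bra{\cdot}^b\widehat{\psi_1}\|_{L^2}\|U_0\|_{H^s}$. This is finite for every $b$ because $\psi_1\in C_0^\infty$.

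For the $\mathcal D^\alpha$ part, we restrict to $|\xi|\le1$. There $\bra{\tau}\lesssim\bra{\tau-\xi^{2j+1}}$, which gives
\[\int_{-1}^1\int\bra{\tau}^{2\alpha}|\widehat{\psi_1}(\tau-\xi^{2j+1})|^2\,d\tau\,|\widehat{U_0}(\xi)|^2\,d\xi\lesssim\int_{-1}^1|\widehat{U_0}|^2\,d\xi\lesssim_s\|U_0\|_{H^s}^2,\]
since $\bra{\xi}^{-2s}\sim1$ on $|\xi|\le1$. The bound $\sup_t\|U(t)\|_{H^s(\R)}=\|U_0\|_{H^s}$ follows because $S(t)$ is unitary on every $H^s$.

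The trace estimate \eqref{eq:homogeneous-traces} is the main step. Fix $x$ and write
\[\partial_x^\ell U(t,x)=\frac1{2\pi}\int e^{it\xi^{2j+1}}(i\xi)^\ell e^{ix\xi}\widehat{U_0}(\xi)\,d\xi.\]
We split $\widehat{U_0}$ into $|\xi|\le1$ and $|\xi|>1$.

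For the high-frequency part, change variables $\mu=\xi^{2j+1}$, which is a bijection of $\R$. Then $d\xi=\frac{1}{2j+1}|\mu|^{-2j/(2j+1)}d\mu$, and the temporal Fourier transform of the trace is
\[\frac{2\pi}{2j+1}\,|\mu|^{-2j/(2j+1)}(i\xi)^\ell e^{ix\xi}\widehat{U_0}(\xi),\qquad \xi=\mu^{1/(2j+1)}.\]
Its $H^{r_\ell}$ norm squared, before the cutoff, is therefore comparable to
\[\int_{|\xi|>1}\bra{\xi}^{2(2j+1)r_\ell}|\xi|^{2\ell}|\xi|^{-4j}|\widehat{U_0}|^2|\xi|^{2j}\,d\xi.\]
The exponent is $2(s+j-\ell)+2\ell-2j=2s$, so this is bounded by $\|U_0\|_{H^s}^2$ uniformly in $x$, since $|e^{ix\xi}|=1$.

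The multiplication by $\psi_1$ is harmless. Multiplication by a Schwartz function is bounded on $H^{r}(\R)$ for every real $r$, so it costs only a constant.

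For the low-frequency part, $\psi_1(t)\int_{|\xi|\le1}e^{it\xi^{2j+1}}(i\xi)^\ell e^{ix\xi}\widehat{U_0}\,d\xi$ is a smooth, compactly supported function of $t$. Its $H^N$ norm is bounded for any $N$ by Cauchy--Schwarz, giving $\lesssim\|\widehat{U_0}\|_{L^2(|\xi|\le1)}\lesssim\|U_0\|_{H^s}$, uniformly in $x$.

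The only delicate point I expect is the Jacobian bookkeeping in the change of variables. One must check that the powers combine to exactly $\bra{\xi}^{2s}$, and that the use of $\bra{\mu}\sim|\xi|^{2j+1}$ on $|\xi|>1$ is legitimate, which it is. No constraint on $s$ beyond $r_\ell$ being defined is needed.
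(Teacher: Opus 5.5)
Your proof is correct. For the endpoint $\ell=j$ of the trace bound, which is the only part the paper proves itself, your high-frequency computation is the same as the paper's. After the substitution $\mu=\xi^{2j+1}$, the net Jacobian factor in the $H^{r_\ell}$ integral is $|\xi|^{-2j}$, and it cancels $|\xi|^{2\ell}$ exactly when $\ell=j$.

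The difference is elsewhere. The paper imports the $X^{s,b,\alpha}$ bound and the traces for $0\le\ell\le j-1$ from Theorem 3.1 of Himonas and Yan. You derive them directly from $\mathcal F(\psi_1U)(\tau,\xi)=\widehat{\psi_1}(\tau-\xi^{2j+1})\widehat{U_0}(\xi)$ and a low/high frequency split.

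That split is exactly what the cases $\ell<j$ need. Without it, the leftover weight $|\xi|^{2(\ell-j)}$ is not controlled by $\|U_0\|_{H^s(\R)}$ near $\xi=0$, which is why the time cutoff $\psi_1$ appears in the statement. For $\ell=j$ the split is unnecessary: the trace bound holds globally in $t$ before multiplying by $\psi_1$, and that is how the paper argues. So your route gives a self-contained, uniform treatment of all $0\le\ell\le j$ and all $s\in\R$, while the paper's is shorter because it relies on the citation.

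Two small corrections. In the $\mathcal D^\alpha$ estimate with $\alpha<0$ you need the reverse inequality $\langle\tau-\xi^{2j+1}\rangle\lesssim\langle\tau\rangle$. It also holds on $|\xi|\le1$, so state $\langle\tau\rangle\sim\langle\tau-\xi^{2j+1}\rangle$ there. With the paper's Fourier normalization, the prefactor in your temporal transform of the trace should be $\frac{1}{2j+1}$ rather than $\frac{2\pi}{2j+1}$, which does not affect the estimate.
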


\begin{proof}
The estimate \eqref{eq:homogeneous-whole-line} and \eqref{eq:homogeneous-traces} for $0\le \ell \le j-1$ follow from Theorem 3.1 in \cite{Himonas2022}. We prove the endpoint $\ell=j$.

Set $m:=2j+1$. The change of variables $\tau=\xi^m$ gives
\[\begin{aligned}\|\partial_x^jU(\cdot,x)\|_{H^{s/m}(\R)}^2\lesssim&~{}\int_\R\langle\xi^m\rangle^{2s/m}|\xi|^{2j-(m-1)}|\widehat U_0(\xi)|^2\,d\xi\\
=&~{}\int_\R\langle\xi^m\rangle^{2s/m}|\widehat U_0(\xi)|^2\,d\xi\\
\lesssim_s&~{}\|U_0\|_{H^s(\R)}^2,\end{aligned}\]
because $m-1=2j$ and
\[\langle\xi^m\rangle^{1/m}\sim\langle\xi\rangle.\]
The estimate is uniform in $x$. Multiplication by $\psi_1$ is bounded on $H^{s/m}(\R)$. Since $r_j=s/m$, this proves \eqref{eq:homogeneous-traces} for $\ell=j$.
\end{proof}

\begin{proposition}\label{prop:inhomogeneous-whole-line}
Assume that
\[s\in\R,\qquad 0<b<\frac12,\qquad \frac12<\alpha<1.\]
Let
\[W(t):=\int_0^tS(t-t')w(t')\,dt'.\]
Then
\begin{equation}\label{eq:inhomogeneous-whole-line}
\|\psi_1W\|_{X^{s,b,\alpha}(\R^2)}+\sup_{t\in[0,1]}\|W(t)\|_{H^s(\R)}\lesssim\|w\|_{\mathcal Z^{s,-b,\alpha-1}(\R^2)}.
\end{equation}
Moreover,
\begin{equation}\label{eq:inhomogeneous-traces}
\sup_{x\in\R}\|\psi_1(t)\partial_x^\ell W(\cdot,x)\|_{H^{r_\ell}(\R)}\lesssim\|w\|_{\mathcal Z^{s,-b,\alpha-1}(\R^2)},\qquad 0\le \ell \le j.
\end{equation}
\end{proposition}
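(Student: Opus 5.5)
The plan is to follow the proof of Theorem 3.3 in \cite{Himonas2022}, proving each part through an explicit Fourier representation of the Duhamel term $W$. With $\widetilde w:=\mathcal F(w)$, write
\[\psi_1(t)W(t,x)=\psi_1(t)\frac{1}{(2\pi)^2}\int_{\R^2}e^{ix\xi}\,\frac{e^{it\tau}-e^{it\xi^{2j+1}}}{i(\tau-\xi^{2j+1})}\,\widetilde w(\tau,\xi)\,d\xi\,d\tau,\]
and split the integrand with a smooth cutoff into the near region $|\tau-\xi^{2j+1}|\le1$ and the far region $|\tau-\xi^{2j+1}|>1$.

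On the near region, expand the difference quotient in a Taylor series in $t(\tau-\xi^{2j+1})$. This produces $\sum_n \frac{i^{n-1}t^n}{n!}\psi_1(t)S(t)h_n$, where each $\widehat{h_n}(\xi)$ is the integral over $|\lambda|\le1$ of $\lambda^{n-1}\widetilde w(\lambda+\xi^{2j+1},\xi)$. Since $b<\frac12$ on a unit interval, $\|h_n\|_{H^s}\lesssim\|w\|_{X^{s,-b}}$.

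On the far region, split the term into a free part $\psi_1(t)S(t)h_\infty$ and the remainder $\psi_1\mathcal F^{-1}[\widetilde w/(i(\tau-\xi^{2j+1}))]$. For the free part,
\[\widehat{h_\infty}(\xi)=\int_{|\tau-\xi^{2j+1}|>1}\frac{\widetilde w(\tau,\xi)}{\tau-\xi^{2j+1}}\,d\tau.\]
Cauchy--Schwarz bounds this only with a loss, because the weight $\langle\tau-\xi^{2j+1}\rangle^{-1+b}$ is not square integrable for $b<\frac12$. This is where the extra norms enter. For $|\xi|>1$, I would control $\widehat{h_\infty}$ through the $Y^{s,-b}$ component together with a dyadic decomposition in $\tau$ versus $\xi^{2j+1}$. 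For $|\xi|\le1$, the $\mathcal D^{\alpha-1}$ norm with $\alpha>\frac12$ gives integrability, since $\langle\tau\rangle^{-\alpha}$ is then square integrable. All free pieces are then handled by Proposition \ref{prop:homogeneous-whole-line}, which gives both the $X^{s,b,\alpha}$ and $C_tH^s$ bounds and the traces \eqref{eq:homogeneous-traces}.

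For the remainder, the $X^{s,b}$ bound is immediate because $\langle\tau-\xi^{2j+1}\rangle^{b-1}\le\langle\tau-\xi^{2j+1}\rangle^{-b}$. The $\mathcal D^\alpha$ part follows from $\langle\tau\rangle\sim\langle\tau-\xi^{2j+1}\rangle$ for $|\xi|\le1$, and the $\psi_1$ multiplication is harmless. Continuity in $t$ with values in $H^s$ follows by density.

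The main obstacle is the trace estimate \eqref{eq:inhomogeneous-traces} for the remainder. There one must bound $\|\int e^{ix\xi}\xi^\ell\widetilde w/(\tau-\xi^{2j+1})\,d\xi\|_{H^{r_\ell}_\tau}$ uniformly in $x$. I would split into $|\xi|^{2j+1}\ll|\tau|$, $|\xi|^{2j+1}\gg|\tau|$, and $|\xi|^{2j+1}\sim|\tau|$.
\begin{itemize}
\item In the first region, the $\xi$ integral is estimated by Cauchy--Schwarz with the weight $\langle\tau\rangle^{s/(2j+1)}$, which is exactly the $Y^{s,-b}$ control.
\item In the second region, $\langle\tau-\xi^{2j+1}\rangle\sim\langle\xi\rangle^{2j+1}$, and the $X^{s,-b}$ norm suffices because $r_\ell<\frac12$.
\item In the third region, I would change variables $\eta=\xi^{2j+1}$ and use $\int\langle\tau-\eta\rangle^{-2+2b}\,d\eta<\infty$ together with the Jacobian factor $|\xi|^{2\ell-2j}$. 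This matches $r_\ell$, with the endpoint $\ell=j$ done as in Proposition \ref{prop:homogeneous-whole-line}.
\end{itemize}
For $\ell\le j-1$ I would defer to Theorem 3.3 of \cite{Himonas2022} and check only $\ell=j$ explicitly.
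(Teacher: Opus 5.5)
Your architecture is the paper's: a near/far split in $\lambda:=\tau-\xi^{2j+1}$, a Taylor expansion on the near region, a free part plus a remainder on the far region, deferral to \cite{Himonas2022} for $\ell\le j-1$ and for the $X^{s,b,\alpha}$ bound, and an explicit check at $\ell=j$. But one key claim is backwards, and this leaves a genuine gap.

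You say that $\langle\lambda\rangle^{-1+b}$ is not square integrable for $b<\frac12$. The opposite is true: $\int_{\R}\langle\lambda\rangle^{-2+2b}\,d\lambda<\infty$ precisely because $2-2b>1$. You use this very fact yourself in your third trace region. Cauchy--Schwarz in $\tau$ therefore gives $\|h_\infty\|_{H^s}\lesssim_b\|w\|_{X^{s,-b}}$ immediately; this is the paper's bound on $F_1$. Your detour through $Y^{s,-b}$, dyadic pieces and $\mathcal D^{\alpha-1}$ for the free part is unnecessary. The extra norms enter elsewhere: $\mathcal D^{\alpha-1}$ for the $\mathcal D^\alpha$ norm of the remainder (as you note later), and $Y^{s,-b}$ for the remainder traces.

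More seriously, because of this misconception your proposal contains no bound for $\sup_{t\in[0,1]}\|W(t)\|_{H^s}$ on the remainder. $X^{s,b}$ with $b<\frac12$ does not embed into $C_tH^s$, and ``by density'' gives continuity only after a uniform bound is available. The paper gets that bound from the same Cauchy--Schwarz in $\tau$, applied to
$\widehat W(t,\xi)=\int_{\R}\frac{e^{it\tau}-e^{it\xi^{2j+1}}}{i\lambda}\mathcal F(w)(\tau,\xi)\,d\tau$.

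Second, in your trace estimate the region $|\xi|^{2j+1}\gg|\tau|$ is not controlled by $X^{s,-b}$ when $s<0$.

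\begin{itemize}
\item For fixed $\tau$, Cauchy--Schwarz in $\xi$ is sharp. For $\ell=j$ the kernel $|\xi|^{j}\langle\xi\rangle^{-s-(2j+1)(1-b)}$ is square integrable on that region only if $s>-(2j+1)(\frac12-b)$.
\item For $b$ close to $\frac12$ this excludes essentially the whole negative range. That is exactly why $Y^{s,-b}$ is built into $\mathcal Z^{s,-b,\alpha-1}$.
\item In that region $\langle\tau\rangle^{s/(2j+1)}\gtrsim\langle\xi\rangle^{s}$ when $s<0$, so you must use $Y^{s,-b}$ there as well.
\end{itemize}

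Once you do, the three-region split becomes superfluous. The paper bounds the $\ell=j$ remainder trace by a single Cauchy--Schwarz in $\xi$,
\[\langle\tau\rangle^{\frac{2s}{2j+1}}\Big|\int_{\R}e^{ix\xi}\frac{1-\chi(\lambda)}{\lambda}\xi^j\mathcal F(w)\,d\xi\Big|^2\lesssim\langle\tau\rangle^{\frac{2s}{2j+1}}\int_{\R}\frac{|\xi|^{2j}\,d\xi}{\langle\lambda\rangle^{2-2b}}\int_{\R}\frac{|\mathcal F(w)|^2}{\langle\lambda\rangle^{2b}}\,d\xi .\]
The substitution $\eta=\xi^{2j+1}$ turns the middle factor into $\frac{1}{2j+1}\int_{\R}\langle\tau-\eta\rangle^{-2+2b}\,d\eta\lesssim_b1$. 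The Jacobian cancels exactly because $r_j=\frac{s}{2j+1}$.
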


\begin{proof}
The $X^{s,b,\alpha}$ estimate in \eqref{eq:inhomogeneous-whole-line} and the trace estimates in \eqref{eq:inhomogeneous-traces} for $0\le \ell \le j-1$ follow from Theorem 3.3 in \cite{Himonas2022}. It remains to establish the endpoint $\ell=j$ and the stated $C_tH_x^s$ estimate.

Set $m:=2j+1$, and let $\chi\in C_0^\infty(\R)$ satisfy
\[\chi(\lambda)=1\quad\text{for }|\lambda|\le1.\]
The Fourier representation of the Duhamel term gives, up to harmless constants,
\[\psi_1(t)\partial_x^jW=\mathcal A-\mathcal B+\mathcal C,\]
where
\[\mathcal A(t,x):=\psi_1(t)\int_{\R^2}e^{i(x\xi+t\tau)}\frac{1-\chi(\tau-\xi^m)}{\tau-\xi^m}(i\xi)^j\mathcal F(w)(\tau,\xi)\,d\xi\,d\tau,\]
\[\mathcal B(t,x):=\psi_1(t)\int_\R e^{i(x\xi+t\xi^m)}(i\xi)^jF_1(\xi)\,d\xi,\]
with
\[F_1(\xi):=\int_\R\frac{1-\chi(\tau-\xi^m)}{\tau-\xi^m}\mathcal F(w)(\tau,\xi)\,d\tau,\]
and
\[\mathcal C(t,x):=\psi_1(t)\int_{\R^2}e^{i(x\xi+t\xi^m)}\chi(\tau-\xi^m)\frac{e^{it(\tau-\xi^m)}-1}{\tau-\xi^m}(i\xi)^j\mathcal F(w)(\tau,\xi)\,d\xi\,d\tau.\]

We first estimate $\mathcal A$. Since multiplication by $\psi_1$ is bounded on $H^{s/m}(\R)$, it suffices to estimate the expression before multiplication by $\psi_1$. By the Cauchy Schwarz inequality in $\xi$,
\[\begin{aligned}
\langle\tau\rangle^{2s/m}\left|\int_\R e^{ix\xi}\frac{1-\chi(\tau-\xi^m)}{\tau-\xi^m}\xi^j\mathcal F(w)(\tau,\xi)\,d\xi\right|^2\lesssim&~{}\langle\tau\rangle^{2s/m}\left(\int_\R\frac{|\xi|^{2j}}{\langle\tau-\xi^m\rangle^{2-2b}}\,d\xi\right)\\
&~{}\times\left(\int_\R\langle\tau-\xi^m\rangle^{-2b}|\mathcal F(w)(\tau,\xi)|^2\,d\xi\right).
\end{aligned}\]
The change of variables $\eta=\xi^m$ gives
\[\int_\R\frac{|\xi|^{2j}}{\langle\tau-\xi^m\rangle^{2-2b}}\,d\xi=\frac{1}{m}\int_\R\frac{d\eta}{\langle\tau-\eta\rangle^{2-2b}}\lesssim_b1,\]
because $b<\frac12$. Hence
\[\sup_{x\in\R}\|\mathcal A(\cdot,x)\|_{H^{s/m}(\R)}\lesssim_b\|w\|_{Y^{s,-b}(\R^2)}.\]

For $\mathcal B$, Proposition \ref{prop:homogeneous-whole-line} with $\ell =j$ gives
\[\sup_{x\in\R}\|\mathcal B(\cdot,x)\|_{H^{s/m}(\R)}\lesssim_s\|F_1\|_{H^s(\R)}.\]
The Cauchy Schwarz inequality in $\tau$ yields
\[\|F_1\|_{H^s(\R)}^2\lesssim\int_\R\langle\xi\rangle^{2s}\left(\int_\R\frac{|\mathcal F(w)(\tau,\xi)|}{\langle\tau-\xi^m\rangle}\,d\tau\right)^2d\xi \lesssim_b\|w\|_{X^{s,-b}(\R^2)}^2.\]

For $\mathcal C$, use
\[\frac{e^{it\lambda}-1}{\lambda}=\sum_{N=1}^\infty\frac{i^Nt^N\lambda^{N-1}}{N!}.\]
Set
\[\widehat c_N(\xi):=\int_\R\chi(\tau-\xi^m)(\tau-\xi^m)^{N-1}\mathcal F(w)(\tau,\xi)\,d\tau.\]
Then $\mathcal C$ is a convergent sum of terms of the form
\[\frac{t^N\psi_1(t)}{N!}\partial_x^jS(t)c_N.\]
For the fixed exponent $s/m$, multiplication by $t^N\psi_1(t)$ is bounded on $H^{s/m}(\R)$ with operator norm at most $C_s^N$. Proposition \ref{prop:homogeneous-whole-line} with $\ell=j$ gives
\[\sup_{x\in\R}\|\mathcal C(\cdot,x)\|_{H^{s/m}(\R)}\lesssim_s\sum_{N=1}^\infty\frac{C_s^N}{N!}\|c_N\|_{H^s(\R)}.\]
Since $\chi$ is compactly supported,
\[\|c_N\|_{H^s(\R)}\lesssim_bC_\chi^N\|w\|_{X^{s,-b}(\R^2)}.\]
The series converges, and therefore
\[\sup_{x\in\R}\|\mathcal C(\cdot,x)\|_{H^{s/m}(\R)}\lesssim_{s,b}\|w\|_{X^{s,-b}(\R^2)}.\]
Combining the estimates for $\mathcal A$, $\mathcal B$, and $\mathcal C$ proves \eqref{eq:inhomogeneous-traces} for $\ell=j$.

It remains to prove the $C_tH_x^s$ estimate. For $0\le t\le1$,
\[\widehat W(t,\xi)=\int_\R\frac{e^{it\tau}-e^{it\xi^m}}{i(\tau-\xi^m)}\mathcal F(w)(\tau,\xi)\,d\tau.\]
On $|\tau-\xi^m|\le1$, the quotient is uniformly bounded. On $|\tau-\xi^m|>1$, it is bounded by $2|\tau-\xi^m|^{-1}$. The Cauchy Schwarz inequality in $\tau$ gives
\[\sup_{t\in[0,1]}\|W(t)\|_{H^s(\R)}\lesssim_b\|w\|_{X^{s,-b}(\R^2)},\]
because
\[\int_{|\lambda|>1}|\lambda|^{-2}\langle\lambda\rangle^{2b}\,d\lambda<\infty\]
for $b<\frac12$. This completes the proof.
\end{proof}

\begin{proof}[Proof of Proposition \ref{prop:linear-estimates}]
We follow the standard superposition argument used for forced linear IBVPs in \cite{Himonas2021,Himonas2022}. Let $U_0$ be a whole line extension of $u_0$ and let $w$ be a whole line extension of $f$. We use the corresponding whole line solutions $U$ and $W$ defined in Propositions \ref{prop:homogeneous-whole-line} and \ref{prop:inhomogeneous-whole-line}, respectively.
For $1\le\ell\le j$, define
\[\varphi_{\ell,1}:=\varphi_\ell-\left(\partial_x^\ell U(\cdot,0)+\gamma\partial_x^{\ell-1}U(\cdot,0)\right),\]
and
\[W_\ell:=-\left(\partial_x^\ell W(\cdot,0)+\gamma\partial_x^{\ell-1}W(\cdot,0)\right).\]
Let
\[\Phi_{\ell,1}:=\left.\mathcal E_T\varphi_{\ell,1}\right|_{(0,2)},\qquad \mathcal W_\ell:=\left.\mathcal E_TW_\ell\right|_{(0,2)}.\]
By superposition, on $\Omega_T$,
\[S_j[u_0,\bphi,f]=U+S_j[0,(\Phi_{1,1},\ldots,\Phi_{j,1}),0]+W+S_j[0,(\mathcal W_1,\ldots,\mathcal W_j),0].\]

Since
\[r_{\ell-1}=r_\ell+\frac{1}{2j+1},\]
we have $H^{r_{\ell-1}}(0,T)\hookrightarrow H^{r_\ell}(0,T)$. Hence Propositions \ref{prop:homogeneous-whole-line} and \ref{prop:inhomogeneous-whole-line} give
\[\sum_{\ell=1}^j\|\varphi_{\ell,1}\|_{H^{r_\ell}(0,T)}\lesssim\|\bphi\|_{\mathcal H_T^s}+\|U_0\|_{H^s(\R)},\]
and
\[\sum_{\ell=1}^j\|W_\ell\|_{H^{r_\ell}(0,T)}\lesssim\|w\|_{\mathcal Z^{s,-b,\alpha-1}(\R^2)}.\]
Therefore, Proposition \ref{prop:reduced-ibvp}, Lemma \ref{lem:boundary-extension}, and the whole line estimates yield
\[\|S_j[u_0,\bphi,f]\|_{X_{\Omega_T}^{s,b,\alpha}}\lesssim\|U_0\|_{H^s(\R)}+\|\bphi\|_{\mathcal H_T^s}+\|w\|_{\mathcal Z^{s,-b,\alpha-1}(\R^2)}.\]
If $s\ge-j$, the same argument gives
\[\sup_{t\in[0,T]}\|S_j[u_0,\bphi,f](t)\|_{H^s(\R^+)}\lesssim\|U_0\|_{H^s(\R)}+\|\bphi\|_{\mathcal H_T^s}+\|w\|_{\mathcal Z^{s,-b,\alpha-1}(\R^2)}.\]
Taking the infimum over all admissible whole line extensions $U_0$ and $w$ proves \eqref{eq:linear-X} and \eqref{eq:linear-sobolev} for smooth data. The general case follows by density and the corresponding difference estimates. The constants are uniform for $T\in(0,1]$ by Lemma \ref{lem:boundary-extension}.
\end{proof}

\section{Proof of Theorem \ref{mainresult}}\label{sec:well-posedness}
Fix
\[-j+\frac14<s<\frac32.\]
Choose $b_0$, $b$, $\widetilde b$, $b_1$, $\alpha$, and $\widetilde\alpha$ as in Proposition \ref{prop:bilinear}, and set
\begin{equation}\label{eq:theta}
\theta:=\min\{b_1-\widetilde b,\widetilde\alpha-\alpha,b-b_0\}>0.
\end{equation}
The following estimate extracts the positive power of the lifespan required in the contraction argument.
\begin{lemma}\label{lem:restriction-nonlinear}
For every $0<T\le1$,
\begin{equation}\label{eq:restriction-product}
\|\partial_x(uv)\|_{\mathcal Z_{\Omega_T}^{s,-b_1,\alpha-1}} \lesssim T^\theta \|u\|_{X_{\Omega_T}^{s,b,\alpha}}\|v\|_{X_{\Omega_T}^{s,b,\alpha}}.
\end{equation}
Moreover,
\begin{equation}\label{eq:restriction-difference}
\|\partial_x(u^2-v^2)\|_{\mathcal Z_{\Omega_T}^{s,-b_1,\alpha-1}} \lesssim T^\theta \left(\|u\|_{X_{\Omega_T}^{s,b,\alpha}}+\|v\|_{X_{\Omega_T}^{s,b,\alpha}}\right) \|u-v\|_{X_{\Omega_T}^{s,b,\alpha}}.
\end{equation}
\end{lemma}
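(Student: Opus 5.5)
We prove \eqref{eq:restriction-product} by working with extensions. Given $u,v$ in $X_{\Omega_T}^{s,b,\alpha}$, choose whole line extensions $U,V\in X^{s,b,\alpha}(\R^2)$ with norms at most twice the restriction norms. Since $\psi_T=1$ on $[0,T]$, the function $\partial_x(\psi_TU\,\psi_TV)$ is an admissible extension of $\partial_x(uv)$ on $\Omega_T$. The task is then to bound
\[\|\partial_x(\psi_TU\psi_TV)\|_{X^{s,-b_1}}+\|\partial_x(\psi_TU\psi_TV)\|_{\mathcal D^{\alpha-1}}+\|\partial_x(\psi_TU\psi_TV)\|_{Y^{s,-b_1}}\]
by $T^\theta\|U\|\|V\|$. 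For the $Y$ part we use \eqref{eq:bilinear-Y-used} with $F=\psi_TU$ and $G=\psi_TV$. This reduces it to $\|\partial_x(FG)\|_{X^{s,-b_1}}$ plus $\|\psi_TU\|_{X^{s,b_0}}\|\psi_TV\|_{X^{s,b_0}}$.

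The second term is handled by \eqref{eq:time-cutoff-pure-X} with exponents $b_0<b$. Each factor gains $T^{b-b_0}\ge T^\theta$, and the remaining factor is bounded by Lemma \ref{lem:time-cutoff} or by the trivial bound $\|\psi_TU\|_{X^{s,b_0}}\lesssim\|U\|_{X^{s,b}}$, which holds uniformly in $T\le1$ since $|b_0|<\frac12$.

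For the $X^{s,-b_1}$ and $\mathcal D^{\alpha-1}$ parts we place the time cutoff on the output. Write $\partial_x(\psi_TU\psi_TV)=\psi_{T}\,\partial_x(\psi_{T}U\,\psi_{T}V)$, up to replacing one cutoff by $\psi_{2T}$ or similar so that this identity holds exactly. Then apply Lemma \ref{lem:time-cutoff} with $b_0=\widetilde b$ in the role of $b_1$, and with $(-b_1,\alpha-1)$ lowered from $(-\widetilde b,\widetilde\alpha-1)$. Because $-b_1<-\widetilde b$ and $\alpha<\widetilde\alpha$, this gives a factor $T^{\min\{b_1-\widetilde b,\widetilde\alpha-\alpha\}}$ times $\|\partial_x(\psi_TU\psi_TV)\|_{X^{s,-\widetilde b,\widetilde\alpha-1}}$. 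The bilinear estimate \eqref{eq:bilinear-X-used} then bounds this by $\|\psi_TU\|_{X^{s,b,\alpha}}\|\psi_TV\|_{X^{s,b,\alpha}}$.

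The same estimate controls the $X^{s,-b_1}$ term arising inside the $Y$ bound. Taking the infimum over extensions gives \eqref{eq:restriction-product}. For \eqref{eq:restriction-difference} we write $u^2-v^2=(u+v)(u-v)$ and apply \eqref{eq:restriction-product} with bilinearity.

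The main obstacle is the uniform-in-$T$ bound $\|\psi_TU\|_{X^{s,b,\alpha}}\lesssim\|U\|_{X^{s,b,\alpha}}$ for $0<T\le1$. Lemma \ref{lem:time-cutoff} is stated only for the $\alpha-1$ weight, which is negative. For $0<b<\frac12$ the $X^{s,b}$ part is standard. The $\mathcal D^\alpha$ part, with $\alpha>\frac12$, is delicate, because multiplication by $\psi_T$ is not uniformly bounded on $H^\alpha$ in time as $T\to0$: it costs $T^{\frac12-\alpha}$. The first remedy to try is to avoid cutting off the inputs in the $\mathcal D$ component at all. One could instead restrict $\psi_T$ to the output, using a bilinear estimate applied to $U,V$ directly; the output then equals $\partial_x(uv)$ on $\Omega_T$ since $\psi_T=1$ there. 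Alternatively, if inputs must be cut off for the $Y$ estimate, use only the $X^{s,b_0}$ norms there, which do not involve $\alpha$. So the plan is: apply the bilinear estimates to the uncut $U,V$, apply Lemma \ref{lem:time-cutoff} to the output $\psi_T\partial_x(UV)$, and use cutoff inputs only in the $X^{s,b_0}$ term of \eqref{eq:bilinear-Y-used}, where uniform boundedness holds. One must check that \eqref{eq:bilinear-Y-used}, applied to $\psi_TU,\psi_TV$, has first term matching $\psi_T\partial_x(UV)$ locally. This needs care and may require an extra cutoff argument on the $Y$ norm.
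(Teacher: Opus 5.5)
Your handling of the $X^{s,-b_1,\alpha-1}$ component matches the paper's. You take the extension $E:=\psi_T\partial_x(UV)$, use Lemma \ref{lem:time-cutoff} to lower $(-\widetilde b,\widetilde\alpha-1)$ to $(-b_1,\alpha-1)$, and then apply \eqref{eq:bilinear-X-used} to the uncut $U,V$. You are also right that the bilinear estimate cannot be applied to $\psi_TU,\psi_TV$, since multiplication by $\psi_T$ is not uniformly bounded on $\mathcal D^\alpha$ for $\alpha>\frac12$.

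The gap is the $Y^{s,-b_1}$ component, which you leave open, and this is the one step where an idea is needed. The norm on $\mathcal Z_{\Omega_T}^{s,-b_1,\alpha-1}$ is an infimum over extensions of the \emph{sum} $\|\cdot\|_{X^{s,-b_1,\alpha-1}}+\|\cdot\|_{Y^{s,-b_1}}$. So you must bound both norms of one and the same whole-line function. Because the $Y$ norm is global, checking that the first term matches $\psi_T\partial_x(UV)$ ``locally'' on $\Omega_T$ does not help. With $F=\psi_TU$ and $G=\psi_TV$, \eqref{eq:bilinear-Y-used} controls $\psi_T^2\partial_x(UV)$, which is not $E$. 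As written, your plan bounds the $X$ norm of one extension and the $Y$ norm of another.

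The paper's fix is to cut off only one factor. Write $E=\partial_x((\psi_TU)V)$ and apply \eqref{eq:bilinear-Y-used} with $F=\psi_TU$ and $G=V$.
\begin{itemize}
\item The first term on the right of \eqref{eq:bilinear-Y-used} is then exactly $\|E\|_{X^{s,-b_1}}$. You have already bounded this by $T^\theta\|U\|_{X^{s,b,\alpha}}\|V\|_{X^{s,b,\alpha}}$.
\item The second term satisfies $\|\psi_TU\|_{X^{s,b_0}}\|V\|_{X^{s,b_0}}\lesssim T^{b-b_0}\|U\|_{X^{s,b}}\|V\|_{X^{s,b,\alpha}}$ by \eqref{eq:time-cutoff-pure-X}.
\end{itemize}
Only one factor needs to gain a power of $T$, and no uniform $\mathcal D^\alpha$ bound on a cut-off input is ever used.

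Alternatively, you could keep both input cutoffs if you use $\psi_T^2\partial_x(UV)=(\psi^2)(\cdot/2T)\,\partial_x(UV)$ as the extension in \emph{both} components. In that case you apply Lemma \ref{lem:time-cutoff} with the cutoff $\psi^2$, which has the same properties as $\psi$.

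One small slip: since $T\le1$ and $\theta\le b-b_0$, you have $T^{b-b_0}\le T^\theta$, not $\ge$. The inequality $\le$ is the direction you need.
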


\begin{proof}
Choose extensions $\widetilde u,\widetilde v\in X^{s,b,\alpha}(\R^2)$ satisfying
\[\|\widetilde u\|_{X^{s,b,\alpha}(\R^2)} \le2\|u\|_{X_{\Omega_T}^{s,b,\alpha}}, \qquad \|\widetilde v\|_{X^{s,b,\alpha}(\R^2)}\le2\|v\|_{X_{\Omega_T}^{s,b,\alpha}}.\]
Since $\psi_T=1$ on $[0,T]$, the function
\[\psi_T(t)\partial_x(\widetilde u\widetilde v)\]
is an extension of $\partial_x(uv)$ from $\Omega_T$ to $\R^2$.

We first estimate the $X^{s,-b_1,\alpha-1}$ component. By Lemma \ref{lem:time-cutoff} and Proposition \ref{prop:bilinear},
\[\begin{aligned}
\|\psi_T\partial_x(\widetilde u\widetilde v)\|_{X^{s,-b_1,\alpha-1}(\R^2)}\lesssim&~{}T^{\min\{b_1-\widetilde b,\widetilde\alpha-\alpha\}}\|\partial_x(\widetilde u\widetilde v)\|_{X^{s,-\widetilde b,\widetilde\alpha-1}(\R^2)}\\
\lesssim&~{}T^\theta \|\widetilde u\|_{X^{s,b,\alpha}(\R^2)}\|\widetilde v\|_{X^{s,b,\alpha}(\R^2)}.
\end{aligned}\]

For the $Y^{s,-b_1}$ component, since $\psi_T$ depends only on time,
\[\psi_T\partial_x(\widetilde u\widetilde v)=\partial_x((\psi_T\widetilde u)\widetilde v).\]
Using \eqref{eq:bilinear-Y-used},
\[\|\psi_T\partial_x(\widetilde u\widetilde v)\|_{Y^{s,-b_1}(\R^2)}\lesssim\|\psi_T\partial_x(\widetilde u\widetilde v)\|_{X^{s,-b_1}(\R^2)} +\|\psi_T\widetilde u\|_{X^{s,b_0}(\R^2)}\|\widetilde v\|_{X^{s,b_0}(\R^2)}.\]
The first term is controlled by the preceding estimate, while Lemma \ref{lem:time-cutoff} gives
\[\|\psi_T\widetilde u\|_{X^{s,b_0}(\R^2)}\lesssim T^{b-b_0}\|\widetilde u\|_{X^{s,b}(\R^2)}.\]
Also,
\[\|\widetilde v\|_{X^{s,b_0}(\R^2)}\lesssim\|\widetilde v\|_{X^{s,b,\alpha}(\R^2)}.\]
Since $\theta\le b-b_0$, we obtain
\[\|\psi_T\partial_x(\widetilde u\widetilde v)\|_{\mathcal Z^{s,-b_1,\alpha-1}(\R^2)}\lesssim T^\theta \|\widetilde u\|_{X^{s,b,\alpha}(\R^2)}\|\widetilde v\|_{X^{s,b,\alpha}(\R^2)}.\]
Taking the infimum over the extensions proves \eqref{eq:restriction-product}.

Finally, applying the same argument to
\[u^2-v^2=(u-v)(u+v)\]
gives \eqref{eq:restriction-difference}.
\end{proof}

\begin{proof}[Proof of Theorem \ref{mainresult}]
Since $\bphi\in\mathcal H_{\mathrm{loc}}^s$, we have $\bphi|_{(0,1)}\in\mathcal H_1^s$. Set
\[D:=\|u_0\|_{H^s(\R^+)}+\|\bphi\|_{\mathcal H_1^s}.\]
Choose
\[T:=\min\{1,c_0(1+D)^{-1/\theta}\},\]
where $c_0>0$ will be chosen sufficiently small, and use the restriction of $\bphi$ to $(0,T)$ without changing notation. Define
\[\Gamma(u):=S_j\left[u_0,\bphi,-\frac12\partial_x(u^2)\right].\]
Since $s>-j$, Proposition \ref{prop:linear-estimates} with $b_1$ in place of $b$, the embedding
\[X_{\Omega_T}^{s,b_1,\alpha}\hookrightarrow X_{\Omega_T}^{s,b,\alpha},\]
and Lemma \ref{lem:restriction-nonlinear} give
\begin{equation}\label{eq:fixed-map}
\|\Gamma(u)\|_{X_{\Omega_T}^{s,b,\alpha}}+\sup_{t\in[0,T]}\|\Gamma(u)(t)\|_{H^s(\R^+)} \le C\left(D+T^\theta\|u\|_{X_{\Omega_T}^{s,b,\alpha}}^2\right).
\end{equation}
Similarly, if
\[\widetilde\Gamma(v):=S_j\left[\widetilde u_0,\widetilde{\bphi},-\frac12\partial_x(v^2)\right],\]
then
\begin{equation}\label{eq:fixed-difference}
\begin{aligned}
&~{}\|\Gamma(u)-\widetilde\Gamma(v)\|_{X_{\Omega_T}^{s,b,\alpha}}+\sup_{t\in[0,T]}\|\Gamma(u)(t)-\widetilde\Gamma(v)(t)\|_{H^s(\R^+)}\\
\lesssim&~{}\|u_0-\widetilde u_0\|_{H^s(\R^+)}+\|\bphi-\widetilde{\bphi}\|_{\mathcal H_T^s}+T^\theta\left(\|u\|_{X_{\Omega_T}^{s,b,\alpha}}+\|v\|_{X_{\Omega_T}^{s,b,\alpha}}\right)
\|u-v\|_{X_{\Omega_T}^{s,b,\alpha}}.
\end{aligned}
\end{equation}
Let
\[M:=2CD.\]
By choosing $c_0$ sufficiently small, \eqref{eq:fixed-map} and \eqref{eq:fixed-difference} with identical data show that $\Gamma$ maps
\[B_M:=\left\{u\in X_{\Omega_T}^{s,b,\alpha}:\ \|u\|_{X_{\Omega_T}^{s,b,\alpha}}\le M\right\}\]
into itself and is a strict contraction. Hence Banach's fixed point theorem gives a unique fixed point
\[u\in X_{\Omega_T}^{s,b,\alpha}\cap C([0,T];H^s(\R^+)),\]
and \eqref{eq:fixed-map} yields
\[\|u\|_{X_{\Omega_T}^{s,b,\alpha}}+\sup_{t\in[0,T]}\|u(t)\|_{H^s(\R^+)}\lesssim D.\]

For two data sets in a sufficiently small neighborhood of $(u_0,\bphi)$ in $H^s(\R^+)\times\mathcal H_{\mathrm{loc}}^s$, chosen so that their restrictions to $(0,1)$ remain uniformly bounded in $H^s(\R^+)\times\mathcal H_1^s$, the same $T$ may be chosen. Applying \eqref{eq:fixed-difference} to the corresponding fixed points and absorbing the last term gives local Lipschitz dependence on the initial and boundary data.

The contraction argument gives uniqueness in $B_M$. To remove this restriction, let $u_1,u_2\in X_{\Omega_T}^{s,b,\alpha}$ be two UTM solutions with the same data. By applying the local difference estimate on sufficiently short time intervals, one obtains uniqueness successively on $\Omega_T$, following the standard continuation argument used in \cite[Section 4]{Himonas2022-1}.
\end{proof}

\appendix
\section{Special cases}\label{app:special-cases}

In this appendix, we record several reductions of the UTM representation \eqref{eq:utm-full-formula} as mentioned in Remark \ref{rem:intro-related-results}. These reductions recover the classical KdV Robin and Neumann formulas \cite{Himonas2021,Himonas2022-1} and clarify the relation between the present Robin hierarchy and the higher order Dirichlet problem studied in \cite{Himonas2022}.

\subsection{The classical KdV equation with Robin boundary conditions}

Let $j=1$. Then
\[\omega=e^{2\pi i/3},\qquad D^+=D_2^+,\qquad \alpha_{1,1}=\omega,\qquad \alpha_{1,2}=\omega^2.\]
Moreover,
\[Q_1(z)=(z-\omega)(z-\omega^2)=z^2+z+1,\qquad Q_1(1)=3,\]
and
\[Q_1'(\omega)=i\sqrt3,\qquad Q_1'(\omega^2)=-i\sqrt3.\]
A direct computation from \eqref{eq:P-pn} gives
\[\mathcal P_{1,1}(k)=-\omega+i\sqrt3\frac{k}{k-i\gamma},\qquad \mathcal P_{1,2}(k)=-\omega^2-i\sqrt3\frac{k}{k-i\gamma}.\]
Using $1+\omega+\omega^2=0$, these can be rewritten as
\begin{equation}\label{eq:app-j1-P}
-\mathcal P_{1,1}(k)=\frac{k+i(\omega+1)\gamma}{\omega(k-i\gamma)},\qquad -\mathcal P_{1,2}(k)=-\frac{(\omega+1)k+i\gamma}{\omega(k-i\gamma)}.
\end{equation}
For the boundary coefficient, the Euclidean division in \eqref{eq:S-p-ell} gives $S_{1,1}(z)=1$, and hence
\[c_{1,1}=3i.\]
Therefore, the boundary contribution in \eqref{eq:utm-full-formula} becomes
\[\frac{3i}{2\pi}\int_{\partial D^+}e^{ikx+ik^3t}\frac{k^2}{k-i\gamma}\widetilde\varphi_1(T,k^3)\,dk.\]

If $\gamma>0$, then $i\gamma\in D^+$ and the residue contribution reduces to
\begin{equation}\label{eq:app-j1-residue}
\begin{aligned}
\mathcal R_{1,\gamma}[u_0,\varphi_1,f](t,x)=&~{}-\frac{(2+\omega)\gamma}{\omega}\Big[\widehat u_0(i\gamma\omega^2)+F(T,i\gamma\omega^2)-\widehat u_0(i\gamma\omega)-F(T,i\gamma\omega)\Big]e^{-\gamma x+\gamma^3t}\\
&~{}-3\gamma^2\widetilde\varphi_1(T,-i\gamma^3)e^{-\gamma x+\gamma^3t}.
\end{aligned}
\end{equation}
For $\gamma\le0$, no residue term is present. Substituting \eqref{eq:app-j1-P} and \eqref{eq:app-j1-residue} into \eqref{eq:utm-full-formula}, we recover \cite[formula (1.16)]{Himonas2022-1}, with their $\sigma$ identified with $\omega$ and their Fourier variable $\xi$ with $k$.

\subsection{The derivative hierarchy}

Let $\gamma=0$. Then the boundary conditions reduce to
\[\partial_x^\ell u(t,0)=\varphi_\ell(t),\qquad 1\le\ell\le j.\]
Since
\[\frac{Q_p(1)}{Q_p'(\alpha_{p,n})}=(1-\alpha_{p,n})L_{p,n}(1),\]
formula \eqref{eq:P-pn} gives
\begin{equation}\label{eq:app-gamma-zero-P}
\mathcal P_{p,n}(k)=\alpha_{p,n}L_{p,n}(1),
\end{equation}
independently of $k$. In particular,
\[\sum_{n=1}^{j+1}\mathcal P_{p,n}(k)=1,\]
because the polynomial $\sum_{n=1}^{j+1}\alpha_{p,n}L_{p,n}(z)$ interpolates the function $z$ at the $j+1$ nodes $\alpha_{p,1},\ldots,\alpha_{p,j+1}$.

Moreover,
\[\frac{k^{2j+1-\ell}}{k-i\gamma}=k^{2j-\ell},\qquad \mathcal R_{j,0}=0.\]
Thus \eqref{eq:utm-full-formula} becomes
\begin{equation}\label{eq:app-gamma-zero-formula}
\begin{aligned}
u(t,x)=&~{}\frac{1}{2\pi}\int_{\R}e^{ikx+ik^{2j+1}t}\big(\widehat u_0(k)+F(t,k)\big)\,dk\\
&~{}-\frac{1}{2\pi}\sum_{p=1}^j\sum_{n=1}^{j+1}\alpha_{p,n}L_{p,n}(1)\int_{\partial D_{2p}^+}e^{ikx+ik^{2j+1}t}\big(\widehat u_0(\alpha_{p,n}k)+F(T,\alpha_{p,n}k)\big)\,dk\\
&~{}+\frac{1}{2\pi}\sum_{p=1}^j\sum_{\ell=1}^jc_{p,\ell}\int_{\partial D_{2p}^+}e^{ikx+ik^{2j+1}t}k^{2j-\ell}\widetilde\varphi_\ell(T,k^{2j+1})\,dk.
\end{aligned}
\end{equation}
Hence, when $\gamma=0$, the UTM representation does not distinguish between odd and even $j$: the parity distinction in the Robin formula is entirely due to the possible pole $k=i\gamma$.

For $j=1$, \eqref{eq:app-gamma-zero-formula} further reduces to
\[
\begin{aligned}
u(t,x)=&~{}\frac{1}{2\pi}\int_{\R}e^{ikx+ik^3t}\big(\widehat u_0(k)+F(t,k)\big)\,dk\\
&~{}+\frac{1}{2\pi}\int_{\partial D^+}e^{ikx+ik^3t}\Big\{\omega^2\big[\widehat u_0(\omega k)+F(T,\omega k)\big]+\omega\big[\widehat u_0(\omega^2k)+F(T,\omega^2k)\big]\Big\}\,dk\\
&~{}+\frac{3i}{2\pi}\int_{\partial D^+}e^{ikx+ik^3t}k\widetilde\varphi_1(T,k^3)\,dk.
\end{aligned}
\]
This is precisely the Neumann formula obtained from \cite[formula (1.16)]{Himonas2022-1} by setting $\gamma=0$.

\subsection{The formal Dirichlet limit}

To describe the relation with the higher order Dirichlet problem, introduce the reciprocal parameter
\[\varepsilon:=\gamma^{-1}\]
and set
\[h_\ell:=\gamma^{-1}\varphi_\ell,\qquad 1\le\ell\le j.\]
Then the Robin conditions can be written as
\begin{equation}\label{eq:app-dual-bc}
(\varepsilon\partial_x+1)\partial_x^{\ell-1}u(t,0)=h_\ell(t),\qquad 1\le\ell\le j.
\end{equation}
Formally setting $\varepsilon=0$ in \eqref{eq:app-dual-bc} gives
\[\partial_x^{\ell-1}u(t,0)=h_\ell(t),\qquad 1\le\ell\le j,\]
which is exactly the higher order Dirichlet hierarchy considered in \cite{Himonas2022}.

For $\varepsilon\neq0$, \eqref{eq:app-dual-bc} is the Robin hierarchy with Robin parameter $\varepsilon^{-1}$ and boundary data $\varepsilon^{-1}h_\ell$. At the level of the UTM coefficients, for each fixed $k$,
\[\frac{k}{k-i/\varepsilon}=\frac{\varepsilon k}{\varepsilon k-i}\longrightarrow0\]
and
\[\frac{1}{\varepsilon}\frac{k^{2j+1-\ell}}{k-i/\varepsilon}=\frac{k^{2j+1-\ell}}{\varepsilon k-i}\longrightarrow i k^{2j+1-\ell}\]
as $\varepsilon\to0$. Consequently,
\[\mathcal P_{p,n}(k)\longrightarrow L_{p,n}(1).\]
Thus, formally substituting these coefficient limits into the nonresidue part of \eqref{eq:utm-full-formula}, and using the same contour deformation as in the derivation of the UTM representation to replace $F(T,\alpha_{p,n}k)$ by $F(t,\alpha_{p,n}k)$, yields
\begin{equation}\label{eq:app-dirichlet-formula}
\begin{aligned}
u(t,x)=&~{}\frac{1}{2\pi}\int_{\R}e^{ikx+ik^{2j+1}t}\big(\widehat u_0(k)+F(t,k)\big)\,dk\\
&~{}-\frac{1}{2\pi}\sum_{p=1}^j\sum_{n=1}^{j+1}L_{p,n}(1)\int_{\partial D_{2p}^+}e^{ikx+ik^{2j+1}t}\big(\widehat u_0(\alpha_{p,n}k)+F(t,\alpha_{p,n}k)\big)\,dk\\
&~{}+\frac{i}{2\pi}\sum_{p=1}^j\sum_{\ell=1}^jc_{p,\ell}\int_{\partial D_{2p}^+}e^{ikx+ik^{2j+1}t}k^{2j+1-\ell}\widetilde h_\ell(T,k^{2j+1})\,dk.
\end{aligned}
\end{equation}

Formula \eqref{eq:app-dirichlet-formula} has the same form as the higher order Dirichlet UTM representation \cite[formula (1.11)]{Himonas2022}. Indeed, after the index change $\ell'=\ell-1$,
\[(ik)^{2j-\ell'}=i^{2j+1-\ell}k^{2j+1-\ell},\]
and the corresponding coefficients are identified by
\[C_{p,n}=-\frac{L_{p,n}(1)}{2\pi},\qquad C'_{p,\ell-1}=\frac{i^{\ell-2j}}{2\pi}c_{p,\ell}=\frac{(-1)^{j+1}}{2\pi}Q_p(1)S_{p,\ell}(1).\]

The calculation above is only a formal consistency check at the level of the boundary conditions and the UTM coefficients. In particular, we do not pass to the limit in the contour integrals or the residue term, and no convergence of the corresponding Robin solutions to the Dirichlet solution as $|\gamma|\to\infty$ is asserted.

\end{document}